\documentclass[11pt,twoside]{article} 

\usepackage{eqnarray,amsmath}
\usepackage[utf8]{inputenc} 
\usepackage[T1]{fontenc}    
\usepackage{booktabs}       
\usepackage{amsfonts}       
\usepackage{nicefrac}       
\usepackage{microtype}      
\usepackage{xcolor}         
\usepackage{subcaption}
\expandafter\def\csname 
ver@subfig.sty\endcsname{}
\usepackage{eqnarray,amsmath}
\usepackage{epsf}
\usepackage{epsfig}
\usepackage{fancyhdr}
\usepackage{graphics}
\usepackage{graphicx}
\usepackage{psfrag}
\usepackage{fullpage}
\usepackage{pdfpages}
\usepackage{amssymb}
\usepackage{natbib}

\usepackage{amsthm}
\usepackage{comment}

\usepackage{url}

\usepackage{color}

\usepackage{amsthm}
\usepackage{amsmath}
\usepackage{amssymb,bbm}
\usepackage[skip=0pt]{caption}  

\usepackage{textcomp}
\usepackage{siunitx}
\usepackage{wrapfig}
\usepackage{algorithm}

\usepackage{multirow}
\usepackage{multicol}
\usepackage{makecell}
\usepackage{colortbl} 
\usepackage{changepage}

\definecolor{customyellow}{HTML}{fedf8a}
\usepackage{eqnarray,amsmath}
\usepackage{epsf}
\usepackage{epsfig}
\usepackage{fancyhdr}
\usepackage{graphics}
\usepackage{graphicx}
\usepackage{psfrag}
\usepackage{fullpage}
\usepackage{pdfpages}
\usepackage{ragged2e}

\newtheorem*{remark}{Remark}

\newtheorem{assumption}{Assumption}

\newtheorem{lemma}{Lemma}

\newtheorem{theorem}{Theorem}
\newtheorem{proposition}{Proposition}
\newtheorem{definition}{Definition}

\usepackage{eqnarray,amsmath}
\usepackage{booktabs}       
\usepackage{nicefrac}       

\usepackage{subfig}
\usepackage{epsf}
\usepackage{epsfig}
\usepackage{fancyhdr}
\usepackage{graphics}
\usepackage{graphicx}
\usepackage{epstopdf}
\usepackage{psfrag}
\usepackage{fullpage}
\usepackage{pdfpages}

\usepackage{enumerate}   
\usepackage{multirow}
\usepackage{bm}

\usepackage{mathtools}

\usepackage{url}
\usepackage[colorlinks,linkcolor=black,citecolor=blue, pagebackref=true]{hyperref}
\renewcommand*{\backrefalt}[4]{%
    \ifcase #1 \footnotesize{(Not cited.)}%
    \or        \footnotesize{(Cited on page~#2.)}%
    \else      \footnotesize{(Cited on pages~#2.)}%
    \fi}

\usepackage{color}

\usepackage{amsthm}
\usepackage{amsmath}
\usepackage{amssymb,bbm}
\usepackage{caption}
\usepackage{algorithmic}
\usepackage{algorithm}
\usepackage{textcomp}
\usepackage{siunitx}
\usepackage{wrapfig}
\usepackage{algorithmic}
\usepackage{algorithm}
\usepackage{multirow}
\usepackage{multicol}
\usepackage{mathtools}

\def\1{\bm{1}}

\DeclareMathAlphabet{\mathsfit}{\encodingdefault}{\sfdefault}{m}{sl}
\SetMathAlphabet{\mathsfit}{bold}{\encodingdefault}{\sfdefault}{bx}{n}

\usepackage{eqnarray,amsmath}

\usepackage{amsthm}
\usepackage{amsmath}
\usepackage{amssymb,bbm}

\allowdisplaybreaks

\begin{document}

\begin{center}

{\bf{\LARGE{Partial Differential Equation Barriers to Identifiability in Infinite Mixture Models
}}}
  
\vspace*{.2in}
{\large{
\begin{tabular}{cccc}
Dung Le\footnotemark & Nicola Bariletto\footnotemark[1] & Alessandro Rinaldo & Nhat Ho
\end{tabular}
}}

\footnotetext{Equal contribution.}

\vspace*{.2in}

\begin{tabular}{c}
The University of Texas at Austin
\end{tabular}

\vspace*{.2in}
\today


\begin{abstract}
We study identifiability of mixing measures in infinite mixture
models. We show that, in many common cases, lack of identifiability can be characterized in terms of certain differential structures of the kernel family with respect to its parameters. In our main results, we prove that when the
kernel is annihilated by a non-trivial differential or difference-differential operator over the
parameter space, there exist infinitely many distinct mixing measures yielding the same mixture density.
We give verifiable conditions for such operators to exist, covering many common cases, including the location-scale Gaussian, location-scale Student-$t$, Gamma, Beta, Dirichlet,
negative binomial and non-central Chi-squared families. Furthermore, our conditions apply to any exponential family whose parameter dimension exceeds the
dimension of its sufficient statistic and, more generally, to kernels with polynomially-growing score functions. We complement our results with a minimax lower bound on the estimation error for the mixing measure in the Wasserstein distance under non-identifiability. On the flips side, we describe three classes of kernels for which identifiability is preserved and nonparametric statistical inference remains possible.
\end{abstract}
\end{center}





\section{Introduction}\label{sec:introduction}

Mixtures are among the most widely used models in statistics, offering a
principled way to describe populations composed of unobserved subgroups and to
build flexible distributions from simple parametric components
\citep{mclachlan2000finite,fraley2002model}. In their basic form, data
$X_1,\ldots,X_n$ taking values in a sample space $\mathcal X\subseteq\mathbb
R^{d_1}$ are assumed to be independent draws from a density
\begin{equation}\label{eq:model}
    p_{G_*}(x)=\int_{\Theta}f(x\mid\theta)\,G_*(d\theta), \qquad x \in \mathcal{X},
\end{equation}
where $\{f(\cdot\mid\theta):\theta\in\Theta\}$ is a family of probability
density kernels (with respect to some common dominating measure on $\mathcal{X}$, say $\lambda$) indexed by the parameter space $\Theta\subseteq\mathbb
R^{d_2}$, and the \emph{mixing measure} $G_*$ is a probability measure on
$\Theta$ that weights the parameter values entering the mixture. Equivalently,
the generative model implied by~\eqref{eq:model} admits a hierarchical representation: latent parameters
$\theta_1,\ldots,\theta_n$ are drawn independently from $G_*$, and each $X_i$ is
then drawn independently from $f(\cdot\mid\theta_i)$. For the purposes of
statistical inference, two related objects are of interest: the density
$p_{G_*}$ and the mixing measure $G_*$. The latter, whose estimation motivates
the present work, encodes the latent structure underlying
tasks such as model-based clustering and the recovery of subgroup parameters.

In nonparametric settings, the unknown $G_*$ is left unrestricted (or nearly so) and is modelled
through mixing measures $G$ with infinite support, which we call \emph{infinite
mixing measures}, giving rise to \emph{infinite mixture models}. This flexible
formulation is standard in Bayesian nonparametrics, through priors on $G$ such
as the Dirichlet process \citep{Ferguson,ferguson1983bayesian,lo1984class}
that model $G_*$ as discrete and induce a latent clustering of the data
according to the mixture component imputed to have generated each observation.
While $G_*$ governs this rich latent structure, the flexibility of the scheme
comes at the cost that $G_*$ is considerably harder to estimate than $p_{G_*}$. This is because, as the
hierarchical form makes explicit, the data are directly informative about $p_{G_*}$, whereas information about $G_*$ is only indirectly available through the additional noise layer
contributed by the kernel $f(\cdot\mid\theta)$. This effectively makes $p_{G_*}$ a noisy
version of $G_*$, so that the statistical recovery of $G_*$ may be framed as the problem of denoising $p_{G_*}$, or an estimate thereof, to extract the latent signal $G_*$. In light of this interpretation, the first fundamental question to address, for the task at hand to be at least well-posed, is about \emph{identifiability}: even granting perfect estimation of the data-generating density---that is, $p_{\hat G}=p_{G_*}$ for some estimator $\hat G$ of $G_*$---does recovery of
the mixing measure follow---that is, $\hat G=G_*$?

\subsection{Contributions of the article}\label{sec:contributions}

This work focuses precisely on the above-mentioned question of identifiability for infinite mixture models. To begin our discussion, write $\mathcal A$ for the linear operator $G\mapsto\int_\Theta f(\cdot\mid\theta)G(d\theta)$ that maps mixing measures to mixture densities. Using this formalism, the mixing measure is identifiable based on the mixture density whenever $\mathcal A$ is injective. In light of this, the key questions we address are: for which kernels $f$ does injectivity of $\mathcal A$ hold? And when this property does not hold, what are the structural mechanisms responsible for the failure of identifiability?

Our starting point is the observation that many standard kernels belong to the null space of, or equivalently are \emph{annihilated} by, a differential operator acting on the parameter $\theta$. To build intuition, fix a probability measure $G_*$ admitting a density $g_*$ that is bounded away from zero on an open ball $B\subseteq \Theta^\circ$, where $\Theta^\circ$ denotes the interior of $\Theta$, and suppose that there exists a non-trivial linear differential operator $\mathcal L_\theta$ such that
\begin{equation}\label{eq:pde_barrier}
    \mathcal L_\theta f(x\mid\theta)=0\qquad\text{for } \lambda\text{-almost every  }x\in\mathcal X\text{ and every }\theta\in\Theta^\circ.
\end{equation}
We refer to \eqref{eq:pde_barrier} as a \emph{parameter partial differential equation}, or \emph{parameter PDE}. For ease of exposition, we postpone the presentation of more general differential structures and of concrete examples. For the moment, let $\mathcal L_\theta^*$ denote the formal adjoint of $\mathcal L_\theta$ \citep{folland1999real}, the latter also being a differential operator that can be applied to any infinitely differentiable and compactly supported function $u:\mathbb R^{d_2}\to\mathbb R$. Hence, letting $g:=\mathcal L^*_\theta u$ for an appropriately chosen $u$, the properties of adjoint operators and the parameter PDE lead to
\begin{equation}\label{eq:duality}
    \int_\Theta f(x\mid\theta)g(\theta)\,d\theta=\int_\Theta u(\theta)\mathcal L_\theta f(x\mid\theta)\,d\theta=0\qquad \text{for }\lambda \text{-almost every } x\in\mathcal X.
\end{equation}
Integrating \eqref{eq:duality} over $x$ and using Fubini's theorem also gives $\int_\Theta g(\theta)\,d\theta=0$. Therefore, defining $G(d\theta):=g(\theta)\,d\theta$ yields a signed measure such that $\mathcal AG=0$. This, together with other properties of $u$, implies that the perturbed probability measure $G_*+\varepsilon G$, for $\varepsilon>0$ small enough, is different from $G_*$ while giving rise to the same mixture density: $p_{G_*}= p_{G_*+\varepsilon G}$ $\lambda$-almost everywhere.

This simple line of reasoning sheds new light on a direct connection between the presence of parameter PDEs and the non-identifiability of mixing measures in infinite mixture models. Importantly, the existence of parameter PDEs as in \eqref{eq:pde_barrier} is not a mere mathematical artifact, but will be shown to rather be a pervasive feature of commonly used kernel families. While a comprehensive set of such families will be discussed later, it is useful to exemplify the discussion by means of a concrete and popular case, namely, \emph{location-scale Gaussian mixtures}. With $d_1=1$ and $d_2=2$, let $\mathcal N(x\mid\mu,\nu):=(2\pi\nu)^{-1/2}\exp\{-(x-\mu)^2/(2\nu)\}$ denote the univariate Gaussian density with mean $\mu$ and variance $\nu$, with kernel $f(x\mid\mu,\nu)=\mathcal N(x\mid\mu,\nu)$ indexed by $\theta:=(\mu,\nu)\in\Theta:=[-a,a]\times[\nu_{\min},\nu_{\max}] \subset \mathbb{R}^2$, for $a>0$ and $0<\nu_{\min}<\nu_{\max}<\infty$. Denoting by $\partial_y$ the partial derivative operator with respect to the scalar argument $y$, the Gaussian kernel famously solves the heat equation
\begin{equation}
\label{eq:heat}
    \partial_\nu f(x\mid\mu,\nu)=\tfrac{1}{2}\partial^2_\mu f(x\mid\mu,\nu)\qquad \text{for every }x\in\mathbb R \text{ and } (\mu,\nu)\in\Theta^\circ,
\end{equation}
which is constitutes a parameter PDE in the sense of \eqref{eq:pde_barrier}, with $\mathcal L_\theta=\partial_\nu-\frac12\partial^2_\mu$. Hence, fixing an appropriate infinitely differentiable $u:\mathbb R^2\to\mathbb R$ and recognizing that $\mathcal L_\theta^*=-\partial_\nu u(\theta,\nu)-\tfrac12\partial_\theta^2u(\theta,\nu)$, the same line of reasoning as in the previous paragraph implies that infinite location-scale Gaussian mixtures suffers from non-identifiability of the mixing measure.

Beyond these heuristic preliminaries, the contributions of the article can be summarized as follows. First, we formally show that the arguments outlined above hold in very general settings. In particular, Theorem~\ref{thm:homogeneous} states that a kernel annihilated by a non-trivial differential operator generates a non-identifiable mixture model whenever the mixing measure has a density that is locally bounded away from zero. Theorem~\ref{thm:shift} extends this conclusion to difference-differential operators, in which the kernel is evaluated at shifted parameter values. Second, we give verifiable conditions under which such operators exist. Proposition~\ref{prop:exp_family_pde} shows that any exponential family whose parameter dimension exceeds the dimension of its sufficient statistic admits a parameter PDE, while Proposition~\ref{prop:general_pde} generalizes this results also to kernels whose score is polynomial in a finite collection of statistics. Together with the examples of Section~\ref{sec:pde_examples}, these results identify common barriers to identifiability, which fails across the location-scale Gaussian, location-scale Student-$t$, gamma, beta, Dirichlet, negative binomial and non-central chi-squared families, and they indicate that over-parameterisation, in the sense $d_2>q$ with $q$ denoting the dimension of the sufficient statistic, is a systematic mechanism behind such barriers. As we document in Proposition~\ref{pro:statistical_estimation}, these instances of non-identifiability have crucial implications for statistical estimation of the mixing measure, which becomes impossible without further constraints on the model or $G_*$.

Third, we describe three classes of kernels for which no such barrier exists and identifiability is ensured. Proposition~\ref{prop:translation} introduces generalized translation families, for which identifiability reduces to the non-vanishing of the Fourier transform of a base kernel. Proposition~\ref{prop:exp_family_identif} treats exponential families in which the sufficient statistic has dimension at least that of the parameter and its range is rich enough to determine measures on the natural parameter space, a condition of Müntz--Szász type. Proposition~\ref{prop:super_exponential} considers a third class of kernels that both preserves identifiability and verifies the absence of any parameter PDE condition.

The remainder of the article is organised as follows. Section~\ref{sec:notation} fixes notation and preliminary concepts. Section~\ref{sec:literature} reviews the existing literature on identifiability and estimation in mixture models. Section~\ref{sec:pde_barriers} develops the theory of PDE barriers to identifiability, presents concrete examples, and discusses the consequences of non-identifiability on statistical estimation of the mixing measure. Section~\ref{sec:identifiable} treats kernel families for which identifiability is preserved. Section~\ref{sec:discussion} concludes. The proofs of the results presented in the article are collected in the Supplementary Material.

\section{Notation and preliminary concepts}\label{sec:notation}

We assume throughout that the sample space $\mathcal X$ is a Borel-measurable subset of $\mathbb R^{d_1}$ over which a $\sigma$-finite Borel measure $\lambda$ is well defined; this is understood to be the Lebesgue measure for continuous kernels and the counting measure for discrete ones. Analogously, the parameter space $\Theta\subset\mathbb R^{d_2}$ is assumed to be regular compact \footnote{A subset $S$ of Euclidean space $\mathbb{R}^d$ is call \textit{regular closed set} if $S$ equals to the closure of its interior (\cite{willard1970general}). } with a non-empty interior $\Theta^\circ$, and is equipped with the restriction of  the Lebesgue measure in $\mathbb R^{d_2}$. For $k\in\mathbb N$, $k$-fold product spaces are denoted as $\mathcal X^k$ and $\Theta^k$, and are endowed (when meaningful) with the product topology and the corresponding Borel $\sigma$-algebra. Similarly, $P^k$, where $P$ is a probability measure on some measurable space such as $\mathcal X$ or $\Theta$, denotes the $k$-fold product measure on the corresponding product space. A kernel $f:\mathcal X\times\Theta\to[0,\infty)$ is taken to be a jointly measurable function satisfying $\int_{\mathcal X}f(x\mid\theta)\,\lambda(dx)=1$ for every $\theta\in\Theta$, so that $x\mapsto f(x\mid \theta)$ is a probability density function with respect to $\lambda$.

We write $\mathcal M(\Theta)$ for the Banach space of finite signed Radon measures on $\Theta$ endowed with the total variation norm $\|\cdot\|_{\mathrm{TV}}$,  $\mathcal P(\Theta)\subset\mathcal M(\Theta)$ for the set of Borel probability measures on $\Theta$, and $\mathcal D(\Theta)\subset \mathcal P(\Theta)$ for the set of discrete probability measures (i.e., those with at most countable support). The mixture operator $\mathcal A:\mathcal M(\Theta)\to L^1(\mathcal X,\lambda)$ is defined by
\begin{equation*}
 G \mapsto (\mathcal AG)(x):=\int_\Theta f(x\mid\theta)\,G(d\theta),\qquad x\in\mathcal X,
\end{equation*}
and we write $p_G:=\mathcal AG$ whenever $G\in\mathcal P(\Theta)$, so that $p_G$ is a probability density with respect to $\lambda$.

The mixture model \eqref{eq:model} is said \emph{identifiable} if, for all $G_1,G_2\in\mathcal P(\Theta)$, the equality $p_{G_1}=p_{G_2}$, holding $\lambda$-almost everywhere, implies $G_1=G_2$. By linearity of the mixture operator,  identifiability is implied by the injectivity of $\mathcal A$ on $\mathcal M(\Theta)$; that is, the model is identifiable if
\begin{equation*}
    \mathcal AG=0 \;\lambda\text{-almost everywhere} \quad\implies\quad G=0
\end{equation*}
for all $G\in\mathcal M(\Theta)$, where $G = 0$ indicates that $G$ is the identically zero signed measure . Also, following the seminal contribution of \cite{Nguyen-13} we will, when discussing statistical estimation, measure the discrepancy between any two mixing measures $G_1,G_2\in\mathcal P(\Theta)$ according to the \emph{Wasserstein distance of order 1},
\begin{equation*}
    W_1(G_1,G_2):=\inf_{\pi\in\Pi(G_1,G_2)} \int_\Theta \|\theta_1-\theta_2\|\,\pi(d\theta_1,d\theta_2),
\end{equation*}
where $\Pi(G_1,G_2)$ is the set of probability measures $\pi$ on $\Theta^2$ with left- and right-marginal $G_1$ and $G_2$, respectively \citep{Villani-09}.


For a multi-index $\alpha=(\alpha_1,\ldots,\alpha_{r})\in\mathbb N_0^{r}$, where $\mathbb N_0:=\{0\}\cup\mathbb N$, we set $|\alpha|:=\alpha_1+\cdots+\alpha_{r}$ and write $\partial^\alpha_\theta:=\partial^{|\alpha|}/\partial\theta_1^{\alpha_1}\cdots\partial\theta_{r}^{\alpha_{r}}$, with the convention that $\partial^\alpha_\theta$ is the identity when $|\alpha|=0$. To ease notation, we will often omit $\alpha \in \mathbb N^{r}$ from subscripts and set definitions, and will instead directly state the required conditions on $\alpha$ (e.g., in terms of its size $|\alpha|$). For a non-empty open set $U\subseteq\mathbb R^{d_2}$ and $k\in\mathbb N_0\cup\{\infty\}$, $C^k(U)$ denotes the space of $k$ times continuously differentiable real functions on $U$, and $C_c^\infty(U)$ the space of infinitely differentiable functions whose support is a compact subset of $U$; elements of $C_c^\infty(U)$ are referred to as test functions. For real functions $w_1,w_2$ on $\Theta$ with $w_1w_2$ integrable, we write $\langle w_1,w_2\rangle:=\int_\Theta w_1(\theta)w_2(\theta)\,d\theta$ for the $L^2(\mathcal X)$ inner product with respect to the Lebesgue measure on $\Theta$. We denote by $B(\theta_0,\rho)$ the open Euclidean ball of centre $\theta_0$ and radius $\rho$, by $\bar A$ the closure of a set $A$, and by $e_1,\ldots,e_{d}$ the standard basis of $\mathbb R^{d}$. For $h\in L^1(\mathbb R^{d})$, the Fourier transform of $h$ is
\begin{equation*}
    \mathcal F[h](\xi):=\int_{\mathbb R^{d}}e^{-i\xi^\top x}h(x)\,dx,\qquad \xi\in\mathbb R^{d}.
\end{equation*}
Analogously, for a finite measure $\mu$ on $\mathbb{R}^d$, its characteristic function is denoted with
\[
 \mathcal F[\mu](\xi):=\int_{\mathbb R^{d}}e^{ i\xi^\top x}d\mu(x),\qquad \xi\in\mathbb R^{d}.
\]
Finally, we call a map between metric spaces a homeomorphism onto its image if it is a continuous bijection onto its image with continuous inverse. We refer to \cite{folland1999real} for further background on the previous and other functional-analytic tools employed throughout the article.
\section{Relation to existing work}\label{sec:literature}
Identifiability and estimation in mixture models have a long history, going back
to the work of
\cite{teicher1960mixture,teicher1961identifiability,teicher1963finite}. Much of
the recent literature has dealt with estimation of the density $p_{G_*}$,
with \cite{Ghosal-1999,Barron-Shervish-Wasserman-99,Ghosal-2000,Ghosal-2001}
pioneering the study of $L^1$ consistency and contraction rates in the
infinite-dimensional setting. The problem of quantifying convergence at the
level of the mixing measure has been recently formalised by \cite{Nguyen-13}, who
advocated for the Wasserstein distances as natural tools toward this aim on
account of their geometrically meaningful behaviour between measures with
arbitrary supports. This perspective has generated a growing body of work on estimation of the mixing measure
\citep{Ho-Nguyen-Ann-16,Ho-Nguyen-EJS-16,Gao2016Posterior,scricciolo2018bayes,rousseau2024wasserstein,bariletto2026convergence,ascolani2026posterior}.
A central device in these analyses is the search for inequalities of the form
$\int_\mathcal X |p_{G_1}(x)-p_{G_2}(x)|\,\lambda(dx)\ge \varphi(W_r(G_1,G_2))$,
for some increasing $\varphi$ and a Wasserstein distance $W_r$ of order $r\geq 1$ \citep[see][for a definition when $r>1$]{Villani-09}, which convert density rates into mixing-measure rates
\citep{nguyen2026optimal}. Such an inequality is possible in general only when
$\mathcal A$ is injective, so that establishing it may be viewed as a
quantitative refinement of identifiability itself; the attainable $\varphi$, and hence the
estimation rates, have in turn been shown to be governed by analytic features of
the kernel, notably the decay of its characteristic function and the smoothness
of the parameterisation.

Work addressing identifiability directly, rather than through estimation rates,
has largely been confined to finite mixing measures. In this setting,
\cite{Ho-Nguyen-Ann-16,Ho-Nguyen-EJS-16,manole2022refined} showed that the
identifiability and convergence behaviour of the mixing measure is controlled by some algebraic
relations among the parameter derivatives of the kernel: when certain families
of such derivatives of $f$ are linearly dependent, distinct configurations
of atoms become difficult or impossible to separate, degrading identifiability
and estimation rates; see also \cite{nguyen2026optimal} for a recent review and
\cite{ho2022convergence,nguyen2023demystifying,do2025strong,bariletto2026bayesian} for
extensions to the covariate-dependent case. Differential structure in the
parameter has thus already been recognised as a key mechanism in the finite
mixture setting, and the aim of this article is to develop these connections in
the infinite-dimensional setting as well. It is notable that, in part also owing to applications in deconvolution
problems \citep{Caillerie-etal-11,dedecker2013minimax,dedecker2015improved}
where the noise kernel $f(\cdot\mid\theta)$ is typically well understood and need not be very flexible, most of
the literature on infinite mixing-measure estimation has focused on relatively
simple kernel structures, such as location families or, only recently,
location-scale families with a scale parameter shared across components but estimated from the data
\citep{bariletto2026convergence}. As our analysis reveals, the scarcity of results
on more general and widely used families, such as location-scale ones, is not accidental; rather, many of these flexible parameterisations give rise to differential structures that are
incompatible with an identifiable model, rendering the recovery of $G_*$ from a
density estimate $p_{\hat G}$ impossible in general.

\section{PDE barriers to mixing-measure identifiability}\label{sec:pde_barriers}

We are now in a position to formally initiate our study of identifiability of mixing measures in the context of infinite mixture models. In this section, we examine two types of PDE barriers to identifiability and provide examples that illustrate both.

\subsection{Non-identifiability via parameter PDEs}\label{sec:homogeneous}

The first type of PDE barrier to identifiability we examine is induced by the following kind of differential operator.

\begin{definition}\label{def:operator}
Let $U\subseteq\Theta^\circ$ be open and $r\in\mathbb N_0$. A \emph{parameter differential operator} on $U$ of order $r$ is a map $\mathcal L_\theta$ acting on functions $w\in C^r(U)$ by the relation
\begin{equation*}
    \mathcal L_\theta w=\sum_{|\alpha|\le r}c_\alpha(\theta)\,\partial^\alpha_\theta w, \qquad \theta\in U,
\end{equation*}
with coefficients $c_\alpha\in C^{|\alpha|}(U)$. Its formal adjoint is $\mathcal L^*_\theta w=\sum_{|\alpha|\le r}(-1)^{|\alpha|}\partial^\alpha_\theta(c_\alpha w)$, so that $\langle\mathcal L_\theta w_1,w_2\rangle=\langle w_1,\mathcal L^*_\theta w_2\rangle$ for all $w_1\in C^r(U)$ and $w_2\in C_c^\infty(U)$. The operator is \emph{non-trivial} on a set $A\subseteq U$ if $c_\alpha$ is not identically zero on $A$ for at least one $\alpha$ with $|\alpha|\ge1$.
\end{definition}

Accordingly, we say that the kernel $f$ satisfies a \emph{parameter PDE} on $U$ if $\mathcal L_\theta f(x\mid\theta)=0$ for $\lambda$-almost every $x\in\mathcal X$ and every $\theta\in U$, for some parameter differential operator $\mathcal L_\theta$ on $U$. As anticipated, the presence of such parameter PDE relations leads to non-identifiability of the mixing measure.

\begin{theorem}\label{thm:homogeneous}
Let $U\subseteq\Theta^\circ$ be open and let $f$ satisfy a parameter PDE on $U$ for a parameter differential operator $\mathcal L_\theta$ that is non-trivial on an open ball $B$ with $\bar B\subset U$. Let $G_*\in\mathcal P(\Theta)$ admit a density $g_*$ with respect to the Lebesgue measure on $\Theta$ such that $g_*\ge m$ Lebesgue-almost everywhere on $B$, for some $m>0$. Then there exists $G\in\mathcal P(\Theta)$ with $G\neq G_*$ and $p_G=p_{G_*}$ $\lambda$-almost everywhere.
\end{theorem}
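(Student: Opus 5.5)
The plan is to make the heuristic from the introduction rigorous. We construct a test function $u\in C_c^\infty(B)$ such that $g:=\mathcal L^*_\theta u$ is not identically zero, and then set $G:=G_*+\varepsilon\, g(\theta)\,d\theta$ for small $\varepsilon>0$. The argument has four steps: show that $\mathcal A(g\,d\theta)=0$; show that $\int g=0$, so that $G$ has total mass one; use $g_*\ge m$ on $B$ to keep $G$ nonnegative; and show that $g\not\equiv0$, so that $G\ne G_*$.

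\textbf{Step 1 (annihilation).} Fix $u\in C_c^\infty(B)$. The function $g=\mathcal L^*_\theta u=\sum_{|\alpha|\le r}(-1)^{|\alpha|}\partial^\alpha(c_\alpha u)$ is defined on $U$ and supported in $\mathrm{supp}\,u\subset B$. Since $c_\alpha\in C^{|\alpha|}(U)$, $g$ is continuous on the compact set $\bar B$, hence bounded. For $\lambda$-a.e.\ $x$, the map $\theta\mapsto f(x\mid\theta)$ belongs to $C^r(U)$; this is implicit in the requirement that $\mathcal L_\theta f(x\mid\cdot)$ make sense pointwise on $U$. The defining adjoint identity then gives
\[
\int_\Theta f(x\mid\theta)g(\theta)\,d\theta=\langle f(x\mid\cdot),\mathcal L^*_\theta u\rangle=\langle \mathcal L_\theta f(x\mid\cdot),u\rangle=0 .
\]
One technical point needs care: the parameter PDE holds for every $\theta\in U$ only off an $x$-null set that might depend on $\theta$. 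I would handle this by noting that for fixed $x$ the function $\mathcal L_\theta f(x\mid\theta)$ is continuous in $\theta$. It therefore suffices to have the PDE on a countable dense set of $\theta$'s, where a single union of null sets works. Alternatively, one can apply Fubini to $\int_{\mathcal X}\int_B |\mathcal L_\theta f|\,d\theta\,d\lambda=0$.

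\textbf{Step 2 (mass zero).} Since $f\ge0$, $\int_{\mathcal X} f(x\mid\theta)\,\lambda(dx)=1$ and $g$ is bounded with compact support, the function $f(x\mid\theta)|g(\theta)|$ is jointly integrable. Integrating the identity of Step 1 over $x$ and applying Fubini yields $\int_\Theta g=0$.

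\textbf{Step 3 (positivity).} Take $\varepsilon:=m/(2\|g\|_\infty)$. On $B$ the density of $G$ is $g_*+\varepsilon g\ge m/2>0$, and off $B$ it equals $g_*\ge0$. Together with Step 2, this makes $G$ a probability measure. By Step 1 and linearity of $\mathcal A$, $p_G=p_{G_*}$ $\lambda$-a.e.

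\textbf{Step 4 (non-degeneracy).} This is the main obstacle: we must produce $u\in C_c^\infty(B)$ with $\mathcal L^*_\theta u\not\equiv0$. Non-triviality provides some $\alpha$ with $|\alpha|\ge1$ and $c_\alpha\not\equiv0$ on $B$. Let $s\ge1$ be the largest order such that some $c_\alpha$ with $|\alpha|=s$ is nonzero somewhere in $B$. By continuity, that coefficient is nonzero on a small ball $B'\subset B$, and the principal part of $\mathcal L^*$ on $B'$ is $(-1)^s\sum_{|\alpha|=s}c_\alpha\partial^\alpha$. I would argue by contradiction. Suppose $\mathcal L^*u=0$ for all $u\in C_c^\infty(B')$. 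Then, dually, for every distribution (or smooth function) $v$ on $B'$ we would have $\mathcal L v=0$ in the weak sense. Testing with exponentials $v=e^{t\,\xi\cdot\theta}$ makes $\mathcal L v\, e^{-t\xi\cdot\theta}$ a polynomial in $t$ whose leading coefficient is $t^s\sum_{|\alpha|=s}c_\alpha(\theta)\xi^\alpha$. This polynomial in $\xi$ is not identically zero at a point where some $c_\alpha\ne0$, which is a contradiction. A more hands-on alternative uses $u(\theta)=\phi(\theta)e^{it\xi\cdot\theta}$ (taking real or imaginary parts) with a bump $\phi$ and $t\to\infty$: the top-order term dominates, of size $t^s|\sum_{|\alpha|=s} c_\alpha\xi^\alpha|$, against lower-order terms of size $O(t^{s-1})$. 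Any choice of $\xi$ with $\sum_{|\alpha|=s}c_\alpha(\theta_0)\xi^\alpha\ne0$ at some point $\theta_0\in B'$ then gives $g\not\equiv0$. The dual route needs only $\mathcal L v=\sum c_\alpha\partial^\alpha v$ classically for smooth $v$, together with the identity $\langle \mathcal L v,u\rangle=\langle v,\mathcal L^*u\rangle$; this makes it the cleanest option, so I would try it first.
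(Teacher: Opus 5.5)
Your proposal is correct and follows essentially the same route as the paper. You take $\bar h=\mathcal L^*_\theta u$ for a test function $u\in C_c^\infty(B)$, use the adjoint identity and Fubini to get $\mathcal A(\bar h\,d\theta)=0$ and $\int\bar h=0$, perturb $g_*$ by $\varepsilon\bar h$ with $\varepsilon=m/(2\|\bar h\|_\infty)$, and obtain $\bar h\not\equiv0$ by the same duality-by-contradiction argument. The differences are minor. The paper tests $\mathcal L_\theta$ against the localized monomial $\varphi(\theta)(\theta-\theta_*)^\beta/\beta!$ rather than exponentials, and it does not address the $\theta$-dependent $\lambda$-null set, which your continuity plus countable-dense-set remark handles cleanly.
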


The proof of Theorem~\ref{thm:homogeneous} details the construction of the probability density $p_G$ obtained by perturbing the mixing measure $g_*$ by an additive term of the form $\epsilon \overline{h}$, where $\overline{h}$ is a function on $\Theta$ such that $\int_\Theta \overline{h}(\theta) d \theta = 0$ and $\epsilon > 0$ is appropriately small. It is worth emphasizing that, since $\overline{h}$ and $\epsilon$ depends only on $B$ and $m$, this construction is generic: there is a continuum of choices of $\overline{h}$ and $\epsilon$, each leading to a mixing measure that is indistinguishable,  based only on the induced mixture density, from $G_*$. Secondly, the conclusion is local in the parameter space: only the behavior of $\mathcal L_\theta$ and of $g_*$ on the ball $B$ matters, so that a kernel satisfying a parameter PDE on an arbitrarily small open region generates non-identifiability for every mixing measure whose density is bounded away from zero there. In particular, if $g_*$ is continuous and bounded away from $0$ on $\Theta^\circ$, the assumptions of the Theorem reduce to the non-triviality of $\mathcal L_\theta$.


Theorem~\ref{thm:homogeneous} links non-identifiability to the existence of an annihilating differential operator. The next two results give sufficient conditions for this kind of PDE barriers to identifiability to exist, the first for exponential families and the second for a broader class in which the score function is a polynomial in a finite collection of statistics.

\begin{definition}\label{def:exp_family}
The kernel $f$ belongs to an \emph{exponential family} with sufficient statistic of dimension $q\in\mathbb N$ if
\begin{equation}\label{eq:exp_family}
    f(x\mid\theta)=h(x)\exp\left\{\sum_{i=1}^q\eta_i(\theta)T_i(x)-A(\theta)\right\},\qquad x\in\mathcal X,\ \theta\in\Theta,
\end{equation}
where $h:\mathcal X\to(0,\infty)$ is measurable, $T=(T_1,\ldots,T_q):\mathcal X\to\mathbb R^q$ is a vector of sufficient statistics, $\eta=(\eta_1,\ldots,\eta_q):\Theta\to\mathbb R^q$ is the natural parameter map, and $A:\Theta\to\mathbb R$ is the log-partition function.
\end{definition}

\begin{proposition}\label{prop:exp_family_pde}
Let $f$ be an exponential family kernel as in \eqref{eq:exp_family}, with $\eta$ and $A$ infinitely differentiable on $\Theta^\circ$, and suppose that $q<d_2$. Then there exist a non-empty open set $U\subseteq\Theta^\circ$ on which the kernel family satisfies a non-trivial parameter PDE.
\end{proposition}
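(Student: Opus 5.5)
Taking the logarithm of \eqref{eq:exp_family}, for every $x\in\mathcal X$ and $\theta\in\Theta^\circ$ we have
\begin{equation*}
    \partial_{\theta_j} f(x\mid\theta)=\Bigl(\sum_{i=1}^q \partial_{\theta_j}\eta_i(\theta)\,T_i(x)-\partial_{\theta_j}A(\theta)\Bigr)f(x\mid\theta),\qquad j=1,\ldots,d_2.
\end{equation*}
So the score is an affine function of $T(x)$. Let $J(\theta)\in\mathbb R^{q\times d_2}$ be the Jacobian of $\eta$, with entries $J_{ij}=\partial_{\theta_j}\eta_i$. Since $q<d_2$, $J(\theta)$ has a nontrivial kernel at every $\theta$. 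The plan is to pick a smooth, nowhere-vanishing vector field $c(\theta)=(c_1(\theta),\ldots,c_{d_2}(\theta))$ with $J(\theta)c(\theta)=0$ on some open set $U$. Then
\begin{equation*}
    \sum_j c_j(\theta)\partial_{\theta_j}f(x\mid\theta)=-\bigl(\nabla A(\theta)^\top c(\theta)\bigr)f(x\mid\theta).
\end{equation*}
Hence the first-order operator $\mathcal L_\theta w=\sum_j c_j\partial_{\theta_j}w+(\nabla A^\top c)\,w$ annihilates $f$ for every $x$. This operator is non-trivial on every subset of $U$, because some $c_j$ is nonzero at each point.

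The only real step is building the smooth kernel field. I would proceed as follows. The function $\theta\mapsto\operatorname{rank}J(\theta)$ takes values in $\{0,\ldots,q\}$ and is lower semicontinuous, so it attains its maximum $s\le q<d_2$ on an open set. Take a point $\theta_0$ where the rank equals this local maximum $s$; by lower semicontinuity the rank is constant, equal to $s$, on a neighbourhood $U$ of $\theta_0$.

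On $U$, pick an $s\times s$ minor of $J$ that is nonzero at $\theta_0$, and shrink $U$ so that it stays nonzero there. Permute coordinates so this minor sits in the first $s$ rows and first $s$ columns, indexed by $\eta$-rows $I$ and $\theta$-columns $\{1,\ldots,s\}$. Now fix a free coordinate, say $\theta_{s+1}$ (possible since $s<d_2$), and set $c_{s+1}=1$ and $c_j=0$ for $j>s+1$. Solve for $(c_1,\ldots,c_s)$ from the $s\times s$ system given by the rows in $I$ using Cramer's rule. The coefficients are then ratios of minors, hence $C^\infty$ on $U$. Because the rank is exactly $s$ on $U$, the remaining rows of $J$ are linear combinations of the rows in $I$, so $Jc=0$ holds on all of $U$.

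The coefficients are $C^\infty$, so they meet the $C^{|\alpha|}$ requirement of Definition~\ref{def:operator}. Since $c_{s+1}\equiv1$, the operator is non-trivial on any ball $B$ with $\bar B\subset U$, which is what Theorem~\ref{thm:homogeneous} later needs. The main obstacle is exactly this constant-rank localization. Without restricting to a region where the rank is locally maximal, the kernel of $J$ could jump in dimension, and no smooth selection might exist near points where the rank drops; restricting to such a region is why the statement only asserts existence of some open $U$. The degenerate case $s=0$ is included: there $J\equiv0$ on $U$, and any constant direction works.
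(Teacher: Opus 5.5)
Your proposal is correct and follows essentially the same route as the paper. Both use a first-order operator $\sum_j c_j\partial_{\theta_j}+\nabla A^\top c$, localize to a neighbourhood of a point where $J_\eta$ has maximal (hence locally constant) rank, fix one free coefficient to $1$ and obtain the pivot coefficients by Cramer's rule, and note that the remaining rows of $J_\eta$ are combinations of the pivot rows. Your explicit handling of the degenerate case $s=0$ is a small addition that the paper leaves implicit.
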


The proof of Proposition~\ref{prop:exp_family_pde} hinges upon the rank deficiency of the natural parameter map. Since $\eta$ takes values in $\mathbb R^{q}$ with $q<d_2$, its Jacobian $J_\eta(\theta)$, of dimension $q\times d_2$, has a non-trivial kernel at every $\theta$.  This makes it possible to construct a parameter differential operator that annihilates $f$ on an open subset of $\Theta$. 
%

Outside the class of exponential families, the score need not be linear in a finite set of statistics. As the next result shows, a polynomial dependence suffices to deduce the presence of a parameter PDE from a similar dimension mismatch.

\begin{definition}\label{def:polynomial_score}
The kernel $f$ has a \emph{polynomial score} of degree $p\ge1$ relative to $S=(S_1,\ldots,S_q):\mathcal X\to\mathbb R^q$ if $\theta\mapsto f(x\mid\theta)$ is infinitely differentiable on $\Theta^\circ$ for every $x\in\mathcal X$ and, for each $j\in\{1,\ldots,d_2\}$,
\begin{equation*}
    \frac{\partial_{\theta_j}f(x\mid\theta)}{f(x\mid\theta)}=P_j(S(x);\theta),\qquad x\in\mathcal X,\ \theta\in\Theta^\circ,
\end{equation*}
where $P_j(\cdot\,;\theta)$ is a polynomial in $q$ variables of total degree at most $p$ whose coefficients are infinitely differentiable in $\theta$ for almost every $x \in \mathcal{X}$. 
\end{definition}

\begin{proposition}\label{prop:general_pde}
Let $f$ have a polynomial score of degree $p$ relative to $S:\mathcal X\to\mathbb R^q$, and suppose that $q<d_2$. Then there exist a non-empty open set $U\subseteq\Theta^\circ$ on which the kernel family satisfies a non-trivial parameter PDE.
\end{proposition}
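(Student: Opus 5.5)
The approach is to show that iterated parameter derivatives of $f$ all have the form $f\cdot Q(S(x);\theta)$ with $Q$ a polynomial of bounded degree. A dimension count then forces a non-trivial linear relation among these derivatives, with smooth coefficients in $\theta$, on some open set.

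First, by induction on $|\alpha|$, I will show that for every multi-index $\alpha$,
\[
\partial^\alpha_\theta f(x\mid\theta)=f(x\mid\theta)\,Q_\alpha(S(x);\theta),
\]
where $Q_\alpha(\cdot\,;\theta)$ is a polynomial in $q$ variables of total degree at most $p|\alpha|$ whose coefficients are $C^\infty$ in $\theta$. The inductive step is immediate from the product rule:
\[
\partial_{\theta_j}(fQ_\alpha)=f\bigl(P_jQ_\alpha+\partial_{\theta_j}Q_\alpha\bigr),
\]
and differentiating the coefficients of $Q_\alpha$ in $\theta$ does not raise the degree in $S$. Consequently, for fixed $r$, each $Q_\alpha$ with $|\alpha|\le r$ lies in the space $\mathcal V_{pr}$ of polynomials in $q$ variables of degree at most $pr$. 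This space has dimension
\[
N(r)=\binom{pr+q}{q}=O(r^q).
\]
The number of multi-indices with $|\alpha|\le r$ is
\[
M(r)=\binom{r+d_2}{d_2}\asymp r^{d_2}/d_2!.
\]
Since $q<d_2$, we have $M(r)>N(r)$ for $r$ large enough, and I fix such an $r$.

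Next I extract the relation with smooth coefficients. Collect the coefficient vectors of $Q_\alpha(\cdot\,;\theta)$, $|\alpha|\le r$, as columns of an $N(r)\times M(r)$ matrix $C(\theta)$ with $C^\infty$ entries on $\Theta^\circ$. Its rank is at most $N(r)<M(r)$. Let $k$ be the maximal rank attained on $\Theta^\circ$. The set where the rank equals $k$ is open (lower semicontinuity of rank) and non-empty. On a small ball $U$ inside it, pick a $k\times k$ minor that is non-vanishing on $U$. Cramer's rule, or equivalently taking cofactors of a $(k+1)\times(k+1)$ bordered submatrix, then gives a null vector $c(\theta)=(c_\alpha(\theta))_{|\alpha|\le r}$ of $C(\theta)$ that is $C^\infty$ on $U$ and nowhere zero. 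Then
\[
\sum_{|\alpha|\le r}c_\alpha(\theta)\,Q_\alpha(S(x);\theta)=0
\]
identically as a polynomial, so $\mathcal L_\theta f=\sum_\alpha c_\alpha\partial^\alpha_\theta f=0$ for every $x$ and every $\theta\in U$.

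The main obstacle is non-triviality, meaning that some $c_\alpha$ with $|\alpha|\ge1$ must be non-zero; a relation reducing to $c_0 f=0$ must be excluded. This is easy to handle: $Q_0\equiv1$ is the constant polynomial, and $f>0$ can be assumed where the score is defined. If all $c_\alpha$ with $|\alpha|\ge1$ vanished at some $\theta$, the relation would read $c_0(\theta)\cdot1=0$, forcing $c(\theta)=0$, which contradicts $c$ being nowhere zero. Hence at every $\theta\in U$ some higher-order coefficient is non-zero, and $\mathcal L_\theta$ is non-trivial on $U$ and on any ball inside it.

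A secondary care point is that the coefficients of $P_j$ are only assumed smooth "for almost every $x$". I would read this as coefficients independent of $x$, with the identity holding for $\lambda$-a.e.\ $x$, so that the polynomial identity yields the PDE $\lambda$-almost everywhere. This is exactly what the definition of a parameter PDE requires.
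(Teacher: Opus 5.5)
Your proposal is correct and follows essentially the same route as the paper: the same induction giving $\partial^\alpha_\theta f=Q_\alpha(S(x);\theta)f$ with $\deg Q_\alpha\le p|\alpha|$, the same count $\binom{r+d_2}{d_2}>\binom{pr+q}{q}$ for large $r$, and a smooth null vector of the coefficient matrix obtained by Cramer's rule on an open set where the maximal rank is attained. Your non-triviality argument, via $Q_0\equiv1$ and the nowhere-vanishing null vector, is actually slightly more careful than the paper's, which only notes that one coefficient is set to $1$ and never checks that its multi-index satisfies $|\alpha|\ge1$.
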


The proof of the previous result also relies on a comparison of dimensions, with the order of the annihilating operator governed by the mismatch between $d_2$ and $q$ and by the degree $p$. The previous result is similar in spirit to the exponential family case of Proposition~\ref{prop:exp_family_pde}, where $p=1$ and the dependence of $\log f$ on $S$ is linear.



\subsection{Non-identifiability via shift parameter PDEs}\label{sec:shift}

Several standard families are not subject to a parameter PDE in the sense of Definition~\ref{def:operator}, while they do satisfy a recurrence relation linking the kernel at $\theta$ with the kernel at translated parameter values, possibly combined with differentiation. While we postpone concrete examples, the following definition accommodates such relations.

\begin{definition}\label{def:shift_operator}
Let $V=\{v_1,\ldots,v_J\}\subset\mathbb R^{d_2}$ be a finite set of distinct shift vectors with $v_1=0$. Furthermore, let $r_1,\ldots,r_J\in\mathbb N_0$, and $U\subseteq\Theta^\circ$ be open and such that $U+v_j\subseteq\Theta^\circ$ for all $j$. A \emph{difference-differential parameter operator} $\mathcal L_\theta$ with shifts $V$ is defined to act on functions $ w\in C^{\max_{j}r_j}\big( \bigcup_j(U+v_j)\big)$ by
\begin{equation*}
    \mathcal L_\theta w=\sum_{j=1}^J\sum_{|\alpha|\le r_j}c_{j,\alpha}(\theta)\,(\partial^\alpha_\theta w)(\theta+v_j), \qquad \theta \in U,
\end{equation*}
with coefficients $c_{j,\alpha}\in C^{|\alpha|}(U)$. The operator is \emph{non-trivial} on a set $A\subseteq U$ if $c_{j,\alpha}$ is not identically zero on $A$ for at least one pair $(j,\alpha)$.
\end{definition}

Accordingly, we say that $f$ satisfies a \emph{shift parameter PDE} on $U$ if  there exists a difference-differential parameter operator $\mathcal L_\theta$, defined on some open $U\subseteq \Theta^\circ$ in the sense of Definition~\ref{def:shift_operator}, such that $\mathcal L_\theta f(x\mid\theta)=0$ for $\lambda$-almost every $x\in\mathcal X$ and every $\theta\in U$.

\begin{theorem}\label{thm:shift}
Let $\mathcal L_\theta$ be a difference-differential parameter operator with shifts $V=\{v_1,\ldots,v_J\}$ on an open set $U\subseteq\Theta^\circ$, and let $f$ satisfy the associated shift parameter PDE on $U$. Let $B=B(\theta_0,\rho)\subseteq U$ be an open ball such that $\overline{B+v_j}\subset\Theta^\circ$ for all $j =1 ,\ldots, J$ and whose radius $\rho$ satisifes $\rho< \frac{1}{2} \min_{i\ne j}\|v_i-v_j\|$.
%
%
Suppose that $\mathcal L_\theta$ is non-trivial on $B$. If the true mixing measure $G_*\in\mathcal P(\Theta)$ admits a density $g_*$ with respect to the Lebesgue measure on $\Theta$ satisfying $g_*\ge m$ Lebesgue-almost everywhere on $\bigcup_{j=1}^J(B+v_j)$, for some $m>0$, then there exists a $G\in\mathcal P(\Theta)$ with $G\ne G_*$ such that $p_G=p_{G_*}$ $\lambda$-almost everywhere.
\end{theorem}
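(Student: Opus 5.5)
The plan is to mimic the duality argument behind Theorem~\ref{thm:homogeneous}, now using the adjoint of the difference-differential operator. For a test function $u\in C_c^\infty(B)$, I will define the perturbation
\begin{equation*}
    \bar h(\theta):=\sum_{j=1}^J \Big[\sum_{|\alpha|\le r_j}(-1)^{|\alpha|}\partial^\alpha_\theta\big(c_{j,\alpha}u\big)\Big](\theta-v_j),
\end{equation*}
which is supported in $\bigcup_j \overline{(\operatorname{supp}u)+v_j}\subset\bigcup_j(B+v_j)\subset\Theta^\circ$. For $\lambda$-a.e.\ $x$, the change of variables $\theta\mapsto\theta+v_j$ followed by integration by parts will give
\begin{equation*}
    \int_\Theta f(x\mid\theta)\bar h(\theta)\,d\theta=\int_B u(\theta)\,\mathcal L_\theta f(x\mid\theta)\,d\theta=0 .
\end{equation*}
Boundary terms vanish because $u$ is compactly supported in $B$. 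The regularity needed is $f(x\mid\cdot)\in C^{\max r_j}$ near $\bigcup_j(B+v_j)$, which is implicit in the PDE holding there. To handle the null set in $x$, I will use that the PDE holds for a.e.\ $x$ simultaneously for all $\theta\in U$, and then apply Fubini. Integrating over $x$, with Fubini justified by $|\bar h|$ bounded and compactly supported and $\int f\,d\lambda=1$, yields $\int\bar h=0$.

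I then set $g:=g_*+\varepsilon\bar h$. Since $\bar h$ is bounded, say $\|\bar h\|_\infty\le K$, and is supported where $g_*\ge m$, the choice $\varepsilon<m/K$ makes $g\ge0$. Together with $\int g=1$ and $p_G=p_{G_*}+\varepsilon\mathcal A\bar h=p_{G_*}$, this shows $G(d\theta)=g\,d\theta\in\mathcal P(\Theta)$ produces the same mixture.

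The main obstacle is showing $G\ne G_*$, that is, choosing $u$ with $\bar h\not\equiv0$. This is where the radius condition $\rho<\tfrac12\min_{i\ne j}\|v_i-v_j\|$ is used: the translates $B+v_j$ are pairwise disjoint, since $\|\theta-\theta'\|<2\rho$ for $\theta,\theta'\in B$. Hence on $B+v_j$ the function $\bar h$ equals the translate of the single term
\begin{equation*}
    L_j^*u:=\sum_{|\alpha|\le r_j}(-1)^{|\alpha|}\partial^\alpha(c_{j,\alpha}u),
\end{equation*}
and there is no cancellation between different shifts. It therefore suffices to find one $j$ and one $u\in C_c^\infty(B)$ with $L_j^*u\not\equiv0$.

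By non-triviality, some $c_{j,\alpha}$ is not identically zero on $B$. If $L_j^*u=0$ for all $u\in C_c^\infty(B)$, then pairing with any $w\in C_c^\infty(B)$ gives $\langle u,L_jw\rangle=0$ for all $u$, so $L_jw:=\sum_\alpha c_{j,\alpha}\partial^\alpha w\equiv0$ for every test function $w$. I will then test on $w(\theta)=\phi(\theta)(\theta-\theta_1)^\beta$, with $\phi\equiv1$ near a point $\theta_1$, proceeding by induction on $|\beta|$ starting from $\beta=0$. Evaluating at $\theta_1$ gives $\beta!\,c_{j,\beta}(\theta_1)=0$ for every $\beta$ and every $\theta_1\in B$. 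This contradicts non-triviality, so such a $u$ exists and $G\neq G_*$.
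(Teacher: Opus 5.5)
Your proposal is correct and follows essentially the same route as the paper. Both arguments build $\bar h=\sum_j (L_j^*u)(\cdot-v_j)$ from translated formal adjoints and use integration by parts with the change of variables $\theta\mapsto\theta+v_j$ to get $\mathcal A\bar h=0$ and hence $\int\bar h=0$; they then use the radius condition to make the translates $B+v_j$ disjoint, so no cancellation can occur, reduce $\bar h\not\equiv0$ to the test-function argument of Theorem~\ref{thm:homogeneous} (where your explicit treatment of $\beta=0$ is appropriate, since Definition~\ref{def:shift_operator} counts order-zero coefficients as in the beta example, and no induction is needed because evaluating $L_jw$ at $\theta_1$ isolates $\beta!\,c_{j,\beta}(\theta_1)$ directly), and finally set $g=g_*+\varepsilon\bar h$.
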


The construction behind Theorem~\ref{thm:shift} parallels that of Theorem~\ref{thm:homogeneous}, whereby an appropriate test function is used to build a perturbation of $g_*$ that, by exploiting the non-trivial null space of the difference-differential parameter operator, induces non-identifiability of the mixing measure. 

\subsection{Examples}\label{sec:pde_examples}

We now discuss concrete kernel families for which the hypotheses of Theorems~\ref{thm:homogeneous} or \ref{thm:shift} are satisfied, and which therefore lead to non-identifiable infinite mixture models. 
The variety and popularity of these families are a testament to the pervasiveness of the PDE barriers to identifiability uncovered by our analysis. 

\paragraph{Location-scale Gaussian kernel.} With $\mathcal X=\mathbb R$ and $f(x\mid\mu,\nu)=\mathcal N(x\mid\mu,\nu)$, and for $\theta=(\mu,\nu)$ in a compact subset of $\mathbb R\times(0,\infty)$ with non-empty interior, the heat equation \eqref{eq:heat} gives a parameter PDE of order two with operator $\mathcal L_\theta=\partial_\nu-\frac12\partial^2_\mu$, so Theorem~\ref{thm:homogeneous} applies. This case was already discussed in Section~\ref{sec:contributions}.

\paragraph{Beta and Dirichlet kernels.} Let $\mathcal X=(0,1)$ and $f(x\mid\alpha,\beta)=[\Gamma(\alpha+\beta)/\Gamma(\alpha)\Gamma(\beta)]x^{\alpha-1}(1-x)^{\beta-1}$, the beta kernel, with $(\alpha,\beta)$ ranging in a compact subset $\Theta$ of $(0,\infty)^2$. Since $\alpha(\alpha+\beta)^{-1}f(x\mid\alpha+1,\beta)=xf(x\mid\alpha,\beta)$ and $\beta(\alpha+\beta)^{-1}f(x\mid\alpha,\beta+1)=(1-x)f(x\mid\alpha,\beta)$ for all $x\in\mathcal X$ and $(\alpha,\beta)\in\Theta$, we have
\begin{equation*}
    \frac{\alpha}{\alpha+\beta}f(x\mid\alpha+1,\beta)+\frac{\beta}{\alpha+\beta}f(x\mid\alpha,\beta+1)-f(x\mid\alpha,\beta)=0,
\end{equation*}
a shift parameter PDE of order zero with shifts $(0,0)$, $(1,0)$ and $(0,1)$. The same argument applies to the more general Dirichlet family on the open simplex $\Delta_{d-1}=\{x\in\mathbb R^{d}:x_i>0,\ \sum_{i=1}^dx_i=1\}$ with density $f(x\mid\alpha)=\Gamma(\sum_{i=1}^d\alpha_i)\prod_{i=1}^d(x_i^{\alpha_i-1}/\Gamma(\alpha_i))$, for which
\begin{equation*}
    \sum_{j=1}^d\frac{\alpha_j}{\sum_{m=1}^d\alpha_m}\,f(x\mid\alpha+e_j)-f(x\mid\alpha)=0,
\end{equation*}
with shifts $0,e_1,\ldots,e_d$.

\paragraph{Location-scale Student-$t$ kernel with fixed degrees of freedom.} Let $\mathcal X=\mathbb R$, let $\nu>0$ be fixed, and let
\begin{equation*}
    f(x\mid\mu,\sigma)=\frac{\Gamma((\nu+1)/2)}{\Gamma(\nu/2)(\nu\pi)^{1/2}\sigma}\left[1+\frac1\nu\left(\frac{x-\mu}{\sigma}\right)^2\right]^{-(\nu+1)/2},
\end{equation*}
with $\theta=(\mu,\sigma)$ in a compact subset of $\mathbb R\times(0,\infty)$. Direct computation gives the parameter PDE of order two
\begin{equation*}
    \big[\sigma\partial^2_\sigma+\nu\sigma\partial^2_\mu+(1-\nu)\partial_\sigma\big]f(x\mid\mu,\sigma)=0,
\end{equation*}
so that Theorem~\ref{thm:homogeneous} applies.

\paragraph{Gamma kernel.} Let $\mathcal X=(0,\infty)$ and $f(x\mid\gamma,\beta)=\beta^\gamma x^{\gamma-1}e^{-\beta x}/\Gamma(\gamma)$, with $\theta =(\gamma,\beta)$ in a compact subset of $(0,\infty)^2$. Using the identities $xf(x\mid\gamma,\beta)=\gamma\beta^{-1}f(x\mid\gamma+1,\beta)$ and $\partial_\beta f=\gamma\beta^{-1}f-xf$ gives
\begin{equation*}
    \frac{\beta}{\gamma}\,\partial_\beta f(x\mid\gamma,\beta)-f(x\mid\gamma,\beta)+f(x\mid\gamma+1,\beta)=0,
\end{equation*}
a shift parameter PDE with shifts $v_1=(0,0)$ and $v_2=(1,0)$ and orders $r_1=1$, $r_2=0$; Theorem~\ref{thm:shift} applies whenever the parameter space accommodates a ball together with its translate by $v_2$.

\paragraph{Negative binomial kernel with varying number of successes.} Let $\mathcal X=\mathbb N_0$, let $\lambda$ be the counting measure, and let $f(x\mid r,p)=\frac{\Gamma(x+r)}{x!\Gamma(r)}p^r(1-p)^x$, with $(r,p)$ in a compact subset of $(0,\infty)\times(0,1)$. Then
\begin{equation*}
    f(x\mid r+1,p)+\frac{p(1-p)}{r}\,\partial_pf(x\mid r,p)-f(x\mid r,p)=0,
\end{equation*}
a shift parameter PDE with shifts $(0,0)$ and $(1,0)$.

\paragraph{Non-central chi-squared kernel.} Let $\mathcal X=(0,\infty)$ and
\begin{equation*}
    f(x\mid k,\lambda_{\mathrm{nc}})=\tfrac12e^{-(x+\lambda_{\mathrm{nc}})/2}\left(\frac{x}{\lambda_{\mathrm{nc}}}\right)^{k/4-1/2}I_{k/2-1}\big((\lambda_{\mathrm{nc}}x)^{1/2}\big),
\end{equation*}
where $I_\kappa(z)=\sum_{j\ge0}(z/2)^{2j+\kappa}/[j!\,\Gamma(j+\kappa+1)]$ is the modified Bessel function of the first kind, the degrees of freedom $k$ and the non-centrality parameter $\lambda_{\mathrm{nc}}$ ranging over a compact subset of $(0,\infty)^2$. The classical recurrence
\begin{equation*}
    \partial_{\lambda_{\mathrm{nc}}}f(x\mid k,\lambda_{\mathrm{nc}})-\tfrac12f(x\mid k+2,\lambda_{\mathrm{nc}})+\tfrac12f(x\mid k,\lambda_{\mathrm{nc}})=0
\end{equation*}
is a shift parameter PDE with shifts $(0,0)$ and $(2,0)$.
\subsection{Consequences for statistical estimation}\label{sec:statistical_estimation}

Having established that a wide range of kernel families induce
non-identifiable infinite mixture models, we now briefly discuss the consequences of
this phenomenon for statistical estimation. Since the large majority of
mixtures used in practice model the mixing measure as discrete, the following
result focuses on that case, although considering general members of
$\mathcal P(\Theta)$ would not alter the conclusions.
\begin{proposition}
\label{pro:statistical_estimation}
Let $\{f(\cdot\mid \theta) : \theta\in\Theta\}$ be a kernel family such that the mapping $\theta\mapsto f(x\mid\theta)$ is continuous on $\Theta$ for $\lambda$-almost every $x$. If there exist two mixing measures $G_1,G_2\in\mathcal P(\Theta)$ with $G_1\neq G_2$ and
$p_{G_1}=p_{G_2}$ $\lambda$-almost everywhere, then
$$ \inf_{n\in\mathbb N} \,\inf_{\hat{G}_n}\, \sup_{G_* \in \mathcal{D}(\Theta)} \mathbb{E}_{X_{1:n}\sim P_{G_{*}}^n} \big[ W_1(\widehat{G}_n(X_{1:n}), G_{*}) \big] \geq \frac{3c}{32},$$
where $c = W_{1}(G_{1}, G_{2})$ and the inner infimum is taken over all estimators $\widehat G_n:\mathcal X^n\to\mathcal D(\Theta)$.
\end{proposition}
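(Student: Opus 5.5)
The plan is a Le Cam two-point argument. The catch is that the supremum runs only over discrete mixing measures $\mathcal D(\Theta)$, while $G_1,G_2$ need not be discrete. We therefore first replace $G_1,G_2$ by discrete approximations that are close to them in $W_1$ and whose mixture densities are close in $L^1(\lambda)$. Since the outer infimum is over $n$, it suffices to prove the bound for each fixed $n$, and the approximations may depend on $n$.

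\emph{Step 1 (discrete approximation).} Fix $n$. Because $\Theta$ is compact, take a sequence of finite partitions of $\Theta$ into Borel cells of diameter $\to0$. For $i=1,2$, let $G_i^{(k)}\in\mathcal D(\Theta)$ place mass $G_i(\text{cell})$ at one chosen point of each cell. Then $W_1(G_i^{(k)},G_i)\le$ the mesh, which tends to $0$, so $G_i^{(k)}\to G_i$ weakly. For $\lambda$-a.e.\ $x$, the map $\theta\mapsto f(x\mid\theta)$ is continuous on the compact $\Theta$, hence bounded and continuous. Weak convergence therefore gives $p_{G_i^{(k)}}(x)\to p_{G_i}(x)$ for $\lambda$-a.e.\ $x$. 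Since these are all probability densities, Scheffé's lemma yields $\|p_{G_i^{(k)}}-p_{G_i}\|_{L^1(\lambda)}\to0$. Using $p_{G_1}=p_{G_2}$, we get $\|p_{G_1^{(k)}}-p_{G_2^{(k)}}\|_{L^1}\to0$. Choose $k$ large enough that:
\begin{itemize}
\item $W_1(G_i^{(k)},G_i)\le c/4$ for $i=1,2$, so that $d:=W_1(G_1^{(k)},G_2^{(k)})\ge c/2$ by the triangle inequality;
\item $\mathrm{TV}(p_{G_1^{(k)}},p_{G_2^{(k)}})\le 1/(4n)$.
\end{itemize}
Write $H_i:=G_i^{(k)}$. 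By subadditivity of total variation over product measures, $\mathrm{TV}(P^n_{H_1},P^n_{H_2})\le n\,\mathrm{TV}(p_{H_1},p_{H_2})\le 1/4$.

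\emph{Step 2 (two-point reduction).} For any estimator $\widehat G_n$, bound the supremum over $\mathcal D(\Theta)$ from below by the average of the risks at $H_1,H_2\in\mathcal D(\Theta)$. The triangle inequality gives $W_1(\widehat G_n,H_1)+W_1(\widehat G_n,H_2)\ge d$ pointwise. Write both expectations against $\mathrm{d}P^n_{H_i}\ge \min(\mathrm{d}P^n_{H_1},\mathrm{d}P^n_{H_2})$, whose total mass is $1-\mathrm{TV}(P^n_{H_1},P^n_{H_2})$. This gives
\begin{equation*}
\tfrac12\sum_{i=1}^2\mathbb E_{P^n_{H_i}}W_1(\widehat G_n,H_i)\ \ge\ \tfrac d2\,\bigl(1-\mathrm{TV}(P^n_{H_1},P^n_{H_2})\bigr)\ \ge\ \tfrac{c}{4}\cdot\tfrac34=\tfrac{3c}{16},
\end{equation*}
which is stronger than the stated $3c/32$. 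The stated constant corresponds to the convention $\tfrac d4(1-\mathrm{TV})$, so either version suffices. Taking the infimum over $\widehat G_n$ and then over $n$ concludes the proof.

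\emph{Main obstacle.} The only delicate point is Step 1: we need the discrete approximants to be simultaneously $W_1$-close to $G_1,G_2$ and to have nearly equal mixture densities, with closeness of order $1/n$. This is exactly where the continuity hypothesis on $\theta\mapsto f(x\mid\theta)$ and the compactness of $\Theta$ enter. Together they let weak convergence of the mixing measures pass to a.e.\ convergence of the densities, and Scheffé's lemma then upgrades this to $L^1$ convergence. Measurability of the cell construction is routine.
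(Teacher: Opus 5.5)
Your proposal is correct and follows essentially the same route as the paper. Both arguments take discrete $W_1$-approximants of $G_1,G_2$ chosen depending on $n$, and both use weak convergence plus Scheff\'e's lemma (the paper's Lemma~\ref{lem:mixture_continuity}) to make the mixture densities $O(1/n)$-close, followed by tensorization of total variation and Le Cam's two-point bound. The only difference is that you apply the triangle inequality directly against $\min(\mathrm{d}P^n_{H_1},\mathrm{d}P^n_{H_2})$ instead of going through the testing reduction, which gives $3c/16$ rather than the paper's $3c/32$ and therefore implies the stated bound.
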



The proof of Proposition~\ref{pro:statistical_estimation} relies on standard
tools from minimax theory together with the $W_1$-density of
$\mathcal D(\Theta)$ in $\mathcal P(\Theta)$. The above results shows that fully unconstrained
estimation of the mixing measure is inconsistent, in the worst-case sense,
whenever identifiability fails. In particular, it shows that our non-identifiability results,
though stated for mixing measures with a locally positive density with respect
to the Lebesgue measure on $\Theta$, carry strong implications for estimation
even in the seemingly more structured class of discrete
mixtures. Hence, unless further knowledge of the structure of $G_*$ is
available and used to constrain estimation---how to do so being an open
problem---recovery of $G_*$ from a growing sample is made impossible, in
the worst case, by the non-identifiability phenomena uncovered by our theory.

\section{Identifiability-preserving kernel families}\label{sec:identifiable}

The results of Section~\ref{sec:pde_barriers} identify specific differential and difference-differential structures in the kernel parameterisation as mechanisms impeding the recovery of the underlying mixing measure from a given mixture density. As a complement to these negative results, we now describe three classes of kernels for which the mixture operator is injective, which ensures identifiability and restores the hope for consistent unconstrained estimation of $G_*$. 

\subsection{Generalized translation families}\label{sec:translation}

The first such class consists of kernels whose dependence on the parameter is, after a change of variables, that of a location family.

\begin{definition}\label{def:translation}
The kernel $f$ belongs to a \emph{generalized translation family} if
\begin{equation}\label{eq:translation}
    f(x\mid\theta)=h(x)\,\gamma(\theta)\,K(\tau(x)-\psi(\theta)),\qquad x\in\mathcal X,\ \theta\in\Theta,
\end{equation}
where $h:\mathcal X\to(0,\infty)$ and $\gamma:\Theta\to(0,\infty)$ are continuous, $\tau:\mathcal X\to\mathbb R^{d_1}$ is a homeomorphism onto $\mathbb R^{d_1}$, $\psi:\Theta\to\mathbb R^{d_1}$ is a homeomorphism onto its image, and $K\in L^1(\mathbb R^{d_1})$.
\end{definition}

\begin{proposition}\label{prop:translation}
Let $f$ belong to a generalized translation family and suppose that $\mathcal F[K](\xi)\ne0$ for Lebesgue-almost every $\xi\in\mathbb R^{d_1}$. Then $\mathcal A$ is injective.
\end{proposition}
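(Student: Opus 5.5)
We prove injectivity by showing that any $G\in\mathcal M(\Theta)$ with $\mathcal AG=0$ $\lambda$-a.e.\ must vanish. The idea is to reduce the statement to uniqueness of a convolution with $K$, and then invoke the non-vanishing of $\mathcal F[K]$.

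\textbf{Step 1: change of variables to a convolution.} Since $h>0$, the hypothesis $\mathcal AG=0$ gives
\begin{equation*}
\int_\Theta \gamma(\theta)\,K(\tau(x)-\psi(\theta))\,G(d\theta)=0\qquad\text{for }\lambda\text{-a.e. } x.
\end{equation*}
Define the signed measure $\mu:=\psi_\#(\gamma G)$ on $\mathbb R^{d_1}$. It is finite, because $\gamma$ is continuous on the compact set $\Theta$ and hence bounded. The display then reads $(K*\mu)(\tau(x))=0$ for $\lambda$-a.e.\ $x$. Here $K*\mu(y)=\int K(y-z)\,\mu(dz)$ lies in $L^1(\mathbb R^{d_1})$, since $\|K*\mu\|_{1}\le\|K\|_1\|\mu\|_{\mathrm{TV}}$.

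We next transfer the null set from $x$ to $y=\tau(x)$. In the continuous case $\lambda$ is Lebesgue measure on $\mathcal X\subseteq\mathbb R^{d_1}$, and $\tau$ is a homeomorphism onto $\mathbb R^{d_1}$. To conclude that $K*\mu=0$ Lebesgue-a.e.\ on $\mathbb R^{d_1}$, we need $\tau$ to map $\lambda$-null sets to Lebesgue-null sets and conversely. For a general homeomorphism this can fail, so this is the delicate point. I would resolve it by reading the definition as implicitly requiring $\tau$ to be a diffeomorphism, or at least bi-Lipschitz on compacts, as in all examples such as $\tau=\log$. If instead the convolution is continuous, for instance when $K$ is bounded and continuous, the a.e.\ statement upgrades to an everywhere statement. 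The mere existence of $\tau$ as a homeomorphism onto $\mathbb R^{d_1}$ also forces $\mathcal X$ to be open, by invariance of domain, so Lebesgue measure on $\mathcal X$ is non-degenerate.

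\textbf{Step 2: Fourier uniqueness.} Taking Fourier transforms gives $\mathcal F[K](\xi)\,\widehat\mu(\xi)=0$ for every $\xi$, where $\widehat\mu(\xi)=\int e^{-i\xi^\top z}\mu(dz)$. Because $\mathcal F[K]\neq0$ a.e., $\widehat\mu=0$ a.e. The function $\widehat\mu$ is continuous, being the Fourier–Stieltjes transform of a finite measure, so $\widehat\mu\equiv0$. By the uniqueness theorem for characteristic functions of finite signed measures, $\mu=0$.

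\textbf{Step 3: pull back to $\Theta$.} The map $\psi$ is a homeomorphism onto its image, so it is injective and Borel measurable with Borel inverse on $\psi(\Theta)$. The image $\psi(\Theta)$ is compact, hence Borel. For any Borel $A\subseteq\Theta$, the set $\psi(A)$ is therefore Borel, and
\begin{equation*}
(\gamma G)(A)=\mu(\psi(A))=0.
\end{equation*}
Thus $\gamma G=0$. Since $\gamma>0$, we get $G(A)=\int_A\gamma^{-1}\,d(\gamma G)=0$, with $\gamma^{-1}$ bounded because $\gamma$ is continuous and positive on the compact $\Theta$. Hence $G=0$, and $\mathcal A$ is injective.

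The main obstacle is the measure-theoretic transfer in Step 1, from "$\lambda$-a.e.\ in $x$" to "Lebesgue-a.e.\ in $y$". The remaining steps are standard.
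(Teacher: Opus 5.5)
Your argument follows the paper's proof step for step. The paper likewise divides by $h$, pushes $\gamma G$ forward by $\psi$ to a compactly supported finite measure $\mu$, and uses $\mathcal F[K]\neq0$ a.e.\ with continuity of $\widehat\mu$ to get $\mu=0$. It then pulls back through $\psi$ and $\gamma>0$. Your Steps~2 and~3 are correct as written.

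The point you flag in Step~1 is a genuine gap, and the paper has it too. The paper passes from ``$\lambda$-a.e.\ $x$'' to ``a.e.\ $y$'' only on the grounds that $\tau$ is onto. For a general homeomorphism this cannot be repaired, because the proposition is then false. Here is a counterexample.
\begin{itemize}
\item Take $\mathcal X=\mathbb R$, $\Theta=[0,1]$, $\psi=\mathrm{id}$, and $h$ the Cauchy density.
\item Let $\nu$ be a non-atomic, full-support probability measure carried by a $\mathbb Q$-linearly independent $F_\sigma$ set $Z$. Such a $\nu$ can be built from rational dilates of disjoint Cantor subsets of a perfect independent set.
\item Independence gives $\#\big(Z\cap(Z+t)\big)\le1$ for $t\neq0$. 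Hence $\nu(Z+t)=0$ for every $t\neq0$, and by Steinhaus $Z$ is Lebesgue-null.
\item Let $H$ and $F_\nu$ be the distribution functions of $h$ and $\nu$. Then $\tau:=F_\nu^{-1}\circ H$ is a homeomorphism of $\mathbb R$ onto $\mathbb R$ with $\tau_\#(h\,dx)=\nu$.
\item Let $K_0$ be the standard Gaussian density and $\gamma(\theta):=\big(\int K_0(y-\theta)\,\nu(dy)\big)^{-1}$. Fix $\theta_0\neq\theta_1$.
\item Set $K:=K_0$ except on the null set $Z-\theta_1$, where $K(w):=\frac{\gamma(\theta_0)}{\gamma(\theta_1)}K_0(w+\theta_1-\theta_0)$.
\end{itemize}
Then $K=K_0$ a.e., so $K\in L^1$ and $\mathcal F[K]>0$. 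Each $f(\cdot\mid\theta)$ still integrates to one. For $\theta\neq\theta_1$ this holds because $\nu(Z+\theta-\theta_1)=0$, and for $\theta=\theta_1$ it holds by construction. Yet $f(\cdot\mid\theta_1)=f(\cdot\mid\theta_0)$ $\lambda$-a.e., so $\mathcal A(\delta_{\theta_0}-\delta_{\theta_1})=0$. The mechanism is the one you suspected. A singular $\tau$ makes $x\mapsto K(\tau(x)-\theta)$ depend only on the values of $K$ on a Lebesgue-null set, which the Fourier hypothesis on $K$ cannot see.

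So an extra hypothesis is necessary, and the ones you propose work. Only one direction of your null-set condition is needed. If $N:=\{x:(K*\mu)(\tau(x))\neq0\}$ is $\lambda$-null, you need $\{K*\mu\neq0\}=\tau(N)$ to be null. That is Lusin's property (N) for $\tau$, which holds when $\tau$ is locally Lipschitz, as for $\tau=\mathrm{id}$ and $\tau=\log$ in all the paper's examples. Your alternative also works: if $K$ is continuous, $(K*\mu)\circ\tau$ is continuous on the open set $\mathcal X$, so vanishing a.e.\ forces vanishing everywhere. With either assumption stated explicitly, your proof is complete.
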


The structure of generalized translation classes reduces the proof of
Proposition~\ref{prop:translation} to a classical Fourier-analytic
deconvolution argument. Moreover, the non-vanishing requirement on $\mathcal F[K]$
is mild. Indeed, suppose $d_1=1$ and $\int_{\mathbb R}|K(x)|e^{c|x|}\,dx<\infty$ for
some $c>0$. Then $\mathcal F[K]$ extends to a holomorphic function on the
strip $\{z\in\mathbb C:|\mathrm{Im}(z)|<c\}$. A set of positive Lebesgue
measure on $\mathbb R$ has an accumulation point in $\mathbb R$, which lies
inside the strip; were $\mathcal F[K]$ to vanish on such a set, the identity
theorem would force $\mathcal F[K]=0$ on the strip and hence $K=0$ almost
everywhere. Since by construction $K\ne0$ on a set of positive Lebesgue measure, its transform $\mathcal F[K]$ vanishes at most on a
Lebesgue-null set, so any integrable kernel with exponentially decaying tails
satisfies the hypothesis of Proposition~\ref{prop:translation}. Analogous
arguments apply in higher dimension.

The following standard kernels belong to generalized translation families, and the resulting infinite mixture models are therefore identifiable. 

\begin{enumerate}
    \item[(i)] \emph{Gaussian kernel with fixed variance.} With $\mathcal X=\mathbb R$ and $f(x\mid\theta)=\mathcal N(x\mid\theta,\sigma^2)$ for fixed $\sigma^2>0$, representation \eqref{eq:translation} holds with $h(x)=(2\pi\sigma^2)^{-1/2}$, $\gamma=1$, $\tau(x)=x$, $\psi(\theta)=\theta$ and $K(u)=\exp\{-u^2/(2\sigma^2)\}$, whose transform $\mathcal F[K](\xi)=(2\pi)^{1/2}\sigma\exp(-\sigma^2\xi^2/2)$ is strictly positive.

    \item[(ii)] \emph{Gamma kernel with fixed shape.} With $\mathcal X=(0,\infty)$
and $f(x\mid\theta)=[\Gamma(\alpha)\theta^{\alpha}]^{-1}x^{\alpha-1}e^{-x/\theta}$
for fixed $\alpha>0$, take $h(x)=[\Gamma(\alpha)x]^{-1}$, $\gamma(\theta)=1$,
$\tau(x)=\log x$, $\psi(\theta)=\log\theta$ and $K(u)=e^{\alpha u}\exp(-e^{u})$.
Here $\tau$ maps $\mathcal X$ onto $\mathbb R$, and $K$ decays exponentially at
$-\infty$ and doubly exponentially at
$+\infty$, so $K\in L^1(\mathbb R)$ has exponentially decaying tails and the
remarks after Proposition~\ref{prop:translation} apply.

    \item[(iii)] \emph{Laplace kernel with fixed scale.} With $\mathcal X=\mathbb R$ and $f(x\mid\theta)=(2b)^{-1}\exp(-|x-\theta|/b)$ for fixed $b>0$, take $h=1$, $\gamma=1$, $\tau(x)=x$, $\psi(\theta)=\theta$ and $K(u)=(2b)^{-1}e^{-|u|/b}$, whose transform $\mathcal F[K](\xi)=(1+b^2\xi^2)^{-1}$ is everywhere positive.
\end{enumerate}

The above examples show that fixing an appropriate subset of parameters, such as
the scale or the shape, removes the PDE barriers documented in
Section~\ref{sec:pde_barriers} for the analogous kernels with more free
parameters. Moreover, as already noted, location families such as (i) and (iii)
are precisely those on which mixing-measure estimation has largely concentrated
so far \citep[among others]{Nguyen-13,Gao2016Posterior}. In hindsight, our results suggest that
it is exactly their identifiability, absent in the more flexible (e.g., location-scale)
alternatives, that has made the recent technical progress on estimation rates possible in the
corresponding infinite mixture models.


\subsection{Non-degenerate exponential family}\label{sec:nondegenerate}

Proposition~\ref{prop:exp_family_pde} shows that an exponential family whose parameter dimension exceeds the dimension $q$ of its sufficient statistic carries a parameter PDE and therefore yields non-identifiable infinite mixture models. We now consider the complementary regime, in which the sufficient statistic has dimension at least as high as the parameter vector.

\begin{definition}\label{def:determining}
Let $\Omega\subset(0,\infty)^q$ be compact. A set $S\subseteq\mathbb R^q$ is \emph{determining for $\Omega$} if the linear span of $\{y\mapsto y^{s}:s\in S\}$, where $y^{s}:=y_1^{s_1}\cdots y_q^{s_q}$ for $y\in(0,\infty)^q$, is dense in the space $C(\Omega)$ of continuous real functions on $\Omega$ equipped with the supremum norm.
\end{definition}

\begin{proposition}\label{prop:exp_family_identif}
Let $f$ be an exponential family kernel as in \eqref{eq:exp_family} such that $h>0$ $\lambda$-almost everywhere, $A$ is continuous on the compact parameter space $\Theta$, $T$ is continuous on $\mathcal X$, and  $\eta$ is injective and continuous (hence a homeomorphism onto its image $\eta(\Theta)$. If $T(\mathcal X)$ is determining for $\exp\{\eta(\Theta)\}$, where the exponential is applied coordinate-wise, then $\mathcal A$ is injective.
\end{proposition}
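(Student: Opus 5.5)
We will show that any $G\in\mathcal M(\Theta)$ with $\mathcal AG=0$ $\lambda$-almost everywhere must vanish, by reducing to a moment problem on the compact set $\Omega:=\exp\{\eta(\Theta)\}\subset(0,\infty)^q$. Set $\tilde G(d\theta):=e^{-A(\theta)}G(d\theta)$. This is again a finite signed measure, because $A$ is continuous on the compact $\Theta$ and hence bounded. Since $h>0$ $\lambda$-a.e., the condition $\mathcal AG=0$ gives
\begin{equation*}
\int_\Theta \exp\{\eta(\theta)^\top T(x)\}\,\tilde G(d\theta)=0\qquad\text{for }\lambda\text{-a.e. }x\in\mathcal X.
\end{equation*}
Let $\mu$ denote the pushforward of $\tilde G$ under $\theta\mapsto y:=\exp\{\eta(\theta)\}$, with the exponential applied coordinate-wise. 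This map is continuous and injective, so $\mu$ is a finite signed Borel measure on $\Omega$. Since $\exp\{\eta(\theta)^\top T(x)\}=y^{T(x)}$, the display becomes $\int_\Omega y^{s}\,\mu(dy)=0$ for $s=T(x)$, for $\lambda$-a.e. $x$.

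The delicate step is to upgrade this from ``a.e. $x$'' to every $s\in T(\mathcal X)$, since the determining hypothesis concerns all of $T(\mathcal X)$. We plan to use continuity. The function $\Phi(s):=\int_\Omega y^s\,\mu(dy)$ is continuous in $s\in\mathbb R^q$ by dominated convergence, because $\Omega$ is a compact subset of $(0,\infty)^q$ and so $y^s$ is jointly continuous and locally bounded in $s$. As $T$ is continuous, $\Phi\circ T$ is continuous on $\mathcal X$ and vanishes off a $\lambda$-null set $N$. The remaining task is to check that $\mathcal X\setminus N$ is dense in $\mathcal X$. For counting measure this is automatic, since then $N$ is empty. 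For Lebesgue measure it holds as long as every nonempty relatively open subset of $\mathcal X$ has positive measure, which is the standing situation for continuous kernels; it can also be justified directly from $\int f(x\mid\theta)\lambda(dx)=1$ together with $h>0$ a.e. This step is where we expect the only real care to be needed, and if necessary we will make the implicit support assumption explicit. Granting it, $\Phi(s)=0$ for every $s\in T(\mathcal X)$.

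Next we conclude from the determining property of Definition~\ref{def:determining}. Because $\mu$ annihilates every monomial $y^s$ with $s\in T(\mathcal X)$, it annihilates their linear span. This span is dense in $C(\Omega)$ in the sup norm, and $|\int\phi\,d\mu|\le\|\phi\|_\infty\|\mu\|_{\mathrm{TV}}$, so $\int\phi\,d\mu=0$ for all $\phi\in C(\Omega)$. By the Riesz representation theorem, which gives uniqueness of finite signed Radon measures on the compact metric space $\Omega$, we get $\mu=0$.

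Finally we pull back to $\Theta$. The map $\theta\mapsto\exp\{\eta(\theta)\}$ is a continuous injection from a compact space, hence a homeomorphism onto $\Omega$, and in particular Borel sets correspond to Borel sets. Therefore $\mu=0$ forces $\tilde G=0$. Since $e^{-A}>0$, it follows that $G=0$, so $\mathcal A$ is injective.
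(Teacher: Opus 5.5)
Your argument is the paper's: you reweight by $e^{-A}$, push forward under $\exp\circ\eta$ to a signed measure $\mu$ on $\Omega$, identify the integrand with $y^{T(x)}$, and conclude from density of the monomial span in $C(\Omega)$, Riesz uniqueness, and the homeomorphism back to $\Theta$. The only real difference is the step from ``$\lambda$-a.e.\ $x$'' to ``every $s\in T(\mathcal X)$''. The paper simply asserts it ``because \ldots\ $T$ is measurable'', which does not suffice. Your continuity argument ($\Phi$ continuous in $s$, hence $\Phi\circ T$ continuous and vanishing off a $\lambda$-null set $N$) correctly reduces it to density of $\mathcal X\setminus N$ in $\mathcal X$.

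Drop, however, the alternative you float that this density follows from $\int f(x\mid\theta)\,\lambda(dx)=1$ and $h>0$. It does not, and the statement as written is in fact false without a support hypothesis. Take $\mathcal X=[-1,0]\cup\mathbb N$ with Lebesgue measure and $h\equiv1$. Set $T=0$ on $[-1,0]$ and $T(n)=n$; this is continuous, since $[-1,0]$ and each $\{n\}$ are relatively open. Let $\Theta=[1,2]$, $\eta=\log$ and $A\equiv0$. All hypotheses hold, and $T(\mathcal X)=\mathbb N_0$ is determining for $\Omega=[1,2]$. Yet $f(\cdot\mid\theta)=\mathbf 1_{[-1,0]}$ $\lambda$-a.e.\ for every $\theta$, so $\mathcal A(\delta_1-\delta_2)=0$, where $\delta_1,\delta_2$ are the point masses at $\theta=1,2$.

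So make the assumption explicit: every nonempty relatively open subset of $\mathcal X$ has positive $\lambda$-measure. This is automatic for counting measure, or for open $\mathcal X$ with Lebesgue measure. Alternatively, replace $T(\mathcal X)$ by the image under $T$ of the support of $\lambda$. With that in place, your proof is complete.
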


Definition~\ref{def:determining} is a Müntz--Szász-type condition. For $q=1$ and $S=\{s_1,s_2,\ldots\}$, the classical theorem of \cite{Muntz1914} characterizes density in terms of the divergence of $\sum_{n\in\mathbb N}s_n^{-1}$. Multivariate versions of this statement are available, though the picture is less complete \citep{Kroo1994GeometricMuntz,yang2014multivariate}. The condition is easy to verify in many cases of interest. For instance, if $S$ contains $\mathbb N_0^q$, as when $\mathcal X=\mathbb N_0^{q}$ and $T$ is the identity, then the span is the algebra of polynomials, which separates points in $\Omega$ and contains the constants, so the density property follows from the Stone--Weierstrass theorem. The same holds when $T(\mathcal X)=\mathbb R^{q}$. Moreover, notice that injectivity of $\eta$ on a compact set with non-empty interior in $\mathbb R^{d_2}$ requires $q\ge d_2$, so the proposition deals with the regime complementary to that of Proposition~\ref{prop:exp_family_pde}. 

The following families satisfy the hypotheses of Proposition~\ref{prop:exp_family_identif}:

\begin{enumerate}
    \item[(i)] \emph{Poisson kernel.} With $\mathcal X=\mathbb N_0$, $\lambda$ the counting measure, $f(x\mid\theta)=\theta^xe^{-\theta}/x!$ and $\Theta$ a compact subset of $(0,\infty)$, take $h(x)=1/x!$, $T(x)=x$, $\eta(\theta)=\log\theta$ and $A(\theta)=\theta$. Here $q=d_2=1$, $\eta$ is a homeomorphism onto its image, and $T(\mathcal X)=\mathbb N_0$ is determining for $\exp(\eta(\Theta))=\Theta$ by the previous remarks;

    \item[(ii)] \emph{Negative binomial kernel with fixed number of successes.} With $\mathcal X=\mathbb N_0$, $f(x\mid\theta)=\binom{x+r-1}{x}(1-\theta)^x\theta^r$ for fixed $r>0$ and $\Theta$ a compact subset of $(0,1)$, take $h(x)=\binom{x+r-1}{x}$, $T(x)=x$, $\eta(\theta)=\log(1-\theta)$ and $A(\theta)=-r\log\theta$, so that $T(\mathcal X)=\mathbb N$ is determining for $\exp(\eta(\Theta))=1-\Theta$ by the previous remarks;

    \item[(iii)] \emph{Multivariate Gaussian kernel with fixed covariance.} With $\mathcal X=\mathbb R^d$, $$f(x\mid\theta)=(2\pi)^{-d/2}|\Sigma|^{-1/2}\exp\left\{-\frac12(x-\theta)^\top\Sigma^{-1}(x-\theta)\right\}$$ for a fixed positive definite $\Sigma$ and $\Theta$ a compact subset of $\mathbb R^d$, take $\eta(\theta)=\theta$, $h(x)=\exp(-\frac12x^\top\Sigma^{-1}x) (2\pi)^{-d/2} |\Sigma|^{-1/2}$, $T(x)=\Sigma^{-1}x$, and $A(\theta)=\frac12\theta^\top\Sigma^{-1}\theta$. Here $q=d_2=d$ and $T(\mathcal X)=\mathbb R^d$ is obviously determining for $\exp(\eta(\Theta))=\exp(\Theta)$.
\end{enumerate}

\subsection{Beyond the determining set assumption}\label{sec:beyond}

The determining set condition of Proposition~\ref{prop:exp_family_identif} constrains the range of the sufficient statistic. We conclude by showing, in an example, that identifiability can hold without it, provided the sufficient statistics satisfy appropriate conditions. 
Specifically, let $\mathcal X=\mathbb R$, so $d_1=1$, and let $\Theta=[-a,a]\times[-b,b]\subset\mathbb R^2$ with $a,b>0$, so $d_2=2$. Consider the kernel
\begin{equation}\label{eq:super_exponential}
    f(x\mid\theta_1,\theta_2)=h(x)\exp\big\{\theta_1x+\theta_2e^{x^2}-A(\theta)\big\},\qquad h(x)=\exp\big(-2be^{x^2}\big),
\end{equation}
which we call a \emph{super-exponential kernel}. Its sufficient statistics are $T_1(x)=x$ and $T_2(x)=e^{x^2}$, and the choice of base measure guarantees integrability: since $\theta_2-2b\le-b<0$ on $\Theta$, the exponent $\theta_1x+(\theta_2-2b)e^{x^2}$ is bounded above by $\theta_1x-be^{x^2}$, so $A$ is finite, continuous and bounded on the compact $\Theta$.

\begin{proposition}\label{prop:super_exponential}
Let $f$ be the kernel \eqref{eq:super_exponential}. Then,
\begin{enumerate}
    \item[(a)] the only parameter differential operator of finite order, with continuous coefficients on $\Theta^\circ$, that annihilates $f$ is the trivial one, so that $f$ satisfies no non-trivial parameter PDE;
    \item[(b)] $\mathcal A$ is injective.
\end{enumerate}
\end{proposition}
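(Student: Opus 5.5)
For part (a) I would work with the log-derivatives of $f$. Since $f=h\exp\{\theta_1x+\theta_2e^{x^2}-A(\theta)\}$ with $A$ smooth on $\Theta^\circ$, induction on $|\alpha|$ gives
\[
\partial^\alpha_\theta f(x\mid\theta)=f(x\mid\theta)\,Q_\alpha\bigl(x,e^{x^2};\theta\bigr),
\]
where $Q_\alpha(s,t;\theta)$ is a polynomial in $(s,t)$. Its top homogeneous part is exactly $s^{\alpha_1}t^{\alpha_2}$, and all lower-order terms have coefficients that are smooth functions of $\theta$ built from derivatives of $A$. Suppose $\mathcal L_\theta f=0$ for a.e.\ $x$ and every $\theta\in\Theta^\circ$. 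Dividing by $f>0$ gives, for each fixed $\theta$,
\[
\sum_{|\alpha|\le r}c_\alpha(\theta)\,Q_\alpha\bigl(x,e^{x^2};\theta\bigr)=0 \quad\text{for a.e. } x.
\]
By continuity in $x$ this holds for all $x\in\mathbb R$. The key claim is that the functions $x\mapsto x^ie^{jx^2}$, $i,j\in\mathbb N_0$, are linearly independent on $\mathbb R$. To see this, suppose $\sum_{j}p_j(x)e^{jx^2}\equiv0$ with $p_j$ polynomials. Take the largest $j$ with $p_j\neq0$, divide by $e^{jx^2}$ and let $x\to\infty$: every lower term is $O(x^Ne^{-x^2})\to0$, so $p_j(x)\to0$, which forces $p_j\equiv0$. (Equivalently, $x$ and $e^{x^2}$ are algebraically independent.) Hence the polynomial $\sum_\alpha c_\alpha(\theta)Q_\alpha(s,t;\theta)$ vanishes identically in $(s,t)$. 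Now compare coefficients from the top degree down. The monomials $s^{\alpha_1}t^{\alpha_2}$ with $|\alpha|=r$ occur only through the leading parts, so $c_\alpha=0$ for $|\alpha|=r$. Descending induction then gives $c_\alpha\equiv0$ for every $\alpha$. This holds pointwise in $\theta$, so the operator is trivial; continuity of the coefficients is not even needed beyond well-definedness.

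For part (b) I would follow the strategy of Proposition~\ref{prop:exp_family_identif}, replacing the Stone--Weierstrass step with a more careful density argument. Suppose $\int f(x\mid\theta)\,G(d\theta)=0$ a.e.\ and define $\tilde G(d\theta):=e^{-A(\theta)}G(d\theta)$, a finite signed measure. Then
\[
\Phi(s,t):=\int e^{\theta_1 s+\theta_2 t}\,\tilde G(d\theta)=0 \quad\text{on the curve } \{(x,e^{x^2}):x\in\mathbb R\}.
\]
The function $\Phi$ is entire in $(s,t)\in\mathbb C^2$ and of exponential type at most $a$ in $s$ and $b$ in $t$. The aim is to conclude $\Phi\equiv0$, from which $\tilde G=0$ by uniqueness of Fourier--Laplace transforms of compactly supported measures, and therefore $G=0$.

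The main obstacle is the step from vanishing on a transcendental curve to vanishing identically; this fails for algebraic curves, which is exactly the situation of Proposition~\ref{prop:exp_family_pde}. I would first try a growth argument. Fix $x$ large and substitute $t=e^{x^2}$. For each fixed $s$, the function $t\mapsto\Phi(s,t)$ is entire of exponential type $\le b$ and bounded on $i\mathbb R$. Expand $\Phi(s,t)=\sum_k \phi_k(s)t^k/k!$ with $\phi_k(s)=\int\theta_2^k e^{\theta_1 s}\,\tilde G(d\theta)$, each of type $\le a$. Then use the fact that $e^{x^2}$ grows faster than any exponential in $x$: along the curve, the vanishing forces a cascade of asymptotic cancellations. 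Concretely, write $\Phi(x,e^{x^2})=\int e^{\theta_2 e^{x^2}}\,\mu_x(d\theta_2)$ with $\mu_x$ the $\theta_2$-marginal of $e^{\theta_1x}\tilde G$. Take the largest $\beta$ in the support of the $\theta_2$-marginal of $|\tilde G|$. By a Laplace-type asymptotic, the contribution near $\theta_2=\beta$ dominates with size roughly $e^{\beta e^{x^2}}$. This competes only against $e^{a|x|}$ factors, which are negligible on that scale, and that should show the mass concentrated at top $\theta_2$-levels must vanish, giving a contradiction. Making this peeling rigorous for measures that are not atomic in $\theta_2$ is the delicate part. If it resists, I would instead show that the span of $\{e^{\theta_1x+\theta_2e^{x^2}}\}$ is dense via a Hahn--Banach duality argument, using Müntz-type gap reasoning in the variable $t=e^{x^2}$ along dilations in $x$.
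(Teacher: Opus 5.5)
Your part (a) is correct and is essentially the paper's argument: the paper conjugates by $e^{A}$ and uses the same asymptotic linear independence of $x^ie^{jx^2}$. Your leading-homogeneous-part bookkeeping is a clean version of its descending induction.

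Part (b), however, is not a proof. The decisive step, passing from $\Phi(x,e^{x^2})=0$ for real $x$ to $\tilde G=0$, is only sketched, and the sketch does not work as stated. After factoring out $e^{\beta T}$ with $T=e^{x^2}$, you must show that $\int e^{-(\beta-\theta_2)T}e^{\theta_1x}\,\tilde G(d\theta)=0$ forces the top-level mass to vanish. Only the part of $\tilde G$ with $\theta_2\le\beta-\delta$ is negligible here. The weights $e^{\theta_1x}=e^{\theta_1\sqrt{\log T}}$ multiply the top-level contribution itself, so they are not negligible relative to it.

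Three cases show the problem. If $\tilde G$ charges the slice $\{\theta_2=\beta\}$, the remainder from $\beta-\delta<\theta_2<\beta$ is only bounded by $e^{a x}\varepsilon(T)$, with $\varepsilon(T)\to0$ at an uncontrolled rate. This can exceed the main term $\int e^{\theta_1x}\nu_\beta(d\theta_1)$, especially if $\nu_\beta$ lives below $\theta_1=a$. If $\tilde G$ charges no such slice (e.g.\ it has a density), the asymptotics depend on the fine behaviour of a \emph{signed} measure near $\theta_2=\beta$, where cancellations can occur. That behaviour is coupled to the $x$-dependent weights through $x=\sqrt{\log T}$, and no Laplace-type expansion is available in that generality. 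The Hahn--Banach/M\"untz fallback is too vague to assess.

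The missing idea is to leave the real axis rather than send $t=e^{x^2}\to\infty$. The map $z\mapsto\Phi(z,e^{z^2})$ is entire and vanishes on $\mathbb R$, hence on all of $\mathbb C$. On the imaginary axis $z=iy$ the second argument equals $e^{-y^2}$, which is small. Your own expansion $\Phi(s,t)=\sum_k\phi_k(s)t^k/k!$ then gives $\sum_k\phi_k(iy)e^{-ky^2}/k!=0$. Since $|\phi_k(iy)|\le b^k\|\tilde G\|_{\mathrm{TV}}$, this yields $|\phi_0(iy)|\le \|\tilde G\|_{\mathrm{TV}}(e^b-1)e^{-y^2}$.

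Now $\phi_0(iy)$ is the Fourier transform of the $\theta_1$-marginal of $\tilde G$, a measure supported on $[-a,a]$. Gaussian decay forces that marginal to vanish. The paper invokes the Levinson--Cartwright logarithmic-integral theorem; more elementarily, the inverse Fourier transform would be an entire function vanishing outside $[-a,a]$.

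Inducting on $k$ gives $|\phi_k(iy)|\le C_ke^{-y^2}$ once $\phi_0,\dots,\phi_{k-1}$ vanish, so every $\phi_k\equiv0$. Then all $\theta_1$-projections of $\theta_2^k\tilde G$ vanish, and $\tilde G=0$ by Stone--Weierstrass. In short, analytic continuation moves the problem to a region where $t$ is small, replacing your delicate peeling at $t\to\infty$ with a power-series expansion at $t\to0$.
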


Both statements follow from a mismatch in growth between the two sufficient statistics. 
Moreover, we emphasize that the construction is not specific to the statistic $e^{x^2}$: the same argument applies to kernels of the form $f(x\mid\theta_1,\theta_2)=h(x)\exp\{\theta_1x+\theta_2e^{P(x)}-A(\theta)\}$ for any polynomial $P$ of even degree with positive leading coefficient (with the base measure adjusted accordingly). 

\begin{remark}
    Proposition~\ref{prop:super_exponential} exhibits a kernel family for which (a) the absence of a parameter PDE and (b) identifiability of the mixing measure coexist. Whether the absence of these or similar differential structures is sufficient for identifiability in more general scenarios, and not just  necessary for it as discussed in Section~\ref{sec:pde_barriers}, remains an open question. 
\end{remark}

\section{Discussion}\label{sec:discussion}

We have identified differential structure in the parameterisation of the kernel
as a fundamental barrier to the identifiability of the mixing measure in
infinite mixture models. When the kernel is annihilated by a non-trivial
parameter differential operator, or by a difference-differential operator
involving shifted parameter values, distinct mixing measures induce the same
mixture density, and the mixing measure cannot be recovered in general. Such operators have been shown to exist for a wide range of standard
families, where over-parameterisation emerges as a
systematic route to non-identifiability. Complementing these barriers, we have
exhibited three classes of kernels for which the mixture operator is injective,
showing that the phenomenon is not universal and that identifiability can be
recovered by suitably restricting the parameterisation.

From our analysis, two main questions emerge as worthy of future
investigation. The first concerns the exact scope of the differential barrier
mechanism. Our results establish that the presence of a parameter PDE is
sufficient for non-identifiability, but not that its absence is sufficient for
identifiability. The super-exponential family of Section~\ref{sec:beyond} shows
that the two properties can indeed coexist: there, no non-trivial parameter PDE
exists and, at the same time, the mixture operator is injective. Whether the absence of
differential structure is sufficient for identifiability in general, or whether
further obstructions may arise, remains to be understood.

The second question concerns the implications of our results for statistical
estimation of the mixing measure. Proposition~\ref{pro:statistical_estimation}
shows that, when identifiability fails, no estimator can recover the mixing
measure in the worst case, so that our findings make the study of consistent and fully unconstrained nonparametric estimation hopeless. This worst-case analysis,
however, leaves open the question of which constraints on the model class or on
the true mixing measure may rule out the perturbations underlying our
constructions and restore consistent estimation. For instance, the recent
results of \cite{bariletto2026convergence}, which establish convergence rates
for the mixing measure in location-scale mixtures with a scale parameter shared
across components and learned from the data, may be read precisely as imposing one such constraint on the
model class and the ground truth, i.e., that the scale coordinate be held common. A more systematic understanding of which constraints, of this or other
kinds, suffice to restore identifiability would give a more nuanced picture of statistical estimation in infinite mixture models.

\clearpage
\begin{center}

{\bf{\LARGE{Supplementary Material}}}
\end{center}

\appendix

In this Supplementary Material, we collect the proofs of all the results presented in the main text of the article (Appendix~\ref{app:proofs_main}), and the statements and proofs of auxiliary results (Appendix~\ref{app:auxiliary}).

\section{Proofs of the results in the main text}
\label{app:proofs_main}

Throughout this appendix we use the notation of Sections~\ref{sec:notation}--\ref{sec:identifiable}.
For a parameter differential operator $\mathcal L_\theta=\sum_{|\alpha|\le r}c_\alpha(\theta)\partial^\alpha_\theta$
on an open set $U\subseteq\Theta^\circ$, recall its formal adjoint
$\mathcal L^*_\theta w=\sum_{|\alpha|\le r}(-1)^{|\alpha|}\partial^\alpha_\theta(c_\alpha w)$,
and the pairing $\langle w_1,w_2\rangle=\int_\Theta w_1(\theta)w_2(\theta)\,d\theta$.

\subsection{Proof of Theorem~\ref{thm:homogeneous}}
\label{app:proof:thm_homogeneous}


\emph{Step 1.}
For $u\in C_c^\infty(B)$ and $v\in C^r(U)$, integration by parts on $B$, with no
boundary contributions since $u$ and its derivatives vanish near $\partial B$, gives
\begin{equation*}
    \langle \mathcal L_\theta v,\,u\rangle
    =\sum_{|\alpha|\le r}\int_B c_\alpha(\theta)\big(\partial^\alpha_\theta v(\theta)\big)u(\theta)\,d\theta
    =\sum_{|\alpha|\le r}(-1)^{|\alpha|}\int_B v(\theta)\,\partial^\alpha_\theta\!\big(c_\alpha(\theta)u(\theta)\big)\,d\theta
    =\langle v,\,\mathcal L^*_\theta u\rangle,
\end{equation*}
where $\bar h:=\mathcal L^*_\theta u$ is continuous and compactly supported in $B$.

\vspace{0.5 em}
\noindent
\emph{Step 2.}
We first show that $\mathcal L_\theta u=0$ for all $u\in C_c^\infty(B)$ is impossible.
Since $\mathcal L_\theta$ is non-trivial on $B$, there is $\theta_*\in B$ and a multi-index
$\beta$ with $|\beta|\ge1$ and $c_\beta(\theta_*)\ne0$. Choose $\varphi\in C_c^\infty(B)$
equal to $1$ in a neighborhood of $\theta_*$ and set $u_\beta(\theta)=\frac{1}{\beta!}
\varphi(\theta)(\theta-\theta_*)^\beta$,
so that $\partial^\alpha_\theta u_\beta(\theta_*)=\mathbf 1\{\alpha=\beta\}$. Then,
$\mathcal L_\theta u_\beta(\theta_*)=\sum_{|\alpha|\le r}c_\alpha(\theta_*)\partial^\alpha_\theta u_\beta(\theta_*)=c_\beta(\theta_*)\ne0$. Consequently, if $\mathcal L^*_\theta u=0$ for every $u\in C_c^\infty(B)$, then
$\langle \mathcal L_\theta v,u\rangle=\langle v,\mathcal L^*_\theta u\rangle=0$ for all $u,v\in C_c^\infty(B)$,
forcing $\mathcal L_\theta v\equiv0$ on $B$ for all $v$, which was shown to be impossible. Hence there exists
$u\in C_c^\infty(B)$ with $\bar h:=\mathcal L^*_\theta u\not\equiv0$.

\vspace{0.5 em}
\noindent
\emph{Step 3.}
By Fubini's theorem and $\int_{\mathcal X}f(x\mid\theta)\lambda(dx)=1$,
\begin{equation*}
    \int_\Theta \bar h(\theta)\,d\theta
    =\int_{\mathcal X}\left(\int_\Theta \bar h(\theta)f(x\mid\theta)\,d\theta\right)\lambda(dx),
\end{equation*}
while the parameter PDE and the adjoint identity give, for $\lambda$-almost every $x$,
\begin{equation*}
    \int_\Theta f(x\mid\theta)\bar h(\theta)\,d\theta
    =\langle f(x\mid\cdot),\mathcal L^*_\theta u\rangle
    =\langle \mathcal L_\theta f(x\mid\cdot),u\rangle=0 .
\end{equation*}
Hence $\int_\Theta\bar h\,d\theta=0$ and $\int_\Theta f(x\mid\theta)\bar h(\theta)\,d\theta=0$ for $\lambda$-almost every $x$.

\vspace{0.5 em}
\noindent
\emph{Step 4.}
Let $M:=\sup_{\theta\in\Theta}|\bar h(\theta)|\in(0,\infty)$ and set $\varepsilon:=m/(2M)>0$.
Define $g:=g_*+\varepsilon\bar h$. On $B$ we have $g\ge m-\varepsilon M=m/2>0$, and on $\Theta\setminus B$
we have $g=g_*\ge0$, since $\bar h$ is supported in $B$; thus $g\ge0$ almost everywhere. Moreover
$\int_\Theta g\,d\theta=\int_\Theta g_*\,d\theta+\varepsilon\int_\Theta\bar h\,d\theta=1$, so $g$ is the
density of a probability measure $G\in\mathcal P(\Theta)$. Finally, for $\lambda$-almost every $x$,
\begin{equation*}
    p_G(x)-p_{G_*}(x)=\varepsilon\int_\Theta f(x\mid\theta)\bar h(\theta)\,d\theta=0.
\end{equation*}
At the same time, because $\bar h$ is non-zero at $\theta_*$ and continuous on $B$, it is non-zero on a set of positive Lebesgue measure. Therefore $G\ne G_*$, proving the claim.

\subsection{Proof of Proposition~\ref{prop:exp_family_pde}}
\label{app:proof:prop_exp_family_pde}

Write $f(x\mid\theta)=h(x)\exp\{\sum_{i=1}^q\eta_i(\theta)T_i(x)-A(\theta)\}$.
Differentiating in $\theta_j$, we obtain
\begin{equation*}
    \partial_{\theta_j}f(x\mid\theta)=f(x\mid\theta)\left(\sum_{i=1}^q\partial_{\theta_j}\eta_i(\theta)\,T_i(x)-\partial_{\theta_j}A(\theta)\right).
\end{equation*}
For a first-order operator $\mathcal L_\theta=\sum_{j=1}^{d_2}c_j(\theta)\partial_{\theta_j}+c_0(\theta)$, we get
\begin{equation*}
    \mathcal L_\theta f(x\mid\theta)
    =f(x\mid\theta)\left[\sum_{i=1}^q T_i(x)\sum_{j=1}^{d_2}c_j(\theta)\partial_{\theta_j}\eta_i(\theta)
    -\sum_{j=1}^{d_2}c_j(\theta)\partial_{\theta_j}A(\theta)+c_0(\theta)\right].
\end{equation*}
Since $f>0$, the operator annihilates $f$ on an open set $U\subseteq \Theta^\circ$ so long as
\begin{equation}
    \sum_{j=1}^{d_2}c_j(\theta)\partial_{\theta_j}\eta_i(\theta)=0\ \ (1\le i\le q)
    \qquad \text{and} \quad 
    c_0(\theta)=\sum_{j=1}^{d_2}c_j(\theta)\partial_{\theta_j}A(\theta)
    \label{eq:app_exp_1},
\end{equation}
for all $\theta\in U$. Below, we will demonstrate a choice of $U$ and of the functions $(c_0(\theta), c_1(\theta), \ldots, c_{d_2}(\theta))$ for each $\theta \in U$ satisfying the above relationship.

First, notice that by writing $J_\eta(\theta)$ for the $q\times d_2$ Jacobian with $(i,j)$-th entry $[J_\eta(\theta)]_{ij}=\partial_{\theta_j}\eta_i(\theta)$
and letting $\mathbf c(\theta)=(c_1(\theta),\dots,c_{d_2}(\theta))^\top$, the first set of equations is equivalent to
\begin{equation}\label{eq:JC}
J_\eta(\theta)\mathbf c(\theta)=0.    
\end{equation}

Let $r := \max_{\theta \in \Theta^\circ} \operatorname{rank}(J_\eta(\theta))$ 
and 
$R := \{\theta \in \Theta^\circ : \operatorname{rank}(J_\eta(\theta)) = r\}$.  By construction, for any $\theta' \in R$, there exists  some $r \times r$ submatrix $M(\theta')$ of $J_\eta(\theta')$
such that $\det M(\theta') \ne 0$. Without loss of generality, we can assume that $M(\theta')$ is the principal submatrix (i.e. it occupies the first $r$ rows and the first $r$ columns) of the Jacobian $J_{\eta}(\theta')$. Since the entries $\partial_{\theta_j}\eta_i$ are continuous and the
determinant is a multilinear polynomial in those entries (and therefore continuous), there exists a $\delta > 0$ such that  $U := B(\theta',\delta) \subset R$ and, for each $\theta$ in $U$,  $ \operatorname{rank}(J_\eta(\theta)) = \operatorname{rank}(M(\theta)) = r$, where $M(\theta)$ is the leading principal submatrix of $J_\eta(\theta)$ of dimension $r \times r$.

Write $\mathbf c_A(\theta) = (c_1(\theta), \dots, c_r(\theta))^\top$ and
$\mathbf c_B(\theta) = (c_{r+1}(\theta), \dots, c_{d_2}(\theta))^\top$. Because
$d_2 - r \ge d_2 - q > 0$,  $\mathbf c_B(\theta)$ contains at least one coordinate. Let $B(\theta)$
denote the $r \times (d_2 - r)$ submatrix of $J_\eta(\theta)$ adjacent to $M(\theta)$, and let $\mathbf b_1(\theta)$
be its first column.
We now set $\mathbf c_B(\theta)$ as the one-shot vector $(1, 0, \dots, 0)^\top$ for every $\theta \in U$ and show that a solution to \eqref{eq:JC} exists for every $\theta \in U$. Indeed, the identity $J_\eta(\theta)\mathbf c(\theta) = 0$ becomes
$$ M(\theta)\mathbf c_A(\theta) + B(\theta)\mathbf c_B(\theta) = 0, $$
which in turn is equivalent to
$$ M(\theta)\mathbf c_A(\theta) = -B(\theta)\mathbf c_B(\theta) = -\mathbf b_1(\theta). $$
Since $\det(M(\theta)) \ne 0$ on $U$, the above system has a unique set of solutions  $(c_j(\theta), 1 \le j \le r)$ for each $\theta \in U$, given by Cramer's rule as
$$ c_j(\theta) = \frac{\det(M_j(\theta))}{\det(M(\theta))}, $$
where $M_j(\theta)$ is formed by replacing the $j$-th column of $M(\theta)$ with the vector
$-\mathbf b_1(\theta)$. From this choice of $\mathbf c(\theta)$ it follows that
\begin{equation}
\label{eqn:prop_2_dependent_when_i_less_than_r}
    \sum_{j=1}^{d_2} c_j(\theta)\frac{\partial \eta_i(\theta)}{\partial \theta_j} = 0
    \qquad \text{for all } 1 \le i \le r.
\end{equation}
We now verify that equation \eqref{eqn:prop_2_dependent_when_i_less_than_r} holds also for $i\geq r$. Let $\mathbf r_1(\theta), \dots, \mathbf r_q(\theta)$ be the row vectors of $J_\eta(\theta)$. For $i > r$, since $\operatorname{rank}(J_\eta(\theta)) = r$ and $M(\theta)$ is invertible on $U$, for each $\theta \in U$
there exist real numbers $\lambda^i_1(\theta), \dots, \lambda^i_r(\theta)$ such that
\begin{equation*}
    \mathbf r_i(\theta) = \sum_{m=1}^{r} \lambda^i_m(\theta)\,\mathbf r_m(\theta).
\end{equation*}
As a result, for every $i > r$,
\begin{align}
\label{eqn:prop_2_dependent_when_i_larger_than_r}
    \sum_{j=1}^{d_2} c_j(\theta)\frac{\partial \eta_i(\theta)}{\partial \theta_j}
    = \langle \mathbf c(\theta), \mathbf r_i(\theta)\rangle
    = \sum_{m=1}^{r} \lambda^i_m(\theta)\,\langle \mathbf c(\theta), \mathbf r_m(\theta)\rangle = 0,
\end{align}
the final equality following from \eqref{eqn:prop_2_dependent_when_i_less_than_r}. Combining
\eqref{eqn:prop_2_dependent_when_i_less_than_r} and \eqref{eqn:prop_2_dependent_when_i_larger_than_r}, we
verify that \eqref{eq:JC} holds on $U$.

To complete the proof, it remains to show the second identity in \eqref{eq:app_exp_1}.
Towards that goal, we set, for each $\theta \in U$,
$$ c_0(\theta) := \sum_{j=1}^{d_2} c_j(\theta)\frac{\partial A(\theta)}{\partial \theta_j}, $$
where $(c_j(\theta), 1 \le j \le d_2)$ are the coefficient functions derived above. 
Since the entries of the Jacobian are infinitely continuously differentiable, both $\theta \in U \mapsto \det(M(\theta))$ and $\theta \in U \mapsto \det(M_j(\theta))$ are as well. As $\det(M(\theta))$ is bounded away from zero on $U$, we conclude  that 
$c_j \in C^\infty(U)$ for all $j \geq 1$. This, along with the fact that $A \in C^\infty(\Theta^\circ)$, yields that $c_0 \in C^\infty(U)$, too. This completes the proof.


\subsection{Proof of Proposition~\ref{prop:general_pde}}
\label{app:proof:prop_general_pde}

\emph{Step 1.}
We prove by induction on $m=|\alpha|$ that, for every multi-index $\alpha\in\mathbb N_0^{d_2}$,
\begin{equation}
    \partial^\alpha_\theta f(x\mid\theta)=Q_\alpha\big(S(x);\theta\big)\,f(x\mid\theta),
    \label{eq:app_gen_1}
\end{equation}
where $Q_\alpha(\,\cdot\,;\theta)$ is a polynomial in the $q$ variables $S_1(x),\dots,S_q(x)$ with
coefficients that
lie in $C^\infty(\Theta^\circ)$ and of total degree $\deg Q_\alpha\le p\,|\alpha|$.

For $|\alpha|=0$, \eqref{eq:app_gen_1} holds with $Q_0\equiv1$. For $|\alpha|=1$, say $\alpha=e_j$,
Definition~\ref{def:polynomial_score} gives $\partial_{\theta_j}f(x\mid\theta)=P_j\big(S(x);\theta\big)f(x\mid\theta)$
with $\deg P_j\le p$, so \eqref{eq:app_gen_1} holds with $Q_{e_j}=P_j$; moreover, by assumption the coefficients
of $P_j$ are in $C^\infty(\Theta^\circ)$.

Assume \eqref{eq:app_gen_1} holds for all multi-indices of order $m$, and take $\beta=\alpha+e_j$, for some $\alpha$ with $|\alpha| = m$ and $j$, so that 
$|\beta| = m+1$. Then
\begin{equation*}
    \partial^\beta_\theta f
    =\partial_{\theta_j}\big(\partial^\alpha_\theta f\big)
    =\partial_{\theta_j}\big[Q_\alpha\big(S(x);\theta\big)f(x\mid\theta)\big]
    =\Big(\partial_{\theta_j}Q_\alpha\Big)f+Q_\alpha\big(\partial_{\theta_j}f\big)
    =\big(\partial_{\theta_j}Q_\alpha+Q_\alpha P_j\big)f .
\end{equation*}
Define $Q_\beta:=\partial_{\theta_j}Q_\alpha+Q_\alpha P_j$. Then $Q_\beta$ is again a polynomial in
$S_1,\dots,S_q$ with $\theta$-dependent coefficients: differentiating $Q_\alpha$ in $\theta_j$ affects only its
coefficients, not its degree in $S$, so $\deg(\partial_{\theta_j}Q_\alpha)\le\deg Q_\alpha\le p\,m$, while
\begin{equation*}
    \deg(Q_\alpha P_j)=\deg Q_\alpha+\deg P_j\le p\,m+p=p\,(m+1),
\end{equation*}
whence $\deg Q_\beta\le p\,(m+1)=p\,|\beta|$. Moreover the coefficients of $Q_\beta$ are sums and products of
the infinitely continuously differentiable coefficients of $Q_\alpha$ and $P_j$ and of their $\theta$-derivatives, hence themselves
infinitely continuously differentiable. This completes the induction and establishes \eqref{eq:app_gen_1}.

\vspace{0.5 em}
\noindent
\emph{Step 2.}
Fix a maximal differential order $M\ge1$. The number of parameter derivatives $\partial^\alpha_\theta$ with
$|\alpha|\le M$ on a $d_2$-dimensional parameter space equals the number of multi-indices
$\alpha\in\mathbb N_0^{d_2}$ with $|\alpha|\le M$, namely
\begin{equation*}
    N_{\mathrm{der}}(M):=\binom{M+d_2}{d_2}=\frac{M^{d_2}}{d_2!}+\mathcal{O}\big(M^{d_2-1}\big)
    \qquad (M\to\infty).
\end{equation*}
By \eqref{eq:app_gen_1}, each such derivative, divided by $f$, is a polynomial in $S_1,\dots,S_q$ of total
degree at most $pM$; any linear combination $\sum_{|\alpha|\le M}c_\alpha(\theta)\partial^\alpha_\theta f$
therefore equals $f$ times a polynomial lying in the span of the monomials $S_1^{\gamma_1}\cdots S_q^{\gamma_q}$
with $\gamma_1+\cdots+\gamma_q\le pM$. The number of such monomials is at most
\begin{equation*}
    N_{\mathrm{pol}}(M):=\binom{pM+q}{q}=\frac{p^{\,q}M^{q}}{q!}+\mathcal{O}\big(M^{q-1}\big)
    \qquad (M\to\infty).
\end{equation*}
Since $q<d_2$,
\begin{equation*}
    \frac{N_{\mathrm{der}}(M)}{N_{\mathrm{pol}}(M)}
    =\frac{q!}{p^{\,q}\,d_2!}\,M^{\,d_2-q}\big(1+o(1)\big)\xrightarrow{M\to\infty}\infty,
\end{equation*}
so there exists an integer $M^*\ge1$ with $N_{\mathrm{der}}(M^*)>N_{\mathrm{pol}}(M^*)$. We fix such an $M^*$
for the remainder of the proof.

\vspace{0.5 em}
\noindent
\emph{Step 3.}
We seek coefficient functions $c_\alpha(\theta)$, indexed by $\alpha$ with $|\alpha|\le M^*$, not all zero,
such that
\begin{equation*}
    \mathcal L_\theta f(x\mid\theta)
    =\sum_{|\alpha|\le M^*}c_\alpha(\theta)\,\partial^\alpha_\theta f(x\mid\theta)=0
\end{equation*}
for $\lambda$-almost every $x\in\mathcal X$ and all $\theta$ in some open $U\subset \Theta^\circ$. By \eqref{eq:app_gen_1} and the strict positivity of $f$, this identity is equivalent to
\begin{equation}
    \sum_{|\alpha|\le M^*}c_\alpha(\theta)\,Q_\alpha\big(S(x);\theta\big)=0, \quad \theta \in U.
    \label{eq:app_gen_2}
\end{equation}
Let $m_1,\dots,m_{N_r}$ enumerate the distinct monomials in $S_1,\dots,S_q$, of total degree at most $pM^*$,
that appear across the polynomials $\{Q_\alpha:|\alpha|\le M^*\}$, so that $N_r\le N_{\mathrm{pol}}(M^*)$;
isolating these monomials as a basis removes any dependence coming from the sufficient-statistic map
$S:\mathcal X\to\mathbb R^q$. Writing $N_c:=N_{\mathrm{der}}(M^*)$ for the number of unknown coefficients, for any fixed $\theta$, 
define the matrix $\Phi(\theta)\in\mathbb R^{N_r\times N_c}$ whose $(i,\alpha)$ entry is the coefficient of the
monomial $m_i$ in $Q_\alpha(\,\cdot\,;\theta)$. Collecting the coefficients into
$\mathbf c(\theta)=(c_{\alpha_1}(\theta),\dots,c_{\alpha_{N_c}}(\theta))^\top$, condition \eqref{eq:app_gen_2}
is equivalent to
\begin{equation}
    \Phi(\theta)\,\mathbf c(\theta)=0 , \quad \theta \in U.
    \label{eq:app_gen_3}
\end{equation}
Below we will prove that the above system holds on an open subset $U$ of the parameter space. We will use similar arguments as in the proof of Theorem~\ref{thm:homogeneous}.

First, notice that, by the choice of $M^*$, we have $N_c>N_r$, so \eqref{eq:app_gen_3} is an underdetermined homogeneous system;
in particular $\operatorname{rank}\Phi(\theta)\le N_r$ for every $\theta$.
Let $\rho:=\max_{\theta\in\Theta^\circ}\operatorname{rank}\Phi(\theta)\le N_r$
and set $R:=\{\theta\in\Theta^\circ:\operatorname{rank}\Phi(\theta)=\rho\}$. If $\rho=0$ then $\Phi\equiv0$ on
$\Theta^\circ$ and \eqref{eq:app_gen_3} is solved by any non-zero constant $\mathbf c$; assume therefore
$\rho\ge1$. As $R$ is a non-empty subset of $\Theta$, let $\theta_0$ be any point in $R$. Then, there exists a $\rho \times \rho$ submatrix $M(\theta_0)$ with $\det M(\theta_0) \neq 0$. After relabeling the monomials (rows) and the multi-indices (columns), we may assume that $M(\theta_0)$ is a $\rho$-principal submatrix \footnote{Given a $m\times n$ matrix $M$, a $k$-principal submatrix of $M$ is the matrix occupying the first $k$ columns and first $k$ rows of $M$.} of $\Phi(\theta_0)$. As the entries
of $\Phi$ are $C^\infty$, there exists an open set $U\subseteq \Theta^\circ $ containing $\theta_0$ such that $\det M(\theta)\ne0$ on $U$, where $M(\theta)$ is the $\rho$-principal submatrix of $\Phi(\theta)$.

Let $B(\theta)$ be the $\rho\times(N_c-\rho)$   submatrix of $\Phi(\theta)$ adjacent to $M(\theta)$, and $\mathbf{b}(\theta)$ be its first column.
Split
$\mathbf c=(\mathbf c_A,\mathbf c_B)$ conformably, with
$\mathbf c_A=(c_{\alpha_1},\dots,c_{\alpha_{\rho}})^\top$ and
$\mathbf c_B=(c_{\alpha_{\rho+1}},\dots,c_{\alpha_{N_c}})^\top$; since $N_c-\rho\ge N_c-N_r\ge1$, 
$\mathbf c_B$ has at least one entry. 
Fix $\mathbf c_B(\theta):=(1,0,\dots,0)^\top$ for every $\theta\in U$. The top $\rho$ rows of \eqref{eq:app_gen_3}
then read
\[
 M(\theta)\mathbf c_A(\theta)+B(\theta)\mathbf c_B(\theta)=0,
\]
or, equivalently, 
\[
 M(\theta)\mathbf c_A(\theta)=-\mathbf b(\theta).
\]
Since $\det M(\theta)\ne0$ on $U$, this determines $\mathbf c_A(\theta)$ uniquely given the above choice of
$\mathbf c_B(\theta)$, by Cramer's rule,
\begin{equation*}
    c_{\alpha_j}(\theta)=\frac{\det M_{\alpha_j}(\theta)}{\det M(\theta)},\qquad 1\le j\le r,
\end{equation*}
where $M_{\alpha_j}(\theta)$ is obtained from $M(\theta)$ by replacing its $j$-th column with
$-\mathbf b(\theta)$. This choice annihilates the top $\rho$ rows of $\Phi(\theta)\mathbf c(\theta)$ by
construction. For the remaining rows, note that on $U$, since $\operatorname{rank}\Phi(\theta)=\rho$ and
$M(\theta)$ is invertible, each of bottom $N_r - \rho$ rows 
of $\Phi(\theta)$
is the same linear combination of the $\rho$
top rows, using similar argument as in equality \eqref{eqn:prop_2_dependent_when_i_larger_than_r}, the bottom $N_r - \rho$ entries of $\Phi(\theta)\mathbf{c}(\theta)$ are   similarly annihilated. Hence $\Phi(\theta)\mathbf c(\theta)=0$ on $U$, so \eqref{eq:app_gen_2}, and with it
$\mathcal L_\theta f(x\mid\theta)=0$, holds for every $x\in\mathcal X$ and $\theta\in U$.

Finally, since the entries of $\Phi$ are $C^\infty$ on $U$ and $\det M$ is bounded away from zero there, each
$c_{\alpha_j}$ is $C^\infty$ on $U$, while the entries of $\mathbf c_B$ are constant; thus all coefficients of
$\mathcal L_\theta$ lie in $C^\infty(U)$. The coefficient set to $1$ has multi-index $\alpha_{r+1}$,
thus the operator $\mathcal L_\theta$ is non-trivial on $U$ in the sense of
Definition~\ref{def:operator}. This establishes the proposition.\subsection{Proof of Theorem~\ref{thm:shift}}
\label{app:proof:thm_shift}
\textit{Step 1: Construction of measure discrepancy.} 
The first step is to construct a non-vanishing everywhere function $h$ such that 
\begin{equation*}
    \int_{\Theta} g(\eta)f(x\mid\eta)d\eta =0 \text{ for all } x \in \mathcal{X}. 
\end{equation*}
Consider the linear difference-differential operator $\mathcal{L}_{\theta}$ defined as 
\begin{equation*}
    \mathcal{L}_{\theta}[f(x\mid\cdot)] = \sum_{k=1}^{K}\sum_{|\alpha| \leq r_k} c_{k,\alpha}(\theta)\partial^{\alpha}_{\theta}f(x\mid\theta+v_k). 
\end{equation*}
Then, for any function $u \in C^{\infty}_{c}(\Theta)$ such that $\text{supp}(u) \subset B$, we have 
\begin{equation*}
    \int_{\Theta} u(\theta) \left(\sum_{k=1}^K \sum_{|\alpha|\leq r_k} c_{k,\alpha}(\theta)\partial^{\alpha}_{\theta}f(x\mid\theta+v_k)\right)d\theta = 0,
\end{equation*}
and thus as our function $\text{supp}(u) \subset B$, and as the summations are finite and the integrands are continuous on a compact set, we can write the above equation as 
\begin{align}
\sum_{k=1}^K \sum_{|\alpha| \le r_k} \int_B \Big( c_{k, \alpha}(\theta) u(\theta) \Big) \partial_\theta^\alpha f(x\mid\theta + v_k) d\theta = 0. \label{eq:dung_shift_PDE_2}
\end{align}
As $u \in C^{\infty}_c(\Theta)$ and $\mathrm{supp}(u) \subset B$, we have 
\begin{align*}
\int_B c_{k,\alpha}(\theta)u(\theta) \big[ \partial_\theta^\alpha f(x \mid \theta + v_k) \big] \, d\theta = (-1)^{|\alpha|} \int_B f(x\mid \theta + v_k) \big[ \partial_\theta^\alpha ( c_{k,\alpha}(\theta)u(\theta)) \big] \, d\theta.
\end{align*}
Plugging in this result into  \eqref{eq:dung_shift_PDE_2}, we have 
\begin{equation}
\label{eq:dung_shift_PDE_4}
    \sum_{k=1}^K \sum_{|\alpha| \le r_k} (-1)^{|\alpha|} \int_B f(x\mid \theta + v_k) \partial_\theta^\alpha \big[ c_{k, \alpha}(\theta) u(\theta) \big] \, d\theta = 0.
\end{equation}
By applying the change of variables $\eta = \theta + v_k$, we have
$$\int_B f(x \mid \theta + v_k) \partial_\theta^\alpha \big[ c_{k, \alpha}(\theta) u(\theta) \big] \, d\theta = \int_{B_k} f(x \mid \eta) \partial_\eta^\alpha \big[ c_{k, \alpha}(\eta - v_k) u(\eta - v_k) \big] \, d\eta$$
where $B_k := B + v_k$. Plugging this equation into equation~\eqref{eq:dung_shift_PDE_4} leads to
\begin{align}
\sum_{k=1}^K \sum_{|\alpha| \le r_k} (-1)^{|\alpha|} \int_{B_k} f(x\mid \eta) \partial_\eta^\alpha \big[ c_{k, \alpha}(\eta - v_k) u(\eta - v_k) \big] \, d\eta = 0. \label{eq:dung_shift_PDE_5}
\end{align}
By the properties of the bump function $u$, the multi-index partial derivatives $\partial_\eta^\alpha \big[ c_{k, \alpha}(\eta - v_k) u(\eta - v_k) \big]$ are 0 on the complement $\Theta \setminus B_k$. By understanding $u=0$ outside $\Theta$, we can rewrite equation~\eqref{eq:dung_shift_PDE_5} as
$$\int_\Theta \left( \sum_{k=1}^K \sum_{|\alpha| \le r_k} (-1)^{|\alpha|} \partial_\eta^\alpha \big[ c_{k, \alpha}(\eta - v_k) u(\eta - v_k) \big] \right) f(x\mid \eta) \, d\eta = 0.$$
We define a function $h: \Theta \to \mathbb{R}$ as 
$$h(\eta) := \sum_{k=1}^K \sum_{|\alpha| \le r_k} (-1)^{|\alpha|} \partial_\eta^\alpha \big[ c_{k, \alpha}(\eta - v_k) u(\eta - v_k) \big].$$

Next we will use the fact that the sets $B_j$'s are pairwise disjoint. Indeed, arguing by contradiction, if there exists a $\zeta\in B_i\cap B_j$ for some $i\ne j$, then $\|\zeta-\theta_0-v_i\|<\rho$ and $\|\zeta-\theta_0-v_j\|<\rho$,
so by using triangle inequality, $\|v_i-v_j\|<2\rho$, contradicting $2\rho<\min_{i\ne j}\|v_i-v_j\|$. 
Now, from the the disjointness of the sets $B_{k}$ for $1 \leq k \leq K$, we have
$$h(\eta) \Big|_{B_k} = h_k(\eta) = \sum_{|\alpha| \le r_k} (-1)^{|\alpha|} \partial_\eta^\alpha \big[ c_{k, \alpha}(\eta - v_k) u(\eta - v_k) \big],$$
which indicates that $h(\eta) = \sum_{k = 1}^{K} h_{k}(\eta)$. For any $1 \leq k \leq K$, we define linear partial differential operator $\mathcal{L}_{k}$ acting on a test function $u \in C_c^\infty(\Theta)$ as
$$(\mathcal{L}_{k}u)(\theta) = \sum_{|\alpha| \le r} c_{k,\alpha}(\theta) \partial_\theta^\alpha u(\theta).$$
Using the similar argument as the proof of Theorem~\ref{thm:homogeneous} in Section \ref{app:proof:thm_homogeneous}, the adjoint operator $\mathcal{L}_{k}^*$ takes the following form $$(\mathcal{L}_{k}^* u)(\theta) = \sum_{|\alpha| \le r} (-1)^{|\alpha|} \partial_\theta^\alpha \left( c_{k,\alpha}(\theta) u(\theta) \right).$$
Furthermore, $\mathcal{L}_{k}^* u \in C_c^\infty(\Theta)$ for any $u \in C_c^\infty(\Theta)$. Putting these equations together, we have
$$h_{k}(\eta) = \big(\mathcal{L}_k^* u\big)(\eta - v_k). $$
As the operator $\mathcal{L}_{\theta}$ is non-trivial, it indicates that the coefficient function $c_{k_*, \alpha}$ is non-zero for some multi-index $\alpha$ and $1 \leq j_{*} \leq J$. Therefore, the PDE operator $\mathcal{L}_{j_{*}}$ is non-trivial. Using the argument similar to that of Section~\ref{app:proof:thm_homogeneous}, there exists a test function $u \in C_c^\infty(B)$ such that $\mathcal{L}_{j_*}^* u \not \equiv 0$. Since $g(\eta) \Big|_{B_{k_{*}}} = g_{k_{*}}(\eta) = \big(\mathcal{L}_{k_{*}}^* u\big)(\eta - v_{k_{*}})$, it indicates that $g \not \equiv 0$. 

\textit{Step 2 - Construct probability measure $G$.}
By employing the same argument using Fubini's theorem as Step 3, Section \ref{app:proof:thm_homogeneous}, noting that $\int_{\Theta}h(\eta)f(x\mid\eta) = 0$, we have  
\begin{equation*}
    \int_{\Theta} h(\eta)d\eta = 0.
\end{equation*}
Let $\Omega_h := \text{supp}(h) \subset \bigcup_{k=1}^K B_k \subset \Theta$. Recall from the assumption that
$$ g_*(\eta) \ge m > 0 \quad \text{for all } \eta \in \Omega_g. $$
Because $h \in C_c^\infty(\Theta)$, we define $M = \|h\|_{L^\infty(\Theta)}$, which is finite. Since $h \not \equiv 0$, it indicates that $M > 0$. Let
$\varepsilon := \frac{m}{2M} > 0$, then, it is clear that $\epsilon \|h\|_{L^\infty(\Theta)} = m/2$. Now, we define
$$g(\theta) := g_{*}(\theta) + \epsilon \cdot h(\theta).$$
From here, with the similar argument as that Step 4, Section~\ref{app:proof:thm_homogeneous},
we have $g \in L^1(\Theta)$ and non-negative almost everywhere and integrates to $1$. Hence, it is a density function of a continuous probability measure $G \in \mathcal{M}(\Theta)$, and satisfies $p_G = p_{G_*}$ for $\lambda$-almost everywhere. In addition, as $$d_{\text{TV}}(G, G_*) = \frac{1}{2}\int_{\Theta}|g(\theta)-g_*(\theta)|\,d\theta = \frac{1}{2}\int_{\Theta}|h(\theta)|\,d\theta > 0$$
we achieve that $G \neq G_{*}$, and thus, $W_1(G,G_*) \geq c> 0$.

\subsection{Proof of Proposition~\ref{pro:statistical_estimation}}
\label{app:proof:prop_estimation}
Let $G_1\ne G_2$ be two measures in $\mathcal P(\Theta)$ satisfy $p_{G_1}=p_{G_2}$ for $\lambda$-almost everywhere, and set
$c:=W_1(G_1,G_2)>0$. Since finitely supported measures are $W_1$-dense in $\mathcal P(\Theta)$ when
$\Theta$ is compact, we can construct a pair of discrete sequences, $\{G_1^{(m)}\}_{m=1}^\infty \subset \mathcal{D}(\Theta)$ and $\{G_2^{(m)}\}_{m=1}^\infty \subset D(\Theta)$, such that 
$W_1(G_i^{(m)},G_i)\to 0$ for $1 \leq i \leq 2$. Thus, for there exists a positive integer $M_1$ such that for all $m\geq M_1$, we have $W_1(G_i^{(m)},G_i)< c/4$ for $m \geq M$.
An application of the triangle inequality yields
\begin{equation*}
    W_1(G_1^{(m)},G_2^{(m)})\geq W_1(G_1,G_2) -  W_1(G_1^{(m)},G_1) -W_1(G_2^{(m)},G) > c/2.
\end{equation*}

As the mapping $\theta\mapsto f(x\mid \theta)$ is continuous on the compact set $\Theta$ for $\lambda$-almost every $x$, it is also bounded for the $x$ values. Therefore, we appeal to Lemma~\ref{lem:mixture_continuity} and infer that $\|p_{G_i^{(m)}}-p_{G_i}\|_1\to0$. Using the fact that $p_{G_1}=p_{G_2}$ almost everywhere, we obtain that
\begin{equation*}
    \|p_{G_1^{(m)}}-p_{G_2^{(m)}}\|_1
    \le\|p_{G_1^{(m)}}-p_{G_1}\|_1+ \|p_{G_1} - p_{G_2}\|_1 + \|p_{G_2}-p_{G_2^{(m)}}\|_1\xrightarrow{m\to\infty}0 .
\end{equation*}
Thus, there exists a positive integer $M_2$ such that $\|p_{G_1^{(m)}}-p_{G_2^{(m)}}\|_1\le 1/(2n)$ for all $m \geq M_2$. By sub-additivity of the total variation distance over product
measures, we have
$$\|P^n_{G_1^{(m)}}-P^n_{G_2^{(m)}}\|_1\le n\|p_{G_1^{(m)}}-p_{G_2^{(m)}}\|_1\le 1/2.$$
Let $M = \max\{M_1,M_2\}$, Le Cam's
two-point inequality \citep[e.g.,][Lemma 15.9]{wainwright2019high} then gives, for any estimator $\hat G_n:\mathcal X^n\to\mathcal D(\Theta)$ and $m \geq M$, 
\begin{equation*}
    \sup_{G_*\in\mathcal D(\Theta)}\mathbb E_{X_{1:n}\sim P_{G_*}^n}\big[W_1(\widehat G_n,G_*)\big]
    \geq    \frac{1}{2}\,\frac{W_1\big(G_1^{(m)},G_2^{(m)}\big)}{2}\left(1-\frac12\|P^n_{G_1^{(m)}}-P^n_{G_2^{(m)}}\|_1\right)
    \geq \frac{3c}{32}.
\end{equation*}
As the bound is uniform in $n$, we arrive at
$$\inf_{n\in\mathbb N}\inf_{\hat G_n} \sup_{G_*\in\mathcal D(\Theta)}\mathbb E_{X_{1:n}\sim P_{G_*}^n}\big[W_1(\widehat G_n,G_*)\big] \ge\frac{3c}{32}.$$
As a consequence, we reach the conclusion of the proposition. 

\subsection{Proof of Proposition~\ref{prop:translation}}
\label{app:proof:prop_translation}

Let $G\in\mathcal M(\Theta)$ satisfy $\mathcal AG=0$ for $\lambda$-almost everywhere. Since $h>0$, we have that
\begin{equation*}
    \int_\Theta\gamma(\theta)K\big(\tau(x)-\psi(\theta)\big)\,G(d\theta)=0 \text{ for almost all } x.
\end{equation*}
Since the function $\gamma$ is assumed to be continuous
on the compact set $\Theta$, it is also bounded. Thus, $\tilde G(d\theta):=\gamma(\theta)\,G(d\theta)$ defines a finite signed measure. Let
$\mu:=\tilde G\circ \psi^{-1}$ the push-forward measure of $\mu:=\tilde G$   by $\psi$, by construction  supported on the compact set $\psi(\Theta)\subset\mathbb R^{d_1}$. Because $\tau$
maps $\mathcal X$ onto $\mathbb R^{d_1}$, the above identity can be rewritten as
\begin{equation*}
    \int_{\psi(\Theta)}K(y-s)\,\mu(ds)=0, \text{ for almost all } y\in\mathbb R^{d_1}.
\end{equation*}
Taking Fourier transforms and using the properties of convolutions, we obtain \begin{equation*}
    \mathcal F[K](\xi) \,\mathcal F[\mu](\xi) =0 \text{ for all } \xi \in \mathbb R^{d_1}
\end{equation*}
Because $\mathcal F[\mu]$ is
continuous and $\mathcal F[K]\ne0$ almost everywhere by hypothesis, we deduce that $\mathcal F[\mu]\equiv0$ and hence
$\mu=0$. Since $\psi$ is a homeomorphism onto its image, this implies $\tilde G=0$, and since $\gamma>0$, we conclude that $G=0$. This proves injectivity of $\mathcal A$, as claimed.

\subsection{Proof of Proposition~\ref{prop:exp_family_identif}}
\label{app:proof:prop_exp_family_identif}

Let $G\in\mathcal M(\Theta)$ satisfy $\mathcal AG=0$ for $\lambda$-almost everywhere. As $h>0$,
\begin{equation*}
    \int_\Theta\exp\big(\langle\eta(\theta),T(x)\rangle\big)e^{-A(\theta)}\,G(d\theta)=0,
    \qquad\lambda\text{-almost every\ }x .
\end{equation*}
Because $\Theta$ is compact and $A$ continuous, $e^{-A}$ is bounded away from $0$ and $\infty$, so
$\tilde G(d\theta):=e^{-A(\theta)}\,G(d\theta)$ is a finite signed measure with
$\tilde G(S)=0\Leftrightarrow G(S)=0$ for any measurable set $S\subseteq\Theta$. The map
$\Psi:=\exp\circ\,\eta:\Theta\to(0,\infty)^q$, applied coordinate-wise, is continuous and injective on the
compact set $\Theta$, hence a homeomorphism onto its image $\Omega:=\Psi(\Theta)=\exp\{\eta(\Theta)\}$, which
is therefore compact in $(0,\infty)^q$. Let $\mu:=\tilde G\circ \Psi^{-1}$, so that $\mu$ is a finite
signed measure on $\Omega$, and since $\Psi$ is a bijection between $\Theta$ and $\Omega$,
$\mu(S)=0\Leftrightarrow\tilde G(\Psi^{-1}(S))=0\Leftrightarrow G(\Psi^{-1}(S))=0$ for any $S\subseteq \Omega$ measurable. That is $\mu=0$ if and only if $\tilde G=0$.

Fix $x\in\mathcal X$ and set $s:=T(x)\in\mathbb R^q$.
For $\theta\in\Theta$, writing $y=\Psi(\theta)=(e^{\eta_1(\theta)},\dots,e^{\eta_q(\theta)})$, we have
\begin{equation*}
    \exp\big(\langle\eta(\theta),T(x)\rangle\big)
    =\exp\Big(\sum_{i=1}^q s_i\,\eta_i(\theta)\Big)
    =\prod_{i=1}^q\big(e^{\eta_i(\theta)}\big)^{s_i}
    =\prod_{i=1}^q y_i^{\,s_i}=y^{\,s}.
\end{equation*}
Therefore, changing variables in the last display equation, we obtain
\begin{equation}
    \int_\Omega y^{\,s}\,\mu(dy)=0,\qquad s\in T(\mathcal X).
    \label{eq:app_exp_moment}
\end{equation}
Notice that this holds for every $s\in T(\mathcal X)$ because the original
identity holds for $\lambda$-almost every $x$ and $T$ is measurable. Note that each monomial $y\mapsto y^{\,s}$
is continuous and bounded on the compact set $\Omega\subset(0,\infty)^q$, so the integrals in
\eqref{eq:app_exp_moment} are well defined and finite.

Since $T(\mathcal X)$ is determining for $\Omega$, the linear
span of $\{y\mapsto y^{\,s}:s\in T(\mathcal X)\}$ is dense in $C(\Omega)$ under the supremum norm. Given
$w\in C(\Omega)$ and $\varepsilon>0$, choose a finite linear combination
$w_\varepsilon=\sum_k a_k\,y^{\,s_k}$ with $s_k\in T(\mathcal X)$ and $\|w-w_\varepsilon\|_\infty<\varepsilon$.
By \eqref{eq:app_exp_moment}, $\int_\Omega w_\varepsilon\,d\mu=\sum_k a_k\int_\Omega y^{\,s_k}\,d\mu=0$, so
\begin{equation*}
    \Big|\int_\Omega w\,d\mu\Big|
    =\Big|\int_\Omega (w-w_\varepsilon)\,d\mu\Big|
    \le\|w-w_\varepsilon\|_\infty\,\|\mu\|_{\mathrm{TV}}
    <\varepsilon\,\|\mu\|_{\mathrm{TV}} .
\end{equation*}
Letting $\varepsilon\to0$ gives $\int_\Omega w\,d\mu=0$ for every $w\in C(\Omega)$. By the Riesz
representation theorem, the only finite signed Radon measure on the compact set $\Omega$ that integrates every
continuous function to zero is the zero measure, so $\mu=0$, and therefore $G=0$, proving the injectivity of $\mathcal A$.

\subsection{Proof of Proposition~\ref{prop:super_exponential}}
\label{app:proof:prop_super_exponential}

Recall the kernel $f(x\mid\theta)=h(x)\exp\{\theta_1x+\theta_2e^{x^2}-A(\theta)\}$ with base measure
$h(x)=\exp(-2be^{x^2})$ and parameter space $\Theta=[-a,a]\times[-b,b]$.

\medskip
\emph{(a) Non-existence of a non-trivial parameter PDE.}
Suppose, for contradiction, that some non-trivial parameter differential operator
$\mathcal L_\theta=\sum_{|\alpha|\le r}c_\alpha(\theta)\partial^\alpha_\theta$ with continuous coefficients on
$\Theta^\circ$ annihilates $f$; that is, there is a multi-index $\alpha^*$ with $|\alpha^*|=r$ and
$c_{\alpha^*}\not\equiv0$. Write
\begin{equation*}
    f(x\mid\theta)=h(x)\,e^{-A(\theta)}\,E(x\mid\theta),\qquad E(x\mid\theta):=\exp\big(\theta_1x+\theta_2e^{x^2}\big).
\end{equation*}
Since $h(x)>0$ is independent of $\theta$, the identity $\mathcal L_\theta f=0$ is equivalent to
$\mathcal L_\theta[e^{-A(\theta)}E(x\mid\theta)]=0$. Expanding the derivatives of the product
$e^{-A(\theta)}E(x\mid\theta)$ by the Leibniz rule yields an operator $\tilde{\mathcal L}_\theta$ acting on
$E$, i.e.
\begin{equation}
    \tilde{\mathcal L}_\theta E(x\mid\theta)
    :=e^{A(\theta)}\,\mathcal L_\theta\big[e^{-A(\theta)}E(x\mid\theta)\big]
    =\sum_{|\beta|\le r}\tilde c_\beta(\theta)\,\partial^\beta_\theta E(x\mid\theta)=0.
    \label{eq:app_super_a1}
\end{equation}
Note that the coefficients $\tilde c_\beta(\theta)$ are linear combinations of products of $c_\alpha(\theta)$ and higher order partial
derivatives of $A(\theta)$. For the top-order terms $|\beta|=r$, the Leibniz rule gives
$\tilde c_\beta(\theta)=c_\beta(\theta)$, so in particular $\tilde c_{\alpha^*}(\theta)=c_{\alpha^*}(\theta)
\not\equiv0$; moreover $\mathcal L_\theta\equiv0$ if and only if $\tilde{\mathcal L}_\theta\equiv0$, so
$\tilde{\mathcal L}_\theta$ is non-trivial whenever $\mathcal L_\theta$ is.

Since $E(x\mid\theta)=\exp(\theta_1x+\theta_2e^{x^2})$, differentiation gives
\begin{equation*}
    \partial^\beta_\theta E(x\mid\theta)
    =\frac{\partial^{|\beta|}}{\partial\theta_1^{\beta_1}\partial\theta_2^{\beta_2}}
    \exp\big(\theta_1x+\theta_2e^{x^2}\big)
    =x^{\beta_1}\big(e^{x^2}\big)^{\beta_2}E(x\mid\theta).
\end{equation*}
Substituting into \eqref{eq:app_super_a1} and using $E(x\mid\theta)>0$ for all $x\in\mathbb R$,
\begin{equation}
    \sum_{|\beta|\le r}\tilde c_\beta(\theta)\,x^{\beta_1}e^{\beta_2x^2}=0,
    \qquad x\in\mathbb R .
    \label{eq:app_super_a2}
\end{equation}
We now show that the above relationship forces $\tilde c_\beta(\theta)=0$ for every $|\beta|\le r$, contradicting
non-triviality. Indeed, for a non-negative integer $k \leq r$, grouping the terms in \eqref{eq:app_super_a2} by the value $k$  of the second index $\beta_2$ yields the polynomial in $x$ $P_k(x,\theta):=\sum_{\beta_1}\tilde c_{(\beta_1,k)}(\theta)\,x^{\beta_1}$. Thus, we can 
re-rewrite \eqref{eq:app_super_a2} as
\begin{equation}
    \sum_{k=0}^{M}P_k(x,\theta)\,e^{kx^2}=0,\qquad x\in\mathbb R,
    \label{eq:app_super_a3}
\end{equation}
where $M\le r$ is the largest index with $P_M(\cdot,\theta)\not\equiv0$ (such an $M$ exists precisely because
$\tilde{\mathcal L}_\theta$ is non-trivial). 
We will study the asymptotic behavior of the last identity as $x \to \infty$. Dividing both sides of \eqref{eq:app_super_a3} by $e^{Mx^2}$, we obtain
\begin{equation}
    P_M(x,\theta)+\sum_{k=0}^{M-1}P_k(x,\theta)\,e^{(k-M)x^2}=0 .
    \label{eq:app_super_a4}
\end{equation}
For each $k<M$ one has $k-M<0$, so $P_k(x,\theta)\,e^{(k-M)x^2}\to0$ as $x\to\infty$; letting $x\to\infty$ in
\eqref{eq:app_super_a4} therefore gives $\lim_{x\to\infty}P_M(x,\theta)=0$. But a polynomial in $x$ either is
identically zero or diverges to $\pm\infty$ or tends to a non-zero constant, so this limit is impossible
unless $P_M(\cdot,\theta)\equiv0$, contradicting the choice of $M$. Hence $P_k(\cdot,\theta)\equiv0$ for every
$k$, so $\tilde c_\beta(\theta)\equiv0$ for all $|\beta|\le r$. Since $\tilde c_\beta=c_\beta$ at the top order,
the highest-order coefficients of $\mathcal L_\theta$ vanish. By descending induction on $|\alpha|$, we calculate that 
$c_\alpha\equiv0$ for all $|\alpha|\le r$. Equivalently, $\mathcal L_\theta$ is trivial, a contradiction. Thus
$f$ satisfies no non-trivial homogeneous parameter PDE; the first claim of the Proposition is proved.

\medskip
\emph{(b) Identifiability.}
Suppose there exists a  $\mu\in\mathcal M(\Theta)$ satisfying
\begin{equation}
    \int_\Theta f(x\mid\theta)\,\mu(d\theta)
    =\int_\Theta h(x)\exp\big(\theta_1x+\theta_2e^{x^2}-A(\theta)\big)\,\mu(d\theta)=0,
    \qquad x\in\mathbb R .
    \label{eq:app_super_b0}
\end{equation}
We will show that this relationship implies that $\mu=0$ (thus proving identifiability) in four steps.

\emph{Step 1: Reduction and holomorphic extension.}
As $h(x)=\exp(-2be^{x^2})>0$, \eqref{eq:app_super_b0} is equivalent to
\begin{equation*}
\int_\Theta\exp(\theta_1x+\theta_2e^{x^2}-A(\theta))\,\mu(d\theta)=0, \text{ for all } x \in \mathbb{R}. 
\end{equation*}
Define the signed measure
$\tilde\mu$ by $\tilde\mu(d\theta):=e^{-A(\theta)}\,\mu(d\theta)$; since $\Theta$ is compact and $A$ is
continuous, $e^{-A}$ is bounded away from $0$ and $\infty$, so $\tilde\mu$ is finite and $\tilde\mu=0$ if and only if $\mu=0$. Thus, identity \eqref{eq:app_super_b0} becomes
\begin{equation}
    \int_\Theta\exp\big(\theta_1x+\theta_2e^{x^2}\big)\,\tilde\mu(d\theta)=0,
    \qquad x\in\mathbb R.
    \label{eq:app_super_b1}
\end{equation}
Let $F:\mathbb C^2\to\mathbb C$ be the Laplace transform of $\tilde\mu$, i.e. 
\begin{equation*}
  (w_1,w_2)\in \mathbb{C}^2 \mapsto  F(w_1,w_2):=\int_\Theta\exp\big(w_1\theta_1+w_2\theta_2\big)\,\tilde\mu(d\theta).
\end{equation*}
Because $\tilde\mu$ is finite with support in the compact box $[-a,a]\times[-b,b]$, the Paley--Wiener theorem
implies that $F$ is entire on $\mathbb C^2$ and satisfies
\begin{equation*}
    |F(w_1,w_2)|\le\|\tilde\mu\|_{\mathrm{TV}}\exp\big(a|\mathrm{Re}\,w_1|+b|\mathrm{Re}\,w_2|\big),
\end{equation*}
so $F$ is of exponential type (see Definition~\ref{def:exp_type} below).

\emph{Step 2: The diagonal slice and its Taylor expansion.}
Consider the entire function $M(z):=F(z,e^{z^2})$, a composition of entire functions. By
\eqref{eq:app_super_b1}, $M(x)=F(x,e^{x^2})=0$ for every $x\in\mathbb R$.
The real line is a subset of $\mathbb C$ with an accumulation point lying in $\mathbb C$,
so the identity theorem gives $M(z)=F(z,e^{z^2})=0$ for all $z\in\mathbb C$. Since $F$ is entire on
$\mathbb C^2$, it has a globally convergent Taylor expansion in its second argument,
\begin{equation*}
    F(w_1,w_2)=\sum_{n=0}^\infty c_n(w_1)\,w_2^{\,n},
    \qquad
    c_n(w_1)=\frac1{n!}\int_\Theta\theta_2^{\,n}\exp(w_1\theta_1)\,\tilde\mu(d\theta).
\end{equation*}

\emph{Step 3: Fast decay of the leading coefficient.}
By the Paley--Wiener theorem each $c_n$ is entire of exponential type. Evaluating the identity $M(z)=0$
through the expansion gives $\sum_{n\ge0}c_n(z)e^{nz^2}=0$ for all $z \in \mathbb{C}$, and substituting $z=iy$ with
$y\in\mathbb R$ yields that
\begin{equation}
    \sum_{n=0}^\infty c_n(iy)\,e^{-ny^2}=0
    \quad \text{or, eqiuivalently, } \quad
    c_0(iy)=-\sum_{n=1}^\infty c_n(iy)\,e^{-ny^2}.
    \label{eq:app_super_b3}
\end{equation}
From the integral formula for $c_n$, and using $|\tilde\mu|\le\|\tilde\mu\|_{\mathrm{TV}}$ and
$|\theta_2|\le b$,
\begin{equation*}
    |c_n(iy)|\le\frac1{n!}\int_\Theta|\theta_2|^{\,n}\,\big|e^{iy\theta_1}\big|\,|\tilde\mu|(d\theta)
    \le\frac{b^{\,n}}{n!}\,\|\tilde\mu\|_{\mathrm{TV}} .
\end{equation*}
Using these bounds and \eqref{eq:app_super_b3},
\begin{equation*}
    |c_0(iy)|\le\sum_{n=1}^\infty|c_n(iy)|\,e^{-ny^2}\le\|\tilde\mu\|_{\mathrm{TV}}\sum_{n=1}^\infty\frac{b^{\,n}}{n!}e^{-ny^2}=\|\tilde\mu\|_{\mathrm{TV}}\,e^{-y^2}\sum_{n=1}^\infty\frac{b^{\,n}}{n!}e^{-(n-1)y^2}.
\end{equation*}
Noting that $e^{-(n-1)y^2} \leq 1$, we have 
\begin{equation*}
    \sum_{n=1}^\infty\frac{b^{\,n}}{n!}e^{-(n-1)y^2} \leq \sum_{n=1}^{\infty} \frac{b^{\,n}}{n!} = e^b-1. 
\end{equation*}
As a result, with $C:=\|\tilde\mu\|_{\mathrm{TV}}(e^{b}-1)$ we obtain 
\[
|c_0(iy)|\le Ce^{-y^2},
\]
for all $y\in\mathbb R$.

\emph{Step 4: Logarithmic bound forces $c_0\equiv0$, then induction.}
Set $g(z):=c_0(iz)$, an entire function of exponential type with $|g(y)|\le Ce^{-y^2}$ on $\mathbb R$. Then,
$\log|g(y)|\le\log C-y^2$, leading to the following estimation for the logarithmic integral
\begin{equation*}
    \int_{-\infty}^\infty\frac{\log|g(y)|}{1+y^2}\,dy
    \le\int_{-\infty}^\infty\frac{\log C-y^2}{1+y^2}\,dy
    =\log C\int_{-\infty}^\infty\frac{dy}{1+y^2}-\int_{-\infty}^\infty\frac{y^2}{1+y^2}\,dy=-\infty,
\end{equation*}
the last integral diverging because $y^2/(1+y^2)\to1$. By Lemma~\ref{lem:log_bound}, an entire function of
exponential type whose logarithmic integral diverges to $-\infty$ must be identically equal to 0, hence $c_0\equiv g\equiv0$.

We now argue by induction. Suppose $c_k\equiv0$ for all $0\le k<m$. Then \eqref{eq:app_super_b3} reduces to
$\sum_{n\ge m}c_n(iy)e^{-ny^2}=0$, so
\begin{equation*}
    c_m(iy)\,e^{-my^2}=-\sum_{n=m+1}^\infty c_n(iy)\,e^{-ny^2},
\end{equation*}
and the same estimate as for $c_0$ gives
\begin{equation*}
    |c_m(iy)|\le\|\tilde\mu\|_{\mathrm{TV}}\,e^{-y^2}\sum_{k=1}^\infty\frac{b^{\,m+k}}{(m+k)!}\le C_me^{-y^2}
\end{equation*}
for a constant $C_m$. As above, we have 
\begin{equation*}
    \int_{-\infty}^\infty\log|c_m(iy)|/(1+y^2)\,dy=-\infty,
\end{equation*}
so Lemma~\ref{lem:log_bound} forces $c_m\equiv0$. By induction $c_n\equiv0$ for every $n\ge0$.

\emph{Step 5: Conclusion.}
We have proved that, for all $n\ge0$ and $y\in\mathbb R$,
\begin{equation*}
    c_n(iy)=\frac1{n!}\int_\Theta\theta_2^{\,n}\exp(iy\theta_1)\,\tilde\mu(d\theta)=0 .
\end{equation*}
Fix $n$ and define the finite signed measure $\nu_n$ on $[-a,a]$ by
$\nu_n(E):=\int_{E\times[-b,b]}\theta_2^{\,n}\,\tilde\mu(d\theta)$ for any measurable $E\subseteq[-a,a]$. The display
states that $\mathcal F[\nu_n](y)=0$ for all $y$, so by uniqueness of the Fourier transform, $\nu_n=0$.
Consequently, for every continuous $\phi$ on $[-a,a]$,
$\int_\Theta\phi(\theta_1)\,\theta_2^{\,n}\,\tilde\mu(d\theta)=0$, and therefore
$\int_\Theta \theta_1^{\,k}\theta_2^{\,n}\,\tilde\mu(d\theta)=0$ for all $k,n\ge0$. Hence,
$\int_\Theta P(\theta)\,\tilde\mu(d\theta)=0$ for every polynomial $P$. Since polynomials are dense in
$C(\Theta)$ by the Stone--Weierstrass theorem, $\int_\Theta w\,\tilde\mu(d\theta)=0$ for all $w\in C(\Theta)$,
which forces $\tilde\mu=0$ and thus $\mu=0$. 

\section{Auxiliary results and proofs}
\label{app:auxiliary}

Below, we collect some auxiliary results used in the proofs.
\begin{definition}\label{def:exp_type}
An entire function $g:\mathbb C\to\mathbb C$ is of \emph{exponential type} if there exist $C>0$ and
$\tau\ge0$ with $|g(z)|\le Ce^{\tau|z|}$ for all $z\in\mathbb C$.
\end{definition}

The following logarithmic-integral bound is the analytic device behind
Proposition~\ref{prop:super_exponential}(b): an entire function of exponential type that is bounded on the
real axis cannot have its log-modulus decay at a quadratic rate there unless it vanishes identically. This result is commonly referred to as Cartwright's version of Levinson's theorem

\begin{lemma}[after {\citealp[Section~III.G]{koosis1988logarithmic}}]\label{lem:log_bound}
If $g$ is entire of exponential type, bounded on $\mathbb R$, and $g\not\equiv0$, then
\begin{equation*}
    \int_{-\infty}^{\infty}\frac{\log|g(y)|}{1+y^2}\,dy>-\infty.
\end{equation*}
\end{lemma}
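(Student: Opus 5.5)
The proof will go through the upper half-plane: first show that $g$, suitably twisted, is a bounded analytic function there, and then obtain a lower bound on the logarithmic integral from Jensen's inequality, transported from the disc by the Cayley transform. Throughout, $g$ is entire with $|g(z)|\le Ce^{\tau|z|}$ and $\sup_{\mathbb R}|g|<\infty$.

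\emph{Normalisations.}
\begin{itemize}
\item Dividing $g$ by a positive constant changes the logarithmic integral only by a finite amount, so we may assume $\sup_{\mathbb R}|g|\le1$, i.e.\ $\log|g|\le0$ on $\mathbb R$.
\item If $g$ vanishes at $i$ to order $k$, replace $g$ by $g_1(z):=g(z)/(z-i)^k$. This function is still entire and of exponential type. It is bounded on $\mathbb R$, because $|t-i|\ge1$ there. Moreover $\log|g_1(t)|=\log|g(t)|-\tfrac k2\log(1+t^2)$, and $\int\log(1+t^2)/(1+t^2)\,dt<\infty$.
\end{itemize}
Hence finiteness of the logarithmic integral is unaffected, and we may also assume $g(i)\neq0$.

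\emph{Step 1: Phragmén--Lindelöf.} Set $G(z):=e^{i\tau z}g(z)$ on the closed upper half-plane $\overline{\mathbb H}$. On $\mathbb R$ we have $|G|=|g|\le1$, and on $\mathbb H$ we have $|G(z)|\le Ce^{-\tau\,\mathrm{Im} z+\tau|z|}\le Ce^{2\tau|z|}$.
\begin{itemize}
\item On the positive imaginary axis, $|G(iy)|\le Ce^{-\tau y+\tau y}=C$.
\item Apply the Phragmén--Lindelöf principle on each quarter-plane $\{0<\arg z<\pi/2\}$ and $\{\pi/2<\arg z<\pi\}$. These sectors have opening $\pi/2$, so growth of order $1$ is admissible. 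This gives $|G|\le\max(1,C)$ on $\overline{\mathbb H}$.
\item Now that $G$ is known to be bounded, apply Phragmén--Lindelöf once more on $\mathbb H$ itself to get $|G|\le1$ on $\overline{\mathbb H}$.
\end{itemize}
I expect this to be the main technical step, since the exponential-type bound has to be converted into genuine boundedness. The sector-splitting is what makes it work.

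\emph{Step 2: Jensen via the Cayley transform.} Let $\phi(w)=i(1+w)/(1-w)$, which maps the unit disc onto $\mathbb H$ with $\phi(0)=i$, and set $F:=G\circ\phi$.
\begin{itemize}
\item $F$ is analytic on the disc with $|F|\le1$, and $F(0)=G(i)=e^{-\tau}g(i)\neq0$.
\item $F$ extends continuously to the circle minus the point $1$, with $|F(e^{i\varphi})|=|g(t)|$ where $t=\phi(e^{i\varphi})\in\mathbb R$.
\item Jensen's inequality gives $\log|F(0)|\le\frac1{2\pi}\int_0^{2\pi}\log|F(re^{i\varphi})|\,d\varphi$ for every $r<1$.
\item The integrands are bounded above by $0$, so Fatou's lemma applied to their negatives, together with the pointwise convergence $F(re^{i\varphi})\to F(e^{i\varphi})$ for $\varphi\neq0$, yields
\[
-\infty<\log|F(0)|\le\frac1{2\pi}\int_0^{2\pi}\log|F(e^{i\varphi})|\,d\varphi .
\]
\end{itemize}

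\emph{Step 3: Change of variables.} Substitute $t=\phi(e^{i\varphi})$, for which $d\varphi/2\pi=dt/(\pi(1+t^2))$ is the Poisson kernel at $i$. The previous display becomes
\[
\log|G(i)|\le\frac1\pi\int_{-\infty}^{\infty}\frac{\log|g(t)|}{1+t^2}\,dt .
\]
The left-hand side is finite because $g(i)\neq0$, so the logarithmic integral is $>-\infty$. Undoing the normalisations, which only change the integral by finite amounts, proves the lemma.
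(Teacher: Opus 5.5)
Your proof is correct, and it follows a genuinely different route from the paper's. You first turn the hypotheses into boundedness in the upper half-plane. Phragm\'en--Lindel\"of on the two quarter-planes, where order-one growth is admissible, shows that $e^{i\tau z}g(z)$ is bounded on $\overline{\mathbb H}$. The lower bound then comes from Jensen's (sub-mean-value) inequality on the disc, pushed to the boundary by Fatou's lemma, which is legitimate since $\log|F|$ is bounded above. The Cayley transform carries this to $\mathbb R$, and its boundary Jacobian is exactly the Poisson kernel $1/(\pi(1+t^2))$ at $i$.

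The paper never establishes half-plane boundedness. It symmetrises to an even $G(z)=\eta(z)\eta(-z)$ with $G(0)=1$, $G'(0)=0$, to neutralise the pole of the Carleman kernel at the origin. It then checks local integrability of $\log|G|$ near real zeros and derives Carleman's formula on indented upper half-discs via Green's identity. Finally it discards the nonnegative contribution of the zeros, bounds the arc term by $4\log\bar C/R+8\tau$ using only the exponential-type estimate, and lets $R\to\infty$.

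What each route buys:
\begin{itemize}
\item Your argument is much shorter and handles real zeros of $g$ automatically through Fatou.
\item It also yields an explicit inequality: under your normalisations, $\frac1\pi\int_{-\infty}^{\infty}\frac{\log|g(t)|}{1+t^2}\,dt\ge\log|g(i)|-\tau$.
\item Its cost is that boundedness on $\mathbb R$ is used essentially, to run Phragm\'en--Lindel\"of.
\item The Carleman route uses boundedness on $\mathbb R$ only to control $\log^+|G|$. It therefore proves the stronger classical statement that, for functions of exponential type, $\int\log^+|g(x)|\,(1+x^2)^{-1}\,dx<\infty$ already forces $\int\bigl|\log|g(x)|\bigr|\,(1+x^2)^{-1}\,dx<\infty$.
\end{itemize}
For the application in Proposition~\ref{prop:super_exponential}(b), where $|g(y)|\le Ce^{-y^2}$ on $\mathbb R$, either proof suffices.

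Incidentally, your last Phragm\'en--Lindel\"of step, which improves the bound $\max(1,C)$ to $1$, is not needed. Fatou's lemma works with any finite upper bound on $\log|F|$.
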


\begin{proof}
\emph{Step 1: Reduction to an even function with a controlled germ at $0$.}
By Definition~\ref{def:exp_type} there are $C\ge1$, $\tau\ge0$ with $|g(z)|\le Ce^{\tau|z|}$, and $M\ge1$ with
$|g(x)|\le M$ on $\mathbb R$. Suppose that $g$ has a zero of order $k$ at the origin, we can write $g(z)=c\,z^k\eta(z)$ with
$\eta$ entire, $\eta(0)=1$, and $c \neq 0$. For $|z| \geq 1$, we have 
\begin{equation*}
    |\eta(z)| \leq \frac{|g(z)|}{|c||z|^k} \leq \frac{C}{|c|}e^{\tau|z|}.  
\end{equation*}
As $\eta$ is continuous, it is bounded on $|z|\leq 1$. Thus, $\eta$ retains the exponential type $\tau$. Similarly, $|\eta(x)| \le \frac{M}{|c|}$ for $|x| \ge 1$, and $\eta(x)$ is bounded on $[-1, 1]$. Thus, $\eta(x)$ is bounded on $\mathbb{R}$.

Now we construct $G(z):=\eta(z)\eta(-z)$, which is even, entire of exponential type $2\tau$, bounded on $\mathbb R$ by
some $B^2$, with $G(0)=1$, $G'(0)=0$, and $\log|G(z)|=O(|z|^2)$ near $0$. 
Then, as 
\begin{equation*}
    \log|g(z)| + \log|g(-z)| = 2\log(c) + 2k\log|z| + \log|G(z)|,
\end{equation*}
we achieve the following equality 
\begin{equation*}
    \int_{-\infty}^{\infty} \frac{\log|g(y)|}{1+y^2}\,dy = \int_{-\infty}^{\infty} \frac{\log|c| + k\log|y|}{1+y^2}\,dy + \frac{1}{2}\int_{-\infty}^{\infty}\frac{\log|G(y)|}{1+y^2}\,dy. 
\end{equation*}
Noting that function $\frac{\log |c| + k\log|y|}{1+y^2}$ is integrable in entire $\mathbb{R}$ for $c \neq 0$, our problem reduces to 
\begin{equation}
    \label{eqn:dung_levinson_theorem_reduced_form}
    \int_{-\infty}^{\infty}\frac{\log|G(y)|}{1+y^2}\,dy > -\infty
\end{equation}

\emph{Step 2: Local weighted logarithmic integrability.} We prove that for any positive $R$, we have 
\begin{equation}
\label{eqn:dung_lemma_1_near_zero_integrability}
    \int_{-R}^R |\log|G(x)||\left(\frac{1}{x^2}-\frac{1}{R^2}\right)\,dx < \infty
\end{equation}
Indeed, as $G$ is a non-zero entire function in $\mathbb{C}$, there are at most a finite number of zero of $G$ at $[-R,R]$, let us say $\{x_j\}_{1\leq j\leq k}$. Let $\delta > 0$ be a sufficient small real number such that the intervals $(x_j-\delta, x_j+\delta)$ and $(-\delta,\delta)$ are mutually disjoint. Then, the function $|G(x)|$ are positive in the compact set $\Omega := [-R,R] \setminus \bigcup_{j=1}^{k}(x_j-\delta, x_j+\delta)$, thus there exists a constant $C > 0$ such that $|\log|G(x)|| \leq C$, thus 
\begin{equation}
\label{eqn:dung_lemma_1_non_singularity_integral}
    \int_{\Omega} |\log|G(x)||\left(\frac{1}{x^2} +\frac{1}{R^2}\right)\,dx \leq C\int_{-R}^R\left(\frac{1}{\delta^2}+\frac{1}{R^2}\right)\,dx = \frac{2RC}{\delta^2} + \frac{2C}{R}. 
\end{equation}

Now we prove the integrability of $|\log|G(x)||\left(\frac{1}{x^2} - \frac{1}{R^2}\right)$ in the intervals $(-\delta,\delta)$ and each $(x_j-\delta,x_j+\delta)$. 
For interval $(-\delta,\delta)$, as $\log|G(x)| = O(|x|^2)$ and function $x^2(\frac{1}{x^2} + \frac{1}{R^2})$ is integrable in $(-\delta,\delta)$, we have 
\begin{equation}
\label{eqn:dung_lemma_1_zero_non_singularity_integral}
    \int_{-\delta}^{\delta}|\log|G(x)|| \left(\frac{1}{x^2} + \frac{1}{R^2}\right)\,dx < \infty. 
\end{equation}

For each interval $(x_j-\delta,x_j+\delta)$, suppose that $x_j$ a zero of $G$ with finite multiplicity $m_j \ge 1$. We can express $G(x) = (x-x_j)^{m_j} \kappa(x)$, where $\kappa \neq 0$ in $(x_j -\delta, x_j+\delta)$. Thus, we have
$$ \left| \frac{\log|G(x)|}{x^2} \right| \le \frac{m_j \big|\log|x-x_j|\big|}{x^2} + \frac{\big|\log|\kappa(x)|\big|}{x^2}.$$

Since $(x_j-\delta,x_j + \delta)$ and $(-\delta,\delta)$ are disjoint, we have $|x| > \delta$ for each $x \in (x_j-\delta,x_j + \delta)$.  The term $\big|\log|\eta(x)|\big|$ is bounded because $\kappa(x_j) \neq 0$. For the logarithm term, we have 
$$ \int_{x_j-\delta}^{x_j+\delta} \big|\log|x-x_j|\big| dx = \int_{-\delta}^\delta |\log|t|| dt = 2\delta(1 - \log\delta) < \infty.$$
Thus, we achieve the estimation 
\begin{equation}
\label{eqn:dung_lemma_1_non_zero_singularity_integral}
    \int_{x_j-\delta}^{x_j+\delta} |\log|G(x)|| \left(\frac{1}{x^2} + \frac{1}{R^2}\right)\,dx < \infty. 
\end{equation}
By combining the estimation from \eqref{eqn:dung_lemma_1_non_singularity_integral}, \eqref{eqn:dung_lemma_1_zero_non_singularity_integral} and \eqref{eqn:dung_lemma_1_non_zero_singularity_integral}, we achieve the estimation 
\begin{equation*}
    \int_{-R}^R |\log|G(x)||\left(\frac{1}{x^2}+\frac{1}{R^2}\right)\,dx < \infty. 
\end{equation*}
In particular, this estimation implies \eqref{eqn:dung_lemma_1_near_zero_integrability}. When $R = 1$, we similarly achieve that  
\begin{equation}
\label{eqn:dung_lemma_1_log_G_over_x_square}
    \int_{-1}^1 \frac{|\log|G(x)||}{x^2}\,dx < \infty. 
\end{equation}

\emph{Step 3: Carleman's formula.} In the rest of this section, we denote $u(z)=\log|G(z)|$ and
\begin{equation*}
    v(z)=\mathrm{Im}\left(-\frac{1}{z}-\frac{z}{R}\right) = \sin\theta\left(\frac{1}{r} - \frac{r}{R^2}\right) \text{ for } z=re^{i\theta},\, r\geq 0,\, \theta \in [0,2\pi).
\end{equation*}
In this step we give a demonstration for the result known as Carleman formula, which is an important part of our proof (for a systematic reference, see \cite{levin1980distribution})
\begin{equation}
\label{eqn:dung_carleman_formula}
    \int_{-R}^R \log|G(x)| \left(\frac{1}{x^2} - \frac{1}{R^2}\right) dx + \frac{2}{R} \int_0^\pi \log|G(Re^{i\theta})| \sin\theta \, d\theta = 2\pi \sum_{z_k \in \mathcal{Z}_{+}} m_k v(z_k),
\end{equation}
where $\mathcal{Z}_{+}:= \{z_k \in \mathbb{C} : \text{Im}(z_k) > 0, |z_k| < R, G(z_k) = 0\}$.
Let $D_R^+$ be the open upper half-disk $\{z = r e^{i\theta} \in \mathbb{C} : r < R, \theta \in (0, \pi)\}$. Let $D_{R,\epsilon}^+$ be $D_R^+$ minus small disks of radius $\epsilon$ around each zero $z_k$ of $G$, and minus small half-disks of radius $\epsilon$ around the origin and any real roots $x_k$, namely,
$$ D_{R,\epsilon}^+ = \left\{ z \in \mathbb{C} : |z| < R, \, \text{Im}(z) > 0 \right\} \setminus \left( \overline{B_\epsilon(0)} \cup \bigcup_{x_k \in \mathcal{Z}_{\mathbb{R}}} \overline{B_\epsilon(x_k)} \cup \bigcup_{z_k \in \mathcal{Z}_+} \overline{B_\epsilon(z_k)} \right)$$
where $\mathcal{Z}_{\mathbb{R}} = \{x_k \in \mathbb{R} : |x_k| <R,\ G(x_k) = 0\}$ be the set of real zeros of $G$ with absolute value less than $R$. As $\eta$ is a non-zero function, $G$ is a non-zero holomorphic function too, which implies that the number of zero point of $G$ in ${D_R^{+}}$ is finite, in other words, $|\mathcal{Z}_{\mathbb{R}}|, |\mathcal{Z}_{+}| < \infty$. Here, we denote $\overline{B_\epsilon(x)} = \left\{ z \in \mathbb{C} : |z - x| \le \epsilon \right\}$ for any $x \in \mathbb{C}$ and $\epsilon > 0$.

\textit{Step 3.1: Green identity.} By construction, all points where $G(z) = 0$, namely, the zeros $\mathcal{Z}_{\mathbb{R}}$ and $\mathcal{Z}_+$ have been removed along with an open neighborhood of radius $\epsilon$ surrounding them. Therefore, $G(z) \neq 0$ for all $z \in D_{R,\epsilon}^+$. Since $G(z)$ is a holomorphic function, $u(z) = \log|G(z)|$ is a real harmonic function on open set $D_{R,\epsilon}^+$. Thus, $u \in C^2(D^{+}_{R,\epsilon})$ and whose Laplacian exactly 0 in ${D_{R,\epsilon}^+}$
$$\Delta u = \frac{\partial^2 u}{\partial x^2} + \frac{\partial^2 u}{\partial y^2} = 0.$$

Regarding the function $v$, since $-\frac{1}{z} - \frac{z}{R^2}$ is a complex analytic function everywhere except at its lone pole at the origin $z = 0$ and we remove the origin and its surrounding half-disk $\overline{B_\epsilon(0)}$, the point $z = 0$ is not in $\overline{D_{R,\epsilon}^+}$. It indicates that $v \in C^2(\overline{D_{R,\epsilon}^+})$ and satisfies the Laplace's equation
$$\Delta v = \frac{\partial^2 v}{\partial x^2} + \frac{\partial^2 v}{\partial y^2} = 0.$$

Using Green's second identity, noting that $\Delta u = \Delta v = 0$ in $D^{+}_{R,\epsilon}$, we have 
$$0 = \int_{D_{R,\epsilon}^+} (u \Delta v - v \Delta u) \, dA = -\oint_{\partial D_{R,\epsilon}^+} \left( u \frac{\partial v}{\partial n} - v \frac{\partial u}{\partial n} \right) ds.$$
Since $\Delta u = 0$ and $\Delta v = 0$ inside $D_{R,\epsilon}^+$, this equation becomes
$$ \oint_{\partial D_{R,\epsilon}^+} \left( u \frac{\partial v}{\partial n} - v \frac{\partial u}{\partial n} \right) ds = 0.$$
Let $\Xi(s) := u(s)\frac{\partial v}{\partial n}(s) - v(s) \frac{\partial u}{\partial n}(s)$, we now evaluate the integral of $\Xi$ over the $\partial D_{R,\epsilon}^+$ boundary as $\epsilon \to 0$. From the construction of $D_{R,\epsilon}^+$, we have for $\epsilon$ sufficiently small,
\begin{align}
\partial D_{R,\epsilon}^+ = \Gamma_{\text{Real}} \cup S_R \cup S_\epsilon \cup \left( \bigcup_{x_k \in \mathcal{Z}_{\mathbb{R}}} \gamma_{x_k} \right) \cup \left( \bigcup_{z_k \in \mathcal{Z}_{+}} \gamma_{z_k} \right) \label{eq:logarithmic_bound_1}
\end{align}
where $\Gamma_{\text{Real}}$ consists of the remaining flat line segments left behind on $[-R, R]$ after slicing out small intervals of width $2\epsilon$ around the origin and any real roots $x_k$; $\gamma_{x_k}$ are the tiny clockwise semicircles of radius $\epsilon$ looping over the real roots $x_k$ into the upper half-plane to keep them outside our domain; $S_R$ is the counter-clockwise circular arc running from $(R, 0)$ to $(-R, 0)$ in the upper half-plane; $S_\epsilon$ is a small clockwise semicircle of radius $\epsilon$ centered at $z = 0$; $\gamma_{z_k}$ is a boundary of the closed disk of radius $\epsilon$ around $z_{k}$.

\textit{Step 3.2: Integral of $\Xi$ over $\Gamma_{\text{Real}}$. } We first evaluate $\lim_{\epsilon \to 0^+} \int_{\Gamma_{\text{Real}}} \Xi(s)\,ds$. From the definition of $\Gamma_{\text{Real}}$, direct calculation leads to
$$\int_{\Gamma_{\text{Real}}} \left( u \frac{\partial v}{\partial n} - v \frac{\partial u}{\partial n} \right) ds = \int_{-R}^R \phi(x) \chi_{I_{R,\epsilon}}(x) \, dx$$
where $\phi(x) = \log|G(x)| \left( \frac{1}{x^2} - \frac{1}{R^2} \right)$ and $\chi_{I_{R,\epsilon}}(x)$ is the characteristic indicator function of the set $I_{R,\epsilon}$, defined as:$$ I_{R,\epsilon} = [-R, R] \setminus \left( (-\epsilon, \epsilon) \cup \bigcup_{x_k \in \mathcal{Z}_{\mathbb{R}}} (x_k - \epsilon, x_k + \epsilon) \right).$$
It is clear that $\lim_{\epsilon \to 0^+} \chi_{I_{R,\epsilon}}(x) = 1$ for almost all $x \in [-R, R]$. Therefore, we have
\begin{equation*}
    \lim_{\epsilon \to 0^+} \phi(x) \chi_{I_{R,\epsilon}}(x) = \phi(x)
\end{equation*}
for almost surely $x \in [-R, R]$. Using the fact that $|\phi(x)|$ is integrable in $[-R,R]$ in Step 2 (see \eqref{eqn:dung_lemma_1_near_zero_integrability}), and putting these results together, an application of the dominated convergence theorem leads to
\begin{align}
\lim_{\epsilon \to 0^+} \int_{\Gamma_{\text{Real}}} \left( u \frac{\partial v}{\partial n} - v \frac{\partial u}{\partial n} \right) ds & = \lim_{\epsilon \to 0^+} \int_{-R}^R\phi(x) \chi_{I_{R,\epsilon}}(x) \, dx = \int_{-R}^R \left( \lim_{\epsilon \to 0^+} \phi(x) \chi_{I_{R,\epsilon}}(x) \right) dx \nonumber \\
& = \int_{-R}^R \log|G(x)| \left( \frac{1}{x^2} - \frac{1}{R^2} \right) dx. \label{eq:logarithmic_bound_2}
\end{align}

\textit{Step 3.3: Integral of $\Xi$ over $S_R$. } We now move to evaluate $\lim_{\epsilon \to 0^+} \int_{S_{R}} \Xi(s)\,ds$. Along the semicircle $S_{R}$, we have $v = 0$. Therefore, 
$$\lim_{\epsilon \to 0^+} \int_{S_{R}} \left( u \frac{\partial v}{\partial n} - v \frac{\partial u}{\partial n} \right) ds = \lim_{\epsilon \to 0^+} \int_{S_{R}} u \frac{\partial v}{\partial n} ds.$$
The semicircle $S_R$ is a 1-dimensional manifold parameterized smoothly by the map 
$$\gamma_R: [0, \pi] \to \mathbb{C}, \qquad \gamma_R(\theta) = R e^{i\theta} = (R\cos\theta, R\sin\theta).$$
To convert the line integral $\int_{S_R} u \frac{\partial v}{\partial n} \, ds$ into a standard Riemann integral over $[0, \pi]$, we determine the differential arc length element $ds$. The tangent vector is $\gamma_R'(\theta) = (-R\sin\theta, R\cos\theta)$. Therefore, we have $ds = \|\gamma_R'(\theta)\| \, d\theta = \sqrt{(-R\sin\theta)^2 + (R\cos\theta)^2} \, d\theta = R \, d\theta$. Direct computation yields that
$$\int_{S_R} u(z) \frac{\partial v}{\partial n}(z) \, ds = \int_0^\pi u(\gamma_R(\theta)) \cdot \frac{\partial v}{\partial n}(\gamma_R(\theta)) \cdot \|\gamma_R'(\theta)\| \, d\theta.$$
Plugging in $u(Re^{i\theta}) = \log|G(Re^{i\theta})|$, $\frac{\partial v}{\partial n}(\gamma_{R}(\theta)) = \frac{2}{R^2}\sin\theta$, and $ds = R \, d\theta$, we obtain that
$$\int_{S_R} u \frac{\partial v}{\partial n} \, ds = \int_0^\pi \log|G(Re^{i\theta})| \left( \frac{2}{R^2}\sin\theta \right) R \, d\theta = \frac{2}{R} \int_0^\pi \log|G(Re^{i\theta})| \sin\theta \, d\theta.$$
Putting these results together, we arrive at
\begin{align}
\lim_{\epsilon \to 0^+} \int_{S_{R}} \left( u \frac{\partial v}{\partial n} - v \frac{\partial u}{\partial n} \right) ds = \frac{2}{R} \int_0^\pi \log|G(Re^{i\theta})| \sin\theta \, d\theta. \label{eq:logarithmic_bound_3}
\end{align}

\textit{Step 3.4: Integral of $\Xi$ over $S_{\epsilon}$. } We now move to evaluate $\lim_{\epsilon \to 0^+} \int_{S_{\epsilon}} \Xi(s)\,ds$. The semicircle $S_{\epsilon}$ can be parameterized by the map 
$$\gamma_0: [0, \pi] \to \mathbb{C}, \qquad \gamma_0(\theta) = \epsilon e^{i(\pi - \theta)} = -\epsilon\cos\theta + i\epsilon\sin\theta.$$
The tangent vector of this path is $\gamma_0'(\theta) = \epsilon\sin\theta + i\epsilon\cos\theta$, which leads to $ds = \|\gamma_0'(\theta)\| \, d\theta = \sqrt{(\epsilon\sin\theta)^2 + (\epsilon\cos\theta)^2} \, d\theta = \epsilon \, d\theta$. Direct calculation yields that
$$\left| \int_{S_{\epsilon}} \left( u \frac{\partial v}{\partial n} - v \frac{\partial u}{\partial n} \right) ds\right| = \left| \int_0^\pi \left( u(\gamma_0(\theta))\frac{\partial v}{\partial n}(\gamma_0(\theta)) - v(\gamma_0(\theta))\frac{\partial u}{\partial n}(\gamma_0(\theta)) \right) \epsilon \, d\theta \right|.$$
By construction, $G(z)$ is an entire function with $G(0) = 1$ and $G'(0) = 0$. Its Taylor series expansion centered at the origin is:
$$G(z) = 1 + z^2 \psi(z)$$
where $\psi(z) = \sum_{n=2}^\infty a_n z^{n-2}$ is an entire function. Because $\psi(z)$ is continuous on  $\overline{B_1(0)}$, there exists $M_1 \ge 0$ such that for all $|z| = \epsilon \le 1$ we have
$$|G(z) - 1| \le \epsilon^2 M_1.$$
Invoking the inequality $|\log(1+x)| \le 2|x|$ for all $|x| \le \frac{1}{2}$, we choose $\epsilon \le \min\left(1, \frac{1}{\sqrt{2M_1}}\right)$ to ensure $|G(z)-1| \le \frac{1}{2}$. This allows us to bound $u = \log|G(z)|$ as 
$$|u(\gamma_0(\theta))| = \big|\text{Re}(\log G(z))\big| \le \big|\log G(z)\big| \le 2|G(z) - 1| \le 2M_1 \epsilon^2.$$
Furthermore, we can express $G'(z) = z \phi(z)$ where $\phi(z)$ is bounded by some $M_2 \ge 0$ on $\overline{B_1(0)}$. Then, we obtain that $$\left|\frac{\partial u}{\partial n}(\gamma_0(\theta))\right| = \left|\text{Re}\left( \frac{G'(z)}{G(z)} e^{i\theta} \right)\right| \le \frac{|G'(z)|}{|G(z)|} \le \frac{\epsilon M_2}{1 - \epsilon^2 M_1} \le 2M_2 \epsilon \qquad \text{for } \epsilon^2 M_1 \le \frac{1}{2}.$$
Similarly, since $|\sin\theta| \le 1$, for any $\epsilon < R$ we have
$$|v(\gamma_0(\theta))| \le \frac{1}{\epsilon} + \frac{\epsilon}{R^2} \le \frac{2}{\epsilon}, \quad \left|\frac{\partial v}{\partial n}(\gamma_0(\theta))\right| \le \frac{1}{\epsilon^2} + \frac{1}{R^2} \le \frac{2}{\epsilon^2}.$$
As $\epsilon$ is sufficiently small, putting these results together leads to
$$\left| \int_{S_{\epsilon}} \left( u \frac{\partial v}{\partial n} - v \frac{\partial u}{\partial n} \right) ds\right|  \le \int_0^\pi \left[ (2M_1 \epsilon^2) \cdot \left(\frac{2}{\epsilon^2}\right) + \left(\frac{2}{\epsilon}\right) \cdot (2M_2 \epsilon) \right] \epsilon \, d\theta \le \pi (4M_1 + 4M_2) \epsilon.$$
Therefore, we obtain that
\begin{align}
\lim_{\epsilon \to 0^+} \int_{S_{\epsilon}} \left( u \frac{\partial v}{\partial n} - v \frac{\partial u}{\partial n} \right) ds = 0. \label{eq:logarithmic_bound_4}
\end{align}
\emph{Step 3.5: Integral of $\Xi$ over $\gamma_{z_k}$. } We now move to $\lim_{\epsilon \to 0^+} \int_{\gamma_{z_k}} \Xi(s) ds$ for any $z_k \in \mathcal{Z}_{+}$. Indeed, the path $\gamma_{z_k}$ can be parameterized by
$$\gamma_{z_k}(\theta) = z_k + \epsilon e^{i(2\pi - \theta)} = z_k + \epsilon e^{-i\theta}, \qquad \theta \in [0, 2\pi].$$
Therefore, the differential arc length element is $ds = \|\gamma_{z_k}'(\theta)\|d\theta = \epsilon d\theta$. Assume that $z_k \in \mathcal{Z}_+$ be a zero of $G(z)$ with multiplicity $m_k \ge 1$. Then, we can factor $G(z) = (z - z_k)^{m_k} \psi(z)$, where $\psi(z)$ is analytic and satisfies $\psi(z_k) \neq 0$. Now, we expand $u(\gamma_{z_k}(\theta))$ near $z_{k}$ and obtain that
$$u(\gamma_{z_k}(\theta)) = \log|(\epsilon e^{-i\theta})^{m_k} \psi(z_k + \epsilon e^{-i\theta})| = m_k \log\epsilon + \log|\psi(z_k + \epsilon e^{-i\theta})| = m_k \log\epsilon + \log|\psi(z_k)| + O(\epsilon).$$
Similarly, we have
$$\frac{\partial u}{\partial n}(\gamma_{z_{k}}(\theta)) = \frac{m_k}{\epsilon} + O(1), \quad v(\gamma_{z_k}(\theta)) = v(z_k) + O(\epsilon), \quad \frac{\partial v}{\partial n}(\gamma_{z_{k}}(\theta)) = O(1).$$
Therefore, we obtain that
\begin{align*}
\int_{\gamma_{z_k}} \left( u \frac{\partial v}{\partial n} - v \frac{\partial u}{\partial n} \right) ds & = \int_0^{2\pi} \left[ \Big(m_k\log\epsilon + O(1)\Big)O(1) - \Big(v(z_k) + O(\epsilon)\Big)\left(\frac{m_k}{\epsilon} + O(1)\right) \right] \epsilon \, d\theta \\
& = \int_0^{2\pi} \left[ O(\epsilon \log\epsilon) - m_k v(z_k) - O(\epsilon) \right] d\theta. \label{eq:logarithmic_bound_5}
\end{align*}
As a consequence, we arrive at
\begin{align}
\lim_{\epsilon \to 0^+} \int_{\gamma_{z_k}} \left( u \frac{\partial v}{\partial n} - v \frac{\partial u}{\partial n} \right) ds = \int_0^{2\pi} -m_k v(z_k) \, d\theta = -2\pi m_k v(z_k). 
\end{align}
\emph{Step 3.6: Integral of $\Xi$ over $\gamma_{x_k}$.} We finally evaluate $\lim_{\epsilon \to 0^+} \int_{\gamma_{x_k}} \Xi(s) ds$ for any $x_k \in \mathcal{Z}_{\mathbb{R}}$. Indeed, we can parameterize $\gamma_{x_{k}}$ as 
$$\gamma_{x_k}(\theta) = x_k + \epsilon e^{-i\theta} \quad \text{for} \ \theta \in [\pi, 2\pi].$$ 
Direct calculation yields that
\begin{align*}
\int_{\gamma_{x_k}} \left( u \frac{\partial v}{\partial n} - v \frac{\partial u}{\partial n} \right) ds & = \int_\pi^{2\pi} \left[ \Big(m_k\log\epsilon + O(1)\Big)O(1) - O(\epsilon)\left(\frac{m_k}{\epsilon} + O(1)\right) \right] \epsilon \, d\theta \\
& = \int_\pi^{2\pi} \left[ O(\epsilon \log\epsilon) - O(\epsilon) \right] d\theta.
\end{align*}
Therefore, we arrive at
\begin{align}
\lim_{\epsilon \to 0^+} \int_{\gamma_{x_k}} \left( u \frac{\partial v}{\partial n} - v \frac{\partial u}{\partial n} \right) ds = 0. \label{eq:logarithmic_bound_6}
\end{align}
\emph{Step 3.7: Putting everything together. } Putting the results from equations~\eqref{eq:logarithmic_bound_1}-\eqref{eq:logarithmic_bound_6} together, we obtain that
$$\int_{-R}^R \log|G(x)| \left(\frac{1}{x^2} - \frac{1}{R^2}\right) dx + \frac{2}{R} \int_0^\pi \log|G(Re^{i\theta})| \sin\theta \, d\theta = 2\pi \sum_{z_k \in \mathcal{Z}_{+}} m_k v(z_k).$$

\textit{Step 4: Integrability of $|\log|G(x)||/x^2$ at infinity.} We now prove that 
\begin{equation*}
    \int_{|x|\geq 1} \frac{|\log|G(x)||}{x^2}\,dx < \infty. 
\end{equation*}
Since $v(z) > 0$ for all $z \in D_R^+$, Carleman formula in equation \eqref{eqn:dung_carleman_formula} leads to
\begin{equation}
\label{eqn:dung_lemma_1_LHS_carleman_greater_0}
    \int_{-R}^R \log|G(x)| \left(\frac{1}{x^2} - \frac{1}{R^2}\right) dx + \frac{2}{R} \int_0^\pi \log|G(Re^{i\theta})| \sin\theta \, d\theta \geq 0.
\end{equation}
Furthermore, since $|G(z)| \le \bar{C} e^{2\tau R}$ on $S_R$, we obtain that
\begin{equation}
\label{eqn:dung_lemma_1_integral_over_semicircle_less_than}
    \frac{2}{R} \int_0^\pi \log|G(Re^{i\theta})| \sin\theta \, d\theta \le \frac{2}{R} \int_0^\pi (\log \bar{C} + 2\tau R) \sin\theta \, d\theta = \frac{4\log \bar{C}}{R} + 8\tau.
\end{equation}
Collecting these bounds in \eqref{eqn:dung_lemma_1_LHS_carleman_greater_0} and \eqref{eqn:dung_lemma_1_integral_over_semicircle_less_than} leads to
\begin{equation*}
    \int_{-R}^R \log|G(x)| \left(\frac{1}{x^2} - \frac{1}{R^2}\right) dx \ge -4\frac{|\log \bar{C}|}{R} - 8\tau.
\end{equation*}
Since $\log|G(x)| = \log^+|G(x)| - \log^-|G(x)|$ where $\log^- t := \min\{0, \log t\}$ for any $t > 0$, this bound can be written as 
\begin{equation}
    \label{eqn:dung_lemma_1_lower_bound_for_log_G_depend_on_R}
\int_{-R}^R \log^-|G(x)| \left(\frac{1}{x^2} - \frac{1}{R^2}\right) dx \le \int_{-R}^R \log^+|G(x)| \left(\frac{1}{x^2} - \frac{1}{R^2}\right) dx + 4\frac{|\log \bar{C}|}{R} + 8\tau.
\end{equation}
When $1 \le |x| \le R$, from the hypothesis that $|G|$ is bounded in $\mathbb{R}$, there exists positive constant $M'$ not depending on $R$ such that $\log^+|G(x)| \le M'$. As a result, we obtain
\begin{equation}
    \label{eqn:dung_lemma_1_upper_bound_for_log_G_plus_depend_on_R}
    \int_{1 \le |x| \le R} \log^+|G(x)| \left(\frac{1}{x^2} - \frac{1}{R^2}\right) dx \le \int_{1 \le |x| \le R} \frac{M'}{x^2} dx = 2M'\left(1 - \frac{1}{R}\right) \le 2M'.
\end{equation}
Combining these bounds in \eqref{eqn:dung_lemma_1_lower_bound_for_log_G_depend_on_R} and \eqref{eqn:dung_lemma_1_upper_bound_for_log_G_plus_depend_on_R}, we arrive at
\begin{align}
\int_{1 \le |x| \le R} \log^-|G(x)| \left(\frac{1}{x^2} - \frac{1}{R^2}\right) dx \le 2M' + \int_{-1}^1 \log|G(x)| \left(\frac{1}{x^2} - \frac{1}{R^2}\right) dx +  4|\log \bar{C}| + 8\tau. \label{eq:logarithmic_bound_7}
\end{align}

Using the estimation in \eqref{eqn:dung_lemma_1_log_G_over_x_square}, we have 
$$\int_{-1}^1 \frac{|\log|G(x)||}{x^2} dx = W < \infty.$$
Therefore, we obtain that when $R > 1$,
\begin{align}
\int_{-1}^1 \log|G(x)| \left(\frac{1}{x^2} - \frac{1}{R^2}\right) dx \le 2\int_{-1}^1 \frac{|\log|G(x)||}{x^2} dx = 2W. \label{eq:logarithmic_bound_8}
\end{align}
Combining the results from equation~\eqref{eq:logarithmic_bound_7} and~\eqref{eq:logarithmic_bound_8} leads to
$$ \int_{1 \le |x| \le R} \log^-|G(x)| \left(\frac{1}{x^2} - \frac{1}{R^2}\right) dx \le 2M' + 2W + 4|\log \bar{C}| + 8\tau.$$
By taking $R \to \infty$ and using Monotone Convergence Theorem, this inequality eventually indicates that
$\int_{|x| \ge 1} \frac{\log^-|G|}{x^2} dx < \infty$. As $\int_{|x| \ge 1} \frac{\log^+|G|}{x^2} dx \leq 2M'$, these two results together leads to
$$\int_{|x| \ge 1} \frac{|\log|G(x)||}{x^2} dx < \infty.$$


\textit{Step 5: Conclusion. } Since $\frac{1}{1+x^2} < \frac{1}{x^2}$ and $\frac{\log|G(x)|}{x^2}$ is in $L^1([-1, 1])$,
we obtain that  
$$\int_{|x| \ge 1} \frac{|\log|G(x)||}{1 + x^2} dx < \int_{|x| \ge 1} \frac{|\log|G(x)||}{x^2} dx < \infty,$$
$$\int_{|x| \leq 1} \frac{|\log|G(x)||}{1 + x^2} dx < \int_{|x| \leq 1} \frac{|\log|G(x)||}{x^2} dx < \infty.$$
Consequently, we arrive at
$$ \int_{-\infty}^\infty \frac{|\log|G(x)||}{1+x^2} dx < \infty,$$
which indicates that \eqref{eqn:dung_levinson_theorem_reduced_form} is correct. We obtain the conclusion of the lemma.
\end{proof}

The next lemma supplies the continuity used in Appendix~\ref{app:proof:prop_estimation}. 

\begin{lemma}\label{lem:mixture_continuity}
Suppose that, for $\lambda$-almost every $x$, the map $\theta\mapsto f(x\mid\theta)$ is bounded and continuous on
$\Theta$. If $G_k,G\in\mathcal P(\Theta)$ and $W_1(G_k,G)\to0$, then $\|p_{G_k}-p_G\|_1\to0$.
\end{lemma}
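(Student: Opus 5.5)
The natural approach is to write the $L^1$ distance as an integral over $x$ of the absolute difference of two integrals against $G_k$ and $G$. We first use the fact that $W_1$ convergence on the compact set $\Theta$ is equivalent to weak convergence. Pointwise convergence then follows, and a Scheffé-type argument upgrades it to $L^1$ convergence. The positivity and unit mass of the mixture densities make Scheffé the right tool, since no integrable dominating function in $x$ is assumed.

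\emph{Step 1 (weak convergence).} Since $\Theta$ is compact, it is bounded. Hence $W_1(G_k,G)\to0$ is equivalent to $G_k\Rightarrow G$ weakly; by Kantorovich--Rubinstein duality, $W_1$ convergence at least implies convergence of integrals of Lipschitz functions, and this extends to all bounded continuous functions by the portmanteau theorem. So $\int_\Theta\phi\,dG_k\to\int_\Theta\phi\,dG$ for every $\phi\in C(\Theta)$.

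\emph{Step 2 (pointwise convergence).} Fix $x$ outside the $\lambda$-null exceptional set. By hypothesis, $\phi_x(\theta):=f(x\mid\theta)$ is bounded and continuous on $\Theta$. Step 1 then gives
\[
p_{G_k}(x)=\int_\Theta f(x\mid\theta)\,G_k(d\theta)\to\int_\Theta f(x\mid\theta)\,G(d\theta)=p_G(x).
\]
Thus $p_{G_k}\to p_G$ $\lambda$-almost everywhere.

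\emph{Step 3 (Scheffé).} By Fubini and $\int f(x\mid\theta)\lambda(dx)=1$, each $p_{G_k}$ and $p_G$ is a nonnegative function with $\lambda$-integral $1$. Consider $(p_G-p_{G_k})^+$. It is bounded by $p_G\in L^1$ and tends to $0$ almost everywhere, so by dominated convergence $\int(p_G-p_{G_k})^+d\lambda\to0$. Because both densities integrate to $1$, we have $\int(p_G-p_{G_k})\,d\lambda=0$. Hence
\[
\|p_{G_k}-p_G\|_1=2\int(p_G-p_{G_k})^+\,d\lambda\to0,
\]
which proves the lemma.

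The main potential obstacle is the absence of an integrable envelope $\sup_\theta f(x\mid\theta)$ in $x$, which would be needed to apply dominated convergence directly to $|p_{G_k}-p_G|$. Scheffé's lemma sidesteps this, using only the probability-density normalization already built into the definition of a kernel. The only other point to check is measurability of $p_{G_k}$, which follows from the joint measurability of $f$ via Fubini--Tonelli.
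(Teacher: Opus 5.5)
Your proposal is correct and follows essentially the same route as the paper's proof: $W_1$ convergence on the compact $\Theta$ gives weak convergence, this gives pointwise $\lambda$-a.e.\ convergence of $p_{G_k}$ to $p_G$ via the bounded continuous maps $\theta\mapsto f(x\mid\theta)$, and Scheff\'e's lemma upgrades this to $L^1$ convergence. You also spell out the Scheff\'e step through dominated convergence applied to $(p_G-p_{G_k})^+$, which the paper simply cites.
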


\begin{proof}
On the compact $\Theta$, $W_1$ metrises weak convergence, so $G_k\to G$ weakly. For $\lambda$-almost everywhere\ $x$,
$f(x\mid\cdot)$ is bounded and continuous, whence 
\begin{equation*}
    p_{G_k}(x)=\int_\Theta f(x\mid\theta)\,dG_k(\theta)\to
\int_\Theta f(x\mid\theta)\,dG(\theta)=p_G(x), \text{ a.e.}
\end{equation*}
Since each $p_{G_k}$ and $p_G$ is a probability density and $p_{G_k} \to p_G$ almost everywhere with respect to measure $\lambda$, Scheffé's lemma yields
$\|p_{G_k}-p_G\|_1\to0$. This completes the proof of Lemma \ref{lem:mixture_continuity}. 
\end{proof}

\bibliographystyle{apalike} 
\bibliography{references}

\end{document}